\documentclass[10pt]{article}
\usepackage{amsmath,amssymb,amsthm,amscd,esint}

\newtheorem{theorem}{Theorem}[section]
\newtheorem{lemma}[theorem]{Lemma}

\newtheorem{corollary}[theorem]{Corollary}
\newtheorem{proposition}[theorem]{Proposition}

\theoremstyle{definition}

\theoremstyle{remark}
\newtheorem{remark}[theorem]{Remark}

\numberwithin{equation}{section}
\numberwithin{theorem}{section}

\DeclareMathOperator{\tr}{tr} 
\DeclareMathOperator{\diam}{diam}

\DeclareMathOperator{\Ric}{Ric}

\begin{document}

\title{Finite Time Noncollapsing K\"ahler-Ricci Flow: Gaussian Type Heat Kernel, Gromov-Hausdorff Limits}

\author{Lei Zhang\thanks{ Email: leizizhang1992@163.com}\\
School of Mathematics and Statistics, Jiangsu Normal University\\
Zhenlei Zhang\thanks{Supported partially by NSFC 11431009 and NSFC 11771301. Email: zhleigo@aliyun.com}\\
School of Mathematical Sciences, Capital Normal University}
\date{}

\maketitle




\begin{abstract}
In this paper, we study heat kernel estimates along the K\"ahler-Ricci flow with finite time noncollapsing singularities over a compact K\"ahler manifold. The starting point is the type-II scalar-curvature estimate, which gives the natural parabolic scale $r\lesssim T-t$ near the singular time. By combining the distance distortion and space-time cut-off estimates of Bamler--Zhang with entropy, mean-value inequality, and conjugate heat-kernel estimates, we obtain two-sided Gaussian bounds and gradient bounds that are uniform on this scale. Under the global type-I scalar-curvature estimate, the same estimates hold on every scale.

We then use these estimates and the monotonicity of the $W_1$-distance between conjugate heat flows to construct a canonical singular-time pseudo-distance. The distances $d_t$ (induced by $\omega(t)$) converge uniformly to this pseudo-distance. Its metric quotient is the unique Gromov--Hausdorff limit. 

Finally, we apply Bamler's theory and the recent work of Hallgren--Zhang to obtain the following. Every fixed-base-point type-I blow-up admits a subsequential metric-flow limit whose time slices are singular shrinking K\"ahler--Ricci soliton spaces with normal complex-analytic and log-terminal structure, unique tangent cones, and a polarized Fano fibration. For moving base points, the limiting metric flow, equipped with its limiting conjugate heat flow, is a metric soliton precisely under an entropy-pinching condition. 

\end{abstract}

\tableofcontents



\section{Introduction}
Let $(X^n,\omega_0)$ be a compact K\"ahler manifold of complex dimension $n.$ The K\"ahler-Ricci flow on $X$ is defined by
\begin{equation}\label{KRF}
  \begin{cases}
  \frac{\partial\omega}{\partial t}&=-Ric(\omega)\\
  \omega(0)&=\omega_0
  \end{cases}
\end{equation}
The flow has a solution for short positive time \cite{Cao85} and the maximal existence time $T$ of (\ref{KRF}) is uniquely determined by a result of Tian-Zhang in \cite{TZh06}, namely,
$$T=\sup\{t>0:~[\omega_0]-2\pi tc_1(X)>0\}.$$
The K\"ahler class $[\omega(t)]$ at time $t$ is precisely given by $[\omega_0]-2\pi tc_1(X).$

In this paper, we focus on the finite time noncollapsing of K\"ahler-Ricci flow (\ref{KRF}). That is, $T<\infty,$ the cohomology class $[\beta]:=[\omega_0]-2\pi Tc_1(X)$ is a limit of K\"ahler classes which is nef and big (Indeed, it is itself no longer K\"ahler). The study of K\"ahler-Ricci flow (\ref{KRF}) on compact K\"ahler manifolds with finite time singularity has a long history and many spectacular results have appeared during the years (see \cite{Bam18,Cao85,SesTi08,So14,SSW13,SW13,ST07,ST12,ST17,STZ19,TZhZ16,TZhZ21,TZZZZ,TZh06,Tsu88,TosZh18,ZhZ10} and the references given therein). La Nave-Tian \cite{LT16} introduced a new continuity method which provides an alternative way in carrying out the Analytic Minimal Model Program proposed in \cite{LT16}. In the case of finite time continuity equation, the differential geometric properties of the limit have been studied in many papers, see \cite{LTZ17,Zha24,ZhaZh21,ZZ19,ZZ20} and the references therein.

Let us motivate the study in our case. Consider a Ricci flow $(M,g(t)_{t\in[0,T)})$ that develops a singularity at time $T<\infty.$ It is shown by Hamilton \cite{Ham82} that the maximum of the norm of the Riemannian curvature tensor diverges as $t\to T.$ Later, Sesum \cite{Ses05} showed that also the norm of the Ricci tensor has to become unbounded as $t\to T$ (see \cite{ZhZ14} for K\"ahler case). In the K\"ahler case, Zhang \cite{ZhZ10} showed that the scalar curvature must be unbounded as $t\to T,$ and that it is bounded from above by $\frac{C}{(T-t)^2}$ (a type-II bound) under an additional assumption on the limiting class. Jian-Song-Tian \cite{JST23} proved the local type-I scalar curvature bound near the Ricci vertex and they also proved that every solution of K\"ahler-Ricci flow on projective bundle over an $n$-dimensional K\"ahler-Einstein manifold with Calabi symmetry must develop type-I curvature singularities. The general scalar type-I estimate has now been proved by Jian--Song \cite{JS26} (see Theorem \ref{JSthm} in Section \ref{sec8}). It also implies the type-II bound \eqref{type2-scalar} after changing the constant. Thus the short-scale arguments below are conditional only on that weaker scalar bound, while their application to a general compact flow uses \cite{JS26}.

Another motivation for Gaussian type heat kernel estimates arises in the study of Ricci flow with bounded scalar curvature and its application to Fano K\"ahler-Ricci flow \cite{Bam18,Bam20,BamZh17,BamZh19,CW12,CW13,Wan12,TZZZZ,ZhQ12}. We will use a result obtained by Bamler in \cite{Bam20} which gives us upper bounds on heat kernels and its gradient for Ricci flows in terms of the pointed Nash entropy. Furthermore, combining our uniform $\nu$-entropy bound (Theorem \ref{ZhZh22}) with Gaussian type heat kernel bounds for Ricci flow with type-I scalar curvature bounds, we can provide an alternative proof of the Gaussian type bound and its gradient, which may be of independent interest and applications.

\subsection{Distance distortion, cut-off function}

The first main result of this paper is a local bound on the distortion of the distance function between two points. We remark that previously, distance distortion bounds have been obtained under certain growth conditions on the curvature tensor \cite{Ham93,Per02}, boundedness of parts of the Ricci curvature \cite{SiTop24,STZ19,TWan15,TZhZ21} and the bounded scalar curvature \cite{Bam18,Bam20,BamZh17,ZhQ12}.

Along a finite time solution of (\ref{KRF}) the scalar curvature is not bounded. On the scales $r\lesssim T-t$ this is precisely a bounded scalar curvature bound, and it is on these scales that we obtain the following results: distance distortion estimates (Theorem \ref{localdistortion}), a space-time cut-off function (Theorem \ref{cutoffftn}) and Gaussian type heat kernel bounds (Theorem \ref{hke1}). Under a type-I scalar curvature bound, the distance and cut-off estimates hold on the larger scales $r\lesssim\sqrt{T-t}$, and the heat kernel estimates hold for all $0\leq s<t<T$ (Theorem \ref{hke2}).

\begin{theorem}{\rm (Short-time distance distortion estimate)}\label{localdistortion}
Let $(X,\omega(t))$ be a K\"ahler-Ricci flow \eqref{KRF} defined on a compact K\"ahler manifold $(X,\omega_0)$ of complex dimension $n$ and $t\in[0,T),~T<\infty.$ Assume that the limiting class $[\beta]:=[\omega_0]-2\pi Tc_1(X)$ satisfies
$$\big([\omega_0]-2\pi Tc_1(X)\big)^n=\int_X\beta^n>0.$$ Then there are constants $0<\alpha<1,~c_0>0$ that only depend on $n,\omega_0$ such that the following holds:

Suppose that $t_0\in[0,T),~0<r_0\leq\min\{\sqrt{t_0},c_0(T-t_0)\},$ let $x_0,~y_0\in X$ be points with $d_{t_0}(x_0,y_0)\geq r_0$ and let $t\in[t_0-\alpha r_0^2,t_0+\alpha r_0^2].$ Then
$$\alpha d_{t_0}(x_0,y_0)<d_t(x_0,y_0)<\alpha^{-1}d_{t_0}(x_0,y_0).$$
If in addition the scalar curvature satisfies the type-I bound \eqref{typeIscalar} below, then the same conclusion holds for all $0<r_0\leq\min\{\sqrt{t_0},c_0\sqrt{T-t_0}\},$ with constants $\alpha,c_0$ depending on $C_0$ in {\rm Theorem \ref{hke2}}.
\end{theorem}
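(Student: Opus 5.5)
The plan is to reduce Theorem \ref{localdistortion} to the Bamler--Zhang distance distortion estimate for Ricci flows with bounded scalar curvature. The scale $r_0$ is exactly the one on which the type-II scalar bound becomes a bounded-curvature bound after parabolic rescaling.

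\textbf{Step 1: the scalar bound on the parabolic cylinder.} The type-II estimate \eqref{type2-scalar} gives $|R|(t)\le C/(T-t)^2$. (The lower bound $R\ge -C$ holds along \eqref{KRF} by the maximum principle.) For $t\in[t_0-\alpha r_0^2,t_0+\alpha r_0^2]$ and $r_0\le c_0(T-t_0)$ with $c_0$ small, we have $\alpha r_0^2\le \tfrac12(T-t_0)$, so $T-t\ge \tfrac12(T-t_0)$. Hence $|R|\le 4C(T-t_0)^{-2}\le 4Cc_0^2\, r_0^{-2}$ on the whole time interval $[t_0-r_0^2,t_0+\alpha r_0^2]\cap[0,T)$. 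The assumption $r_0\le\sqrt{t_0}$ guarantees that the backward interval of length $r_0^2$ lies inside $[0,t_0]$.

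\textbf{Step 2: rescaling and the noncollapsing input.} Rescale $\tilde g(s)=r_0^{-2}g(t_0+r_0^2 s)$ for $s\in[-1,\alpha]$. The rescaled flow has $|\tilde R|\le 4Cc_0^2\le 1$ after choosing $c_0$. Bamler--Zhang's distortion lemma also needs a uniform lower bound on Perelman's $\mu$ (or $\nu$) entropy at scales $\lesssim 1$ on compact time intervals. This I would take from the uniform $\nu$-entropy bound Theorem \ref{ZhZh22}. Its constant depends only on $n,\omega_0$, since $T<\infty$ and the volume $\int_X\beta^n>0$ stays bounded below. This is the only place the bigness of $[\beta]$ enters.

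\textbf{Step 3: applying Bamler--Zhang.} Their theorem, applied to the rescaled flow with bounded scalar curvature and entropy lower bound, yields $\alpha_0=\alpha_0(n,\text{entropy bound})$ such that $\tilde d_0(x_0,y_0)\ge 1$ implies
\[
\alpha_0\tilde d_0(x_0,y_0)<\tilde d_s(x_0,y_0)<\alpha_0^{-1}\tilde d_0(x_0,y_0), \qquad |s|\le\alpha_0 .
\]
Scaling back gives the statement with $\alpha=\alpha_0$.

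The main obstacle is the \emph{forward} direction $t>t_0$. Bamler--Zhang's argument uses the backward heat kernel / $L^2$ curvature integral on a cylinder around the time being compared, so it needs the curvature bound to persist slightly beyond $t_0+\alpha r_0^2$. This is exactly why $c_0(T-t_0)$ appears rather than merely $T-t_0$: shrinking $c_0$ leaves room before $T$. I would handle it by applying the backward version at the later time $t$ and checking the cylinder stays in the good region.

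\textbf{Step 4: the type-I case.} Under \eqref{typeIscalar}, $|R|\le C_0/(T-t)$. The same argument works with $r_0\le c_0\sqrt{T-t_0}$, because then $|R|\le 2C_0(T-t_0)^{-1}\le 2C_0c_0^2 r_0^{-2}$ on the cylinder. This gives constants depending on $C_0$.
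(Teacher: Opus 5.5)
Your Steps 1--4 are the paper's proof. The type-II bound \eqref{type2-scalar} with $c_0\le\min\{(2\sqrt{C_{II}})^{-1},(2T)^{-1/2}\}$ gives $R\le r_0^{-2}$ on $X\times[t_0-r_0^2,t_0+r_0^2]$, Bamler--Zhang's Theorem 1.1 then applies at $(x_0,t_0)$ and scale $r_0$, and the case $r_0\le c_0\sqrt{T-t_0}$ under \eqref{typeIscalar} is handled the same way.

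Two corrections. First, bigness of $[\beta]$ is not used at all, since the needed entropy bound $\nu[g_0,2T]>-\infty$ follows from $T<\infty$ alone. Second, the forward direction is no obstacle: Bamler--Zhang only require $R\le r_0^{-2}$ on $X\times[t_0,t]$, which your Step 1 already provides (in fact up to $t_0+r_0^2$). So there is no need to re-base at time $t$, which would in any case need a continuity argument, since $d_t(x_0,y_0)\ge r_0$ is not given.
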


On the parabolic neighborhood $X\times[t_0-\alpha r_0^2,t_0+\alpha r_0^2]$ this bound gives $R\leq r_0^{-2}$ whenever $r_0\leq c_0(T-t_0)$ with $c_0$ sufficiently small, and the results of Bamler-Zhang \cite{BamZh17} apply on such neighborhoods after a parabolic rescaling. Under a type-I bound the natural scale is $r_0\lesssim\sqrt{T-t_0}.$ The rescaling proofs use the scalar upper bound and the finite-scale entropy lower bound. We stress that the proofs of Theorems \rm{\ref{localdistortion}}, \rm{\ref{cutoffftn}} and \rm{\ref{hke1}} below only use the scalar curvature lower bound \rm{(\ref{scalarlb2})}, the bound \rm{(\ref{type2-scalar})} and the finiteness of $\nu[g_0,2T]$. In particular they hold for every finite time solution of \rm{(\ref{KRF})}, whereas the noncollapsing assumption $\int_X\beta^n>0$ supplies the diameter and volume bounds \eqref{diameterbd}--\eqref{noncollapse2} used in the later geometric arguments.

\begin{corollary}{\rm(Long-time distance distortion estimate)}\label{globaldistortion}
Under the assumption of {\rm Theorem \ref{localdistortion}}, let $c_0$ be the constant of {\rm Theorem \ref{localdistortion}}. Then there is a constant $A<\infty$ that depends only on $n,\omega_0$ such that the following holds:

Suppose that $t_0\in[0,T)$, let $x_0,~y_0\in X$ be points with $d_{t_0}(x_0,y_0)=r_0>0$. For any $t\in[r_0^2,T)$ with $A^2(r_0^2+|t-t_0|)\leq\min\{t_0,c_0^2(T-\max\{t_0,t\})^2\}$ we have
$$d_t(x_0,y_0)<A\sqrt{r_0^2+|t-t_0|}.$$
\end{corollary}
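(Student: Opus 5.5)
We plan to derive the corollary from Theorem \ref{localdistortion} by a standard chaining argument: either iterate the short-time bound in time at a fixed scale, or apply it once at a larger scale. The basic idea follows Bamler--Zhang's proof of the long-time distortion bound from the short-time one in the bounded scalar curvature setting. Set $r:=\sqrt{r_0^2+|t-t_0|}$ and aim to show $d_t(x_0,y_0)<A r$.

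\emph{Step 1 (apply the local theorem at scale comparable to $r$).} Choose the scale $r_1:=\alpha^{-1/2}r\cdot(\text{const})$ so that $|t-t_0|\leq\alpha r_1^2$ and $r_1\geq r_0$. The hypothesis $A^2 r^2\leq\min\{t_0,c_0^2(T-\max\{t_0,t\})^2\}$ with $A$ large gives $r_1\leq\min\{\sqrt{t_0},c_0(T-t_0)\}$. So the parabolic neighborhood of size $r_1$ around $t_0$ is admissible for Theorem \ref{localdistortion}.

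\emph{Step 2 (case split on $d_{t_0}$ versus $r_1$).} The theorem only controls pairs with $d_{t_0}(x_0,y_0)\geq r_1$, while here $d_{t_0}(x_0,y_0)=r_0\leq r_1$. We therefore do not use the pair $(x_0,y_0)$ directly. Instead we pick an auxiliary point $z\in X$ with $d_{t_0}(x_0,z)=r_1$ or slightly larger, say $2r_1$. Such a point exists since $\diam_{t_0}(X)$ is large compared with $r_1$ once $A$ is large; this follows from the volume noncollapsing and upper volume bounds \eqref{noncollapse2}, or, if the diameter is small, one argues directly. Then $d_{t_0}(y_0,z)\geq d_{t_0}(x_0,z)-r_0\geq r_1$ if we take $d_{t_0}(x_0,z)=2r_1$. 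Applying the theorem to both pairs $(x_0,z)$ and $(y_0,z)$ gives $d_t(x_0,z)<2\alpha^{-1}r_1$ and $d_t(y_0,z)<3\alpha^{-1}r_1$. The triangle inequality then yields $d_t(x_0,y_0)<5\alpha^{-1}r_1\leq A r$.

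\emph{Main obstacle.} The delicate point is the existence of $z$, i.e. ensuring the $d_{t_0}$-sphere of radius $2r_1$ about $x_0$ is nonempty. If instead $\diam_{t_0}X<2r_1$, we would choose $z$ to realize $d_{t_0}(x_0,z)\geq\frac12\diam_{t_0}X$. We would then run the same argument at the scale $r_1'=\min\{r_1,\frac14\diam_{t_0}X\}$, chaining in time in steps of length $\alpha r_1'^2$ if necessary. Alternatively, we could bound $\diam_t$ uniformly via \eqref{diameterbd}. A secondary check is the constraint $t\geq r_0^2$, together with $\sqrt{t_0}$, to keep all time intervals inside $[0,T)$. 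We also need $T-\max\{t_0,t\}$ (rather than $T-t_0$) in the hypothesis, so that the same admissibility holds if one recenters the theorem at $t$ instead of $t_0$ when $t>t_0$.
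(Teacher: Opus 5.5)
Your argument is correct, but it takes a genuinely different route from the paper's.

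\textbf{The paper's route.} The paper argues by contradiction using a first-exit time. If the bound fails at $t$, continuity gives a time $t_1$ between $t_0$ and $t$ with $d_{t_1}(x_0,y_0)=A\sqrt{r_0^2+|t_1-t_0|}=:2r_1$. At $t_1$ the pair $(x_0,y_0)$ itself is already separated at scale $r_1$, so Theorem \ref{localdistortion} can be centered at $(t_1,r_1)$ and applied to this pair. Its \emph{lower} bound, carried back to $t_0$, gives $r_0>\alpha A r_0$, a contradiction. That proof uses nothing beyond Theorem \ref{localdistortion}. The hypothesis involving $T-\max\{t_0,t\}$ is exactly what makes the recentered scale admissible at $t_1$.

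\textbf{Your route.} You center at $t_0$ and use only the \emph{upper} bound. You get around the requirement $d_{t_0}(x_0,y_0)\geq r_1$ with an auxiliary point $z$. The two applications plus the triangle inequality give $d_t(x_0,y_0)<5\alpha^{-1}r_1$, and this needs admissibility only at $t_0$. The price is a global input the paper avoids: a uniform lower bound on the $d_{t_0}$-radius about $x_0$. That input genuinely uses $\int_X\beta^n>0$. So your argument, unlike the paper's, does not cover the general finite-time case mentioned after Theorem \ref{localdistortion}.

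\textbf{Justifying the existence of $z$.} Here you should be precise, because \eqref{noncollapse2} is a \emph{lower} bound on ball volumes and cannot produce $z$. What works is the following combination.
\begin{itemize}
\item The total volume lower bound $|X|_{t_0}\geq v_0(n,\omega_0)>0$, from Step 1 of the proof of Theorem \ref{ZhZh22}.
\item Bamler's non-inflating bound $|B(x_0,t_0,\rho)|_{t_0}\leq C\rho^{2n}$, as in Lemma \ref{uppervb}. That lemma is independent of the corollary. Note $2r_1\leq\sqrt{t_0}$ once $A\geq2\alpha^{-1/2}$.
\end{itemize}
If no point had $d_{t_0}(x_0,\cdot)\geq 2r_1$, then $X=B(x_0,t_0,2r_1)$, so $v_0\leq C(2r_1)^{2n}$. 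This is impossible for $A$ large, since $r_1\leq\alpha^{-1/2}c_0T/A$. Connectedness of $X$ then gives $z$ with $d_{t_0}(x_0,z)=2r_1$ exactly.

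\textbf{The fallbacks should be dropped.} The alternatives in your last paragraph do not work.
\begin{itemize}
\item Chaining in time at a fixed smaller scale multiplies the factor $\alpha^{-1}$ once per step, so it cannot give a bound linear in $\sqrt{r_0^2+|t-t_0|}$.
\item \eqref{diameterbd} is an \emph{upper} diameter bound, so it cannot produce $z$.
\end{itemize}
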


\begin{theorem}{\rm(Cut-off function)}\label{cutoffftn}
Under the assumption of {\rm Theorem \ref{localdistortion}}, let $c_0$ be the constant of Theorem {\rm\ref{localdistortion}}. Then there is a constant $0<\rho<1$ that depends only on $n,\omega_0$ such that the following holds:

Let $(x_0,t_0)\in X\times[0,T)$ and $0<r_0\leq\min\{\sqrt{t_0},c_0(T-t_0)\}$ and let $0<\tau\leq\rho^2r^2_0.$ Then there is a function $\phi\in C^{\infty}(X\times[t_0-\tau,t_0])$ with the following properties:
\begin{itemize}
\item[(a)] $0\leq\phi<1$ everywhere;
\item[(b)] $\phi>\rho$ on $P(x_0,t_0,\rho r_0,-\tau);$
\item[(c)] $\phi=0$ on $(X\setminus B(x_0,t_0,r_0))\times[t_0-\tau,t_0];$
\item[(d)] $|\nabla\phi|<r_0^{-1}$ and $|\frac{\partial}{\partial t}\phi|+|\Delta\phi|<r^{-2}_0$ everywhere.
\end{itemize}
Under the type-I bound \eqref{typeIscalar} the same holds for all $$0<r_0\leq\min\{\sqrt{t_0},c_0\sqrt{T-t_0}\},$$
with $\rho$ depending on $c_0$ in {\rm Theorem \ref{localdistortion}}.
\end{theorem}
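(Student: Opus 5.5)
The plan is to reduce the statement to the cut-off construction of Bamler--Zhang \cite{BamZh17} for Ricci flows with bounded scalar curvature, using a parabolic rescaling. The key point is that on the chosen scale the scalar curvature is bounded by the rescaled scale.

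\textbf{Step 1: reduce to a bounded-curvature flow.} Fix $(x_0,t_0)$ and $r_0\le\min\{\sqrt{t_0},c_0(T-t_0)\}$. Consider the rescaled flow $\tilde g(s)=r_0^{-2}g(t_0+r_0^2 s)$ for $s\in[-\alpha,0]$; this is well defined since $r_0^2\le t_0$. By the type-II bound \eqref{type2-scalar} we have $R\le C(T-t)^{-2}$. For $t\in[t_0-\alpha r_0^2,t_0]$ we have $T-t\ge T-t_0\ge r_0/c_0$, so $R\le Cc_0^2r_0^{-2}$. Hence $|\tilde R|\le Cc_0^2\le1$ once $c_0$ is small, where the lower bound on $\tilde R$ comes from \eqref{scalarlb2}. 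The finiteness of $\nu[g_0,2T]$ and monotonicity give a uniform lower bound on $\nu[\tilde g(-\alpha),\alpha]$, independent of $r_0$ and $t_0$. Thus the rescaled flow satisfies exactly the hypotheses of the Bamler--Zhang setting on $[-\alpha,0]$: scalar curvature bounded by $1$ and entropy bounded below.

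\textbf{Step 2: construct the cut-off on the rescaled flow.} Build $\phi$ as in \cite{BamZh17}. Start from a distance-like function $\eta(x)=\tilde d_0(x_0,x)$ at time $0$, or better a smoothed version. Solve backward over the short time interval with a corrected heat-type evolution, or use the time-$s$ distance together with the distance distortion of Theorem \ref{localdistortion}. Then compose with a fixed smooth cut-off $f$ that is $1$ near $0$ and vanishes beyond $1$.

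The properties are checked as follows. Property (c) follows because, by distance distortion, points outside $B(x_0,t_0,r_0)$ remain far from $x_0$ at all times in the window. Property (b) follows because points of $P(x_0,t_0,\rho r_0,-\tau)$ remain within distance about $\alpha^{-1}\rho$ of $x_0$ in rescaled units. I expect to invoke the Bamler--Zhang cut-off theorem directly in rescaled form, i.e.\ for $|R|\le1$ on $[-1,0]$ with entropy bound. Their result provides (a)--(d) with constants depending only on the dimension and the entropy bound. Scaling back multiplies the gradient by $r_0^{-1}$ and the time derivative and Laplacian by $r_0^{-2}$, giving (d). The condition $\tau\le\rho^2r_0^2$ is ensured by choosing $\rho\le\sqrt\alpha$ and using their interval length.

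\textbf{Step 3: the type-I case and the main difficulty.} Under the type-I bound \eqref{typeIscalar}, $R\le C_0(T-t)^{-1}\le C_0c_0^{-2}\cdot c_0^2/(T-t_0)\cdot\ldots$; more precisely, $r_0\le c_0\sqrt{T-t_0}$ gives $R\le C_0c_0^2r_0^{-2}$ on the window. The same rescaling argument then applies with constants depending on $C_0$. The main obstacle is the strict bounds in (d) with constant exactly $1$, i.e.\ $|\nabla\phi|<r_0^{-1}$ and $|\partial_t\phi|+|\Delta\phi|<r_0^{-2}$. This requires the Laplacian bound on the smoothed distance function, which in \cite{BamZh17} comes from heat kernel and $L^\infty$ Ricci-integral estimates rather than pointwise Ricci bounds. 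I would absorb the resulting constant by replacing $\phi$ with $\varepsilon\phi$ and shrinking $\rho$ accordingly: (b) only requires $\phi>\rho$, and scaling by $\varepsilon$ preserves (a) and (c).
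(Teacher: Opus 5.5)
Your proposal is correct and follows essentially the paper's route: the type-II bound (resp.\ the type-I bound) together with the choice of $c_0$ gives $R\le r_0^{-2}$ on $X\times[t_0-r_0^2,t_0]\supset P(x_0,t_0,r_0,-\tau)$, and since $\nu[g_0,2T]>-\infty$ one applies Bamler--Zhang's cut-off theorem \cite[Theorem 1.3]{BamZh17} at $(x_0,t_0)$ and scale $r_0$. That theorem is already scale invariant and delivers (a)--(d) with exactly these normalizations, so the explicit rescaling, the construction sketch in your Step 2, and the $\varepsilon\phi$ adjustment are unnecessary, though harmless.
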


The proofs of Theorem \ref{localdistortion}, Corollary \ref{globaldistortion} and Theorem \ref{cutoffftn} can be found in Section \ref{sec3}. They are obtained from the corresponding results of Bamler-Zhang \cite{BamZh17} by a parabolic rescaling, using the type-II bound (\ref{type2-scalar}).

Let us now proceed to the second part of this paper.

\subsection{Heat kernel and its gradient}

We will derive Gaussian type heat kernel estimates (\ref{gthub}) for K\"ahler-Ricci flow (\ref{KRF}) which are similar to the results obtained by Bamler-Zhang \cite{BamZh17} under the assumption that the scalar curvature is (locally) bounded. We analyze the heat kernel $H(x,t;y,s)$ for the heat equation coupled with K\"ahler-Ricci flow and establish Gaussian type upper bounds and its gradient. Over the last decades, similar questions have been subject to active research, especially after Li-Yau's celebrated paper \cite{LiYau86}. Among numerous useful papers, let us mention \cite{Bam20,BamZh17,CaoNi08,CaoHam09,CaoZh11,ChHam97,Dav90,Gri97,Gri09,GriHu08,Per02,ZhQ06} and the references therein. Previous bounds usually relied on boundedness assumptions of the (scalar, Ricci, holomorphic bisectional) curvature and distance functions. Inspired by Bamler-Zhang \cite{BamZh17}, our proof makes use of an integral bound for heat kernel obtained by Hein-Naber (Theorem 1.13 and (1.20) of \cite{HNab14}) as well as our distance distortion bounds (Theorem \ref{localdistortion}), the cut-off function (Theorem \ref{cutoffftn}) and mean value inequalities for the conjugate heat equation (Section \ref{sec4}). Since Theorems \ref{localdistortion} and \ref{cutoffftn} are available on the scales $r_0\lesssim T-t_0$, the resulting heat kernel bounds are uniform in the range $\sqrt{t-s}\lesssim T-t.$ More precisely, we have the following. For more details see Sections \ref{sec5} and \ref{sec6}.

\begin{theorem}{\rm(Heat Kernel Estimates)}\label{hke1}
Under the assumption of {\rm Theorem \ref{localdistortion}}, there are constants $c_1>0$ and $C_1,~C_2<\infty$ that only depend on $n,\omega_0$ such that for all $0\leq s<t<T$ with
\begin{equation}\label{scalerange}
t-s\leq c_1(T-t)^2
\end{equation}
we have
\begin{align}\label{gthub}
\frac{1}{C_1(t-s)^n}\exp(-\frac{C_2d_s^2(x,y)}{t-s})\leq H(x,t;y,s)&\leq\frac{C_1}{(t-s)^n}\exp(-\frac{d_s^2(x,y)}{C_2(t-s)})\\ \label{tiduub}
|\nabla_xH(x,t;y,s)|_{\omega(t)}&\leq\frac{C_1}{(t-s)^{n+\frac{1}{2}}}\exp(-\frac{d_s^2(x,y)}{C_2(t-s)})\\ \label{gradientup1}
\frac{|\nabla_xH(x,t;y,s)|_{\omega(t)}}{H(x,t;y,s)}&\leq\frac{C_2}{(t-s)^{1/2}}\Big(1+\frac{d_s(x,y)}{(t-s)^{1/2}}\Big).
\end{align}
\end{theorem}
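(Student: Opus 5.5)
The strategy is to reduce everything to the bounded-scalar-curvature setting of Bamler--Zhang \cite{BamZh17} by parabolic rescaling, exactly as for Theorems \ref{localdistortion} and \ref{cutoffftn}. Fix $0\le s<t<T$ with $t-s\le c_1(T-t)^2$ and set $r^2:=t-s$. On $[s,t]$ the type-II bound \eqref{type2-scalar} gives $R\le C(T-t)^{-2}\le Cc_1 r^{-2}$. Together with the lower bound \eqref{scalarlb2}, this means $|R|\le r^{-2}$ after rescaling by $r^{-2}$, once $c_1$ is small. The finiteness of $\nu[g_0,2T]$ supplies a uniform lower bound on the $\mu$-entropy at all scales $\le 2T$, hence a uniform noncollapsing constant. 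When $s$ is small the condition $r_0\le\sqrt{t_0}$ may fail. In that case I would use the smoothness of the flow near $t=0$: curvature is bounded on $[0,\min(T/2,\epsilon)]$, so classical estimates apply there, and the scales can be patched.

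\textbf{Upper bound.} First I derive the on-diagonal bound $H(x,t;y,s)\le C(t-s)^{-n}$. Bamler's bound \cite{Bam20} gives $H\le C(t-s)^{-n}\exp(-\mathcal N_{x,t}(t-s))$, and the pointed Nash entropy is bounded below by $\nu[g_0,2T]$. Next I obtain the Gaussian factor. Following \cite{BamZh17}, I use the Hein--Naber integral Gaussian concentration bound for the conjugate heat kernel, $\int_{X\setminus B(y,s,D)}H(x,t;\cdot,s)\,dV_s\le C e^{-D^2/C(t-s)}$, in the $x$-variable from a fixed point. This is converted to a pointwise bound via the mean value inequality for (conjugate) heat equations from Section \ref{sec4}, applied on parabolic balls of size $\rho r$. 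The mean value inequality is built from the cut-off functions of Theorem \ref{cutoffftn} with $r_0\sim r$, which is admissible since $r\lesssim T-t$.

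The distance distortion of Theorem \ref{localdistortion} and Corollary \ref{globaldistortion} lets me replace $d_t$ or intermediate distances by $d_s$ at the cost of constants. Going from the ball scale $\rho r$ to the Gaussian scale $d_s(x,y)$ may need a chaining argument. The semigroup property over the intermediate time $(s+t)/2$ handles the unbalanced case $d_s(x,y)\gg r$.

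\textbf{Gradient bounds.} The bound \eqref{tiduub} comes from Bamler's gradient estimate \cite{Bam20} combined with the semigroup property. I would write $H(x,t;y,s)=\int H(x,t;z,t')H(z,t';y,s)\,dV_{t'}(z)$ with $t'=(s+t)/2$. Differentiating in $x$ and using Bamler's $L^2$-type gradient bound on the first factor together with the Gaussian upper bound on the second factor gives the estimate. Alternatively one can use local Shi-type parabolic estimates on a rescaled parabolic ball where $|R|\le1$, but curvature is not bounded there, so Bamler's route is safer.

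\textbf{Lower bound and \eqref{gradientup1}.} For \eqref{gradientup1}, a Hamilton/Zhang-type estimate $|\nabla \log H|^2\le \frac{C}{t-s}\log\frac{A}{(t-s)^nH}$ holds for positive solutions bounded by $A(t-s)^{-n}$; this is the bounded-$R$ version in \cite{BamZh17}. Plugging in the two-sided Gaussian bounds gives $|\nabla\log H|\le C(t-s)^{-1/2}(1+d_s/\sqrt{t-s})$.

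The on-diagonal lower bound $H(x,t;x,s)\ge c(t-s)^{-n}$ follows from Perelman's reduced-distance argument, or from $\int H\,dV_s=1$ together with concentration in $B(x,s,Cr)$ and the volume upper bound $|B|\le Cr^{2n}$ at bounded rescaled scalar curvature. The gradient estimate then propagates this bound to $d_s(x,y)\lesssim r$. For larger distances I chain along a $d_s$-geodesic with $N\sim d_s^2/r^2$ balls using the semigroup property, which yields $\exp(-Cd_s^2/(t-s))$.

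\textbf{Main obstacle.} The hardest step is the Gaussian upper bound's passage from the integral to the pointwise estimate. Every intermediate time and radius must stay within the admissible range $r_0\le c_0(T-t_0)$ of Theorems \ref{localdistortion} and \ref{cutoffftn}. I expect to use the restriction \eqref{scalerange} with a small $c_1$ precisely to guarantee this.
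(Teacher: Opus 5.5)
Your outline has the same architecture as the paper's proof: an on-diagonal bound, Hein--Naber plus the conjugate mean value inequality of Proposition~\ref{supcheateqa} for the upper bound, Perelman's Harnack inequality for the lower bound, and the log-gradient estimate for \eqref{gradientup1}. The Gaussian upper bound, however, has a genuine gap.

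The Hein--Naber inequality \eqref{hngci} is a two-set statement,
$\nu_{x,t;l}(A_1)\,\nu_{x,t;l}(A_2)\le\exp(-d_l^2(A_1,A_2)/8(t-l))$.
It says the conjugate heat kernel measure concentrates at scale $\sqrt{t-l}$ around \emph{some} point, not around $x$. The bound you quote, with the ball centred at $y$, is false as written when $y$ is far from $x$.

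Anchoring the concentration at $x$ is exactly where the scalar curvature bound must enter, and your proposal never does this: your cut-off functions appear only inside the mean value inequality. The paper supplies the anchoring in Lemma~\ref{hklb}:
\begin{itemize}
\item Take the cut-off $\phi$ of Theorem~\ref{cutoffftn} based at $(x,t)$ at scale $r_0\sim\sqrt{t-s}$.
\item Use \eqref{nu-derivative} and $|\Box\phi|<r_0^{-2}$ to get $\nu_{x,t;s}(B(x,t,r_0))\ge\rho/2$.
\item Move the ball to time $s$ with Corollary~\ref{distcomparison}.
\end{itemize}
Then Hein--Naber with $A_1=B(x,l,\Lambda\sqrt{t-l})$ and $A_2=B(y,l,\sqrt{t-l})$ gives the Gaussian factor for every value of $d_s(x,y)$ at once. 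Together with the on-diagonal bound, this controls $\int H^2$ over $Q^+(y,s,r)$, and Proposition~\ref{supcheateqa} at $(y,s)$ finishes. No chaining or semigroup step is needed.

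So the real obstacle is this localization near $x$, not the integral-to-pointwise passage, which Section~\ref{sec4} already provides. Within your own outline, an alternative anchoring is available once the Gaussian lower bound is proved: the lower bound plus Bamler's volume noncollapsing, as in Remark~\ref{remark-alternative-hklb}.

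Your lower-bound argument also contains a step that would not go through. Chaining along a $d_s$-geodesic with $N\sim d_s^2/(t-s)$ steps needs near-diagonal lower bounds between points at $d_s$-distance $\sim(t-s)/d_s$ but at times up to $t-s$ later. That requires control of $d_\tau$ by $d_s$ at scales far below $\sqrt{\tau-s}$, which the available estimates do not give. Theorem~\ref{localdistortion} works only at scale $r_0$ over times $\alpha r_0^2$, and Corollary~\ref{globaldistortion} only gives $d_\tau\lesssim\sqrt{t-s}$.

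The chaining is unnecessary anyway. Start from Perelman's bound \eqref{heatkelb2} at the pole; its scalar-curvature integral is bounded by a multiple of $c_1$ by \eqref{type2-scalar} and \eqref{scalerange}. Then integrate \eqref{diffhar1} on $[\frac{s+t}{2},t]$, with $J\le C(t-s)^{-n}$, along a $d_t$-geodesic, which is \eqref{diffhar2}. This yields \eqref{heatke3} at all distances, and Corollary~\ref{distcomparison} converts $d_t$ into $d_s$.

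Finally, your semigroup route to \eqref{tiduub} does not produce the Gaussian factor without a weighted bound on $\nabla_xH(x,t;\cdot,t')$. The bound follows directly from $|\nabla_xH|^2\le\frac{C}{t-s}JH$, obtained from \eqref{diffhar1} and $\sigma\log\sigma^{-1}\le e^{-1}$, combined with the upper bound. It also follows simply from \eqref{gradientup1} multiplied by the upper bound.
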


We remark that, for \(t-s\ge\varepsilon_0>0\), all estimates in Theorem \ref{hke1} hold with constants depending on \(\varepsilon_0\) (see Remark \ref{bddgeom}). Consequently, the content of Theorem \ref{hke1} is the uniformity of the constants within (\ref{scalerange}) as \(t\to T\). This range of scales is precisely the one in which the cut-off function of Theorem \ref{cutoffftn} is available at scale \(\sqrt{t-s}\) around \((x,t)\); see the discussion following Theorem \ref{localdistortion}.

For Ricci flows with type-I scalar curvature bounds, Hallgren \cite{Hall23} (see \cite[Theorem 1.4]{BamZh17} and \cite[Theorem 2.1]{CaoZh11}) proved the following Gaussian type heat kernel estimates
\begin{align}\label{Gaussuplow}
\frac{1}{C_1(t-s)^n}\exp(-\frac{C_2d_s^2(x,y)}{t-s})\leq H(x,t;y,s)&\leq\frac{C_1}{(t-s)^n}\exp(-\frac{d_s^2(x,y)}{C_2(t-s)}),
\end{align}
for $\frac{3T}{4}\leq s<t<T,$ where $C_1,C_2$ are uniform constants which depend only on $n,\nu[g_0,2T]$ and the type-I constant (see Lemma \ref{type1-hk-ublb} for the precise statement). In particular, combining (\ref{Gaussuplow}) with Bamler's estimates for heat kernel \cite[Theorem 7.5]{Bam20} or (\ref{graheatuppernash}), we have
\begin{align}\label{gradientup}
\frac{|\nabla_xH(x,t;y,s)|_{\omega(t)}}{H(x,t;y,s)}&\leq\frac{C_2}{(t-s)^{1/2}}\Big(1+\frac{d_s(x,y)}{(t-s)^{1/2}}\Big).
\end{align}
The factor $1+\frac{d_s(x,y)}{\sqrt{t-s}}$ on the right hand side of (\ref{gradientup}) cannot be omitted, as one already sees on the Euclidean heat kernel, for which $\frac{|\nabla_xH|}{H}=\frac{|x-y|}{2(t-s)}.$ 

In Section \ref{sec6}, we will give a second proof of the Gaussian type upper bound in (\ref{gthub}), which uses neither the Hein-Naber inequality nor the mean value inequalities of Section \ref{sec4}, and which may be of independent interest. It relies on the integral lower bound of Lemma \ref{hklb} for the conjugate heat kernel measure and on Bamler's upper bounds for the heat kernel and its gradient in terms of the pointed Nash entropy \cite[Theorem 7.1, Theorem 7.2, Theorem 7.5]{Bam20}. The Gaussian lower bound in (\ref{gthub}) depends on Perelman's Harnack inequality, and the bound (\ref{gradientup1}) on the gradient then follows from Bamler's gradient estimate. The restriction (\ref{scalerange}) in Theorem \ref{hke1} comes only from the range of scales on which Theorems \ref{localdistortion} and \ref{cutoffftn} are available. Under a type-I scalar curvature bound this restriction disappears: the estimates of Bamler-Zhang are then available on the scale $\sqrt{T-t},$ and on larger scales one can use the recent work of Hallgren \cite[Lemma 3.1]{Hall23} and the semigroup property of the heat kernel. We obtain the following.

\begin{theorem}{\rm(Heat Kernel Estimates II)}\label{hke2}
Under the assumption of {\rm Theorem \ref{localdistortion}}, assume in addition that the scalar curvature satisfies the type-I bound
\begin{equation}\label{typeIscalar}
R(x,t)\leq\frac{C_0}{T-t}~~~~\text{on}~~X\times[0,T)
\end{equation}
for some $C_0<\infty.$ Then there are constants $C_1,~C_2$ that only depend on $\omega_0,~n,~C_0$ such that for any $0\leq s<t<T$, we have
\begin{align}\label{Gaussuplow2}
\frac{1}{C_1(t-s)^n}\exp(-\frac{C_2d_s^2(x,y)}{t-s})\leq H(x,t;y,s)&\leq\frac{C_1}{(t-s)^n}\exp(-\frac{d_s^2(x,y)}{C_2(t-s)})\\\label{gradientup2}
\frac{|\nabla_xH(x,t;y,s)|_{\omega(t)}}{H(x,t;y,s)}&\leq\frac{C_2}{(t-s)^{1/2}}\Big(1+\frac{d_s(x,y)}{(t-s)^{1/2}}\Big).
\end{align}
\end{theorem}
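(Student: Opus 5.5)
The proof will split the range $0\le s<t<T$ into three regimes and treat them separately. In all of them the constants may depend only on $n$, $\omega_0$ and $C_0$.

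\textbf{Regime 1: bounded geometry.} Here $t\le 3T/4$, or more generally $s\le T/2$ and $t-s\ge\varepsilon_0$. On $[0,3T/4]$ the flow has bounded geometry, so all of (\ref{Gaussuplow2}) and (\ref{gradientup2}) hold with constants depending on $\omega_0$; this is Remark \ref{bddgeom}.

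\textbf{Regime 2: the type-I scale.} Here $s\ge 3T/4$ and $t-s\le c(T-t)$. Under (\ref{typeIscalar}) we have $R\le C_0 (T-t)^{-1}\lesssim r_0^{-2}$ on $X\times[t-\alpha r_0^2,t]$ whenever $r_0\le c_0\sqrt{T-t}$. The type-I versions of Theorem \ref{localdistortion} and Theorem \ref{cutoffftn} are therefore available at scale $r_0=\sqrt{t-s}$ around $(x,t)$. Running the argument of Theorem \ref{hke1} verbatim, with the type-I range replacing (\ref{scalerange}), gives the two-sided Gaussian bound in this regime. The ingredients are the Hein--Naber integral bound, the mean value inequality of Section \ref{sec4}, the cut-off function, and distance distortion to move between $d_s$ and $d_t$.

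\textbf{Regime 3: large scales.} Here $s\ge 3T/4$ and $t-s\ge c(T-t)$. This is exactly the content of Hallgren's estimate, Lemma \ref{type1-hk-ublb} (\cite[Lemma 3.1]{Hall23}), for type-I scalar curvature flows. Its constants depend only on $n$, $C_0$ and $\nu[g_0,2T]$, and the last is bounded by Theorem \ref{ZhZh22}.

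\textbf{Gluing the regimes.} Take $s<3T/4<t$ with $t-s$ small. Then $T-t\ge T/4-(t-s)$ is bounded below, so the pair falls into Regime 2 with uniform constants. If instead $t-s$ is bounded below by a fixed amount, I use the semigroup property
$$H(x,t;y,s)=\int_X H(x,t;z,\tau)H(z,\tau;y,s)\,dg_\tau(z),\qquad \tau=\tfrac{3T}{4}.$$
Combining the Regime 1 bound on $[s,\tau]$ with the Hallgren bound on $[\tau,t]$, and using volume noncollapsing (\ref{noncollapse2}) together with the diameter bound (\ref{diameterbd}), gives $H\approx (t-s)^{-n}$. Since $t-s\gtrsim 1$ and $\operatorname{diam}\le D$, the Gaussian factor is then comparable to $1$, so both bounds hold after enlarging $C_1,C_2$.

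\textbf{Gradient bound (\ref{gradientup2}).} I will deduce it from Bamler's gradient estimate \cite[Theorem 7.5]{Bam20}, namely (\ref{graheatuppernash}). In the form I intend to use it, this bounds $|\nabla_x H|$ at $(x,t)$ by $(t-s)^{-1/2}$ times a Nash-entropy-weighted quantity controlled by $H$ itself, up to a factor of the type $\sqrt{\log(C(t-s)^{-n}/H)}$. The two-sided Gaussian bound already established controls the pointed Nash entropy from both sides. It also shows that $\log\big((t-s)^{-n}/H\big)\lesssim 1+d_s^2(x,y)/(t-s)$, and taking the square root yields the factor $1+d_s(x,y)/\sqrt{t-s}$.

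\textbf{Main obstacle.} I expect the hard part to be the lower bound in Regime 3 and in the gluing step. One has to check that Hallgren's lemma covers all of $3T/4\le s<t<T$ including $t-s\gg T-t$, where distances $d_s$ and $d_t$ are no longer comparable. If it does not, the fallback is to chain Perelman's Harnack inequality through the $W_1$-monotonicity of conjugate heat kernels, using the $H_n$-center bounds from Bamler together with the entropy bound.
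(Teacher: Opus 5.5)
Your proposal is correct and follows essentially the paper's route (the type-I case of Proposition \ref{Gauss-heat-kernel-ub-lb}, including its Step 6). That route is: bounded geometry for early or well-separated times; Hallgren's Lemma \ref{type1-hk-ublb} for $s\geq 3T/4$; a semigroup or bounded-geometry argument for $s<3T/4<t$; and Bamler's estimate (\ref{graheatuppernash}) with the Nash entropy lower bound for the gradient ratio. The only variation is at the type-I scale, where the paper gets the upper bound from Bamler's Nash-entropy bounds plus Lemma \ref{hklb} rather than rerunning the Hein--Naber/mean-value argument of Section \ref{sec5}. Also, Lemma \ref{type1-hk-ublb} as stated already covers all $3T/4\le s<t<T$ (including $t-s\gg T-t$), so your Regime 2 is redundant and the ``main obstacle'' you anticipate does not arise.
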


\begin{remark}\label{typeIremark}
Let us explain the role of the two scalar curvature bounds. The Gaussian lower bound depends on Perelman's estimate \eqref{heatkelb2}, which requires a control of the quantity $$\frac{1}{\sqrt{t-s}}\int_s^t\sqrt{t-\tau}R(y,\tau)d\tau.$$
The type-II bound \eqref{type2-scalar} controls this quantity on \eqref{scalerange}, whereas the type-I bound \eqref{typeIscalar} controls it for all $s<t$; see \eqref{typeI-Lintegral}. Likewise, the local distance and cutoff estimates use scales $r_0\lesssim T-t_0$ under type II and $r_0\lesssim\sqrt{T-t_0}$ under type I. Upon completing the manuscript, the authors discovered that Jian-Song \cite{JS26} had already proved that the type-I scalar curvature bound \eqref{typeIscalar} holds in general. \qed 
\end{remark}

Let us now proceed to the third part of this paper.

\subsection{Distance distortion II and the structure of GH limit}

Bamler--Zhang \cite{BamZh19} studied the behavior of the distance function for Ricci flows with uniformly bounded scalar curvature. They showed that, on short time intervals, the distance function is \(\frac{1}{2}\)-H\"older continuous in a uniform way. Hence it can be extended continuously up to the singular time.

In Section \ref{sec7} we present the consequences of Theorem \ref{hke1} for the distance function, in the spirit of Theorem 1.1 in \cite{BamZh19}.

\begin{theorem}
Under the assumption of {\rm Theorem \ref{localdistortion}}, there is a uniform constant $C$ such that
\begin{equation}\label{impdde}
d_s(x,y)-C\sqrt{t-s}\leq d_t(x,y)
\end{equation}
for all $x,y\in X$ and $0\leq s<t<T$ satisfying \eqref{scalerange}. Under \eqref{typeIscalar}, this lower estimate holds for every $0\leq s<t<T$. For each fixed $s<T$, there is $C(s)<\infty$ such that
\begin{equation}\label{impdde-upper}
d_t(x,y)\leq d_s(x,y)+C(s)\sqrt{t-s},\qquad s<t\leq\frac{T+s}{2}.
\end{equation}
\end{theorem}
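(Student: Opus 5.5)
We prove \eqref{impdde} as in Bamler--Zhang \cite[Theorem 1.1]{BamZh19}, with the Gaussian bounds of Theorem \ref{hke1} (or of Theorem \ref{hke2} under \eqref{typeIscalar}) replacing those for bounded scalar curvature. Fix $x,y$ and $s<t$ in the range \eqref{scalerange}. Consider the conjugate heat kernels $\nu_x:=H(x,t;\cdot,s)\,dV_s$ and $\nu_y:=H(y,t;\cdot,s)\,dV_s$; these are probability measures on $(X,d_s)$. The argument has three steps.

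\textbf{Step 1.} The Gaussian upper bound \eqref{gthub} shows that $\nu_x$ is concentrated near $x$ at scale $\sqrt{t-s}$ with respect to $d_s$. Integrating \eqref{gthub} against the volume form, and using the volume upper bound $|B_s(x,r)|_s\le Cr^{2n}$ for $r\lesssim\sqrt{t-s}$ (which the scale restriction supplies), gives
\[
\int_X d_s(x,z)\,d\nu_x(z)\le C\sqrt{t-s}.
\]
The same holds for $y$. Alternatively one can use Bamler's variance bound $\mathrm{Var}_s(\nu_x,\delta_x)\le C(t-s)$ directly, but we need it about the base point $x$ in $d_s$. The Gaussian upper bound with $d_s$ is exactly what gives this, so we use \eqref{gthub} as above.

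\textbf{Step 2.} The $W_1$-distance between conjugate heat flows is monotone: $\tau\mapsto W_1^{d_\tau}(\nu_{x,\tau},\nu_{y,\tau})$ is nondecreasing as $\tau$ decreases. This is the Kantorovich duality argument. For a $1$-Lipschitz function $f$ on $(X,d_s)$, solve the forward heat equation from $f$ at time $s$. The Bochner formula along Ricci flow, $\square|\nabla u|^2=-2|\nabla^2u|^2\le0$, and the maximum principle keep the solution $1$-Lipschitz at all later times, so pairing with conjugate flows is monotone. At $\tau=t$ the two measures are the Dirac masses $\delta_x$ and $\delta_y$. Hence
\[
d_t(x,y)=W_1^{d_t}(\delta_x,\delta_y)\ge W_1^{d_s}(\nu_x,\nu_y)\ge d_s(x,y)-\int d_s(x,\cdot)\,d\nu_x-\int d_s(y,\cdot)\,d\nu_y ,
\]
where the last inequality is the triangle inequality for $W_1$. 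Combining with Step 1 yields \eqref{impdde}. Under \eqref{typeIscalar}, Theorem \ref{hke2} gives \eqref{gthub} for all $s<t$, and the same argument applies verbatim.

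\textbf{Step 3.} For the upper bound \eqref{impdde-upper}, fix $s$ and restrict to $t\le (T+s)/2$. Then $T-\tau\ge (T-s)/2$ for every $\tau\in[s,t]$, so on $[s,(T+s)/2]$ the flow is smooth with curvature bounded by a constant depending on $s$ (see Remark \ref{bddgeom}). In particular $|\Ric|\le K(s)$ there. If $t-s\ge\varepsilon(s)$, the bound $d_t\le e^{K(t-s)}d_s\le d_s+C$ together with the diameter bound \eqref{diameterbd} gives the claim after enlarging $C(s)$. If $t-s$ is small, we use Hamilton's/Perelman's distance distortion lemma: under $\Ric\le K$ on $B(x,r)$, one has $\partial_\tau d_\tau\ge -C(n)(Kr+r^{-1})$. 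Choosing $r=\sqrt{t-s}$ and integrating gives
\[
d_t\le d_s+C(s)\sqrt{t-s}.
\]
Here we use the sign convention $\partial_t g=-2\Ric$ appropriate to the flow \eqref{KRF}, under which $\partial_\tau d_\tau\le C(n)(Kr+r^{-1})$ is the relevant direction for the upper bound.

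The main obstacle is Step 1: converting the pointwise bound \eqref{gthub} into the first-moment bound requires the volume growth of $d_s$-balls to be controlled uniformly up to scale $\sqrt{t-s}$, and at larger radii, where the Gaussian decay must beat the volume growth. We would handle this with the noncollapsing volume bounds \eqref{noncollapse2}, the distortion estimate Theorem \ref{localdistortion}, and a dyadic annulus decomposition.
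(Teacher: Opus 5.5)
Your Step 2 is the paper's argument in different language. Lemma \ref{dis2lb} evolves the single $1$-Lipschitz function $d_s(x,\cdot)$ by the heat equation and uses $|\nabla u|\le 1$, which is exactly the case of Kantorovich duality that your triangle-inequality step needs. One slip: the $W_1$-distance is nondecreasing in forward time, not ``as $\tau$ decreases''. The inequality you actually write has the correct direction.

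\textbf{The gap is in Step 1.} To turn the Gaussian upper bound into $\int_X d_s(x,\cdot)\,d\nu_x\le C\sqrt{t-s}$ by dyadic annuli, you need a polynomial \emph{upper} bound on $|B(x,s,r)|_s$ for radii well beyond $\sqrt{t-s}$. None of the tools you list provides one:
\begin{itemize}
\item \eqref{noncollapse2} is a lower bound.
\item Theorem \ref{localdistortion} says nothing about volume.
\item The scale-restricted estimates control $|B(x,s,r)|_s$ only for $r\lesssim T-s$. This is what you get, for example, from the Gaussian lower bound applied to pairs $(s,s+r^2)$.
\end{itemize}
That is not enough. When $t-s$ is comparable to $c_1(T-t)^2$, the Gaussian factor $\exp(-r^2/(C_2(t-s)))$ at $r\sim T-s$ is bounded below by a positive constant. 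Estimating the remaining annuli by the total volume then gives only a bound of order $(t-s)^{-n}(T-s)$, which is far larger than $\sqrt{t-s}$. A covering argument seeded at scale $\sqrt{t-s}$ with \eqref{noncollapse2} does not rescue this either. Because $\theta>2n$, the number of balls per covering step grows like $(t-s)^{-(\theta-2n)/2}$, so the resulting growth is not beaten uniformly by the Gaussian decay.

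\textbf{The missing ingredient} is a non-inflating estimate valid at every scale, which the paper supplies in Lemma \ref{uppervb}. Bamler's Theorem 8.1 in \cite{Bam20} needs only $R\ge -R_0$ (Lemma \ref{scalarlbunif}) and $\mathcal N_{x,s}\le 0$, so it gives $|B(x,s,r)|_s\le Cr^{2n}$ for all $r^2\le s$. A covering argument at a \emph{fixed} scale then yields $|B(x,s,r)|_s\le Ar^{2n}e^{Ar}$ for all $r$; this is the only place \eqref{noncollapse2} enters. With this bound your dyadic sum is $O(\sqrt{t-s})$ uniformly, both in the range \eqref{scalerange} and, via Theorem \ref{hke2}, for all $s<t$ under \eqref{typeIscalar}.

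Alternatively, the variance route you set aside works once you combine it with Lemma \ref{hklb} (or Remark \ref{remark-alternative-hklb} in the type-I case). Let $z$ be an $H_m$-center of $\nu_x$ at time $s$, which exists by \cite{Bam20}. Then $\kappa_0\,(d_s(x,z)-\Lambda\sqrt{t-s})_+^2\le\int_X d_s^2(z,\cdot)\,d\nu_x\le H_m(t-s)$. Hence $\int_X d_s(x,\cdot)\,d\nu_x\le d_s(x,z)+\sqrt{H_m(t-s)}\le C\sqrt{t-s}$, with no volume upper bound needed.

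\textbf{Step 3.} Drop the short-time branch. Perelman's Lemma 8.3 in \cite{Per02}, under $\Ric\le K$, bounds $\partial_\tau d_\tau$ from \emph{below}, not from above; an upper bound on $\partial_\tau d_\tau$ comes instead from $\Ric\ge -K$. The branch is also unnecessary, because your first branch already works for every $t\in(s,\frac{T+s}{2}]$: $d_t-d_s\le (e^{K(t-s)}-1)\diam(X,d_s)\le C(s)(t-s)\le C(s)\sqrt{t-s}$. This is exactly the paper's Lemma \ref{dis2ub}.
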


The lower estimate follows from the Gaussian upper bound (\ref{gthub}) and Bamler's volume bound valid within (\ref{scalerange}) and, under condition (\ref{typeIscalar}), for all $0\leq s<t<T.$ For the right hand inequality we only obtain a local statement, with a constant $C(s)$ which is not asserted to remain bounded as $s\to T.$ Motivated by Hallgren--Jian--Song--Tian \cite{HJST23}, the heat-equation approach to a uniform constant would require a gradient estimate for the backward heat equation, with constants uniform up to the singular time. If \eqref{impdde-upper} holds with constants uniform in \(s\), then the argument of Bamler-Zhang \cite{Bam18,BamZh19} can be used to analyze the structure of the limit space. The uniform convergence in Section \ref{sec8} will instead follow from localization, Wasserstein monotonicity, and packing.

We also combine Theorem \ref{hke2} with Bamler's theory \cite{Bam20,Bam20b,Bam20c}, with Hallgren \cite{Hall23}, and with Hallgren--Zhang \cite{HZ26} in order to describe the singular time slice of the flow and its scalar type I blow-up limits. We obtain the following statements.

\begin{theorem}
Under the assumption of {\rm Theorem \ref{localdistortion}}, we have
\begin{itemize}
\item[(i)] As \(t\to T\), the distance functions \(d_t\) converge uniformly on \(X\times X\) to a pseudo-distance \(d_T\). The Gromov--Hausdorff limit of \((X,d_t)\) is unique. It equals the metric identification \((X/\!\sim,d_T)\), and is isometric to both the asymptotic boundary \(\mathcal{M}_T\) of Bamler {\rm \cite[Section 2.6]{Bam20c}} and the time-zero slice \(Z_0\) of Li--Zhang {\rm \cite{LZ26} (Theorem \ref{thmB} in Section \ref{sec8})}.
\item[(ii)] The regular part of the limit is the K\"ahler manifold \((X\setminus\mathrm{Null}(\alpha),\omega_T)\) of Collins--Tosatti {\rm\cite{CT15}}, with \([\omega_T]=\alpha:=[\beta]\). Under assumption \eqref{omega0-omegay}, the map \(\pi\) descends to a Lipschitz surjection from the limit onto \((Y,d_{\omega_Y})\). It is a homeomorphism if and only if the fibres of \(\pi\) collapse {\rm(Proposition \ref{propY} in Section \ref{sec8})}.
\item[(iii)] At a fixed base point, the Type I blow-up limits \((X,(T-t_i)^{-1}g(t_i),x)\) are time slices of tangent flows in the sense of Bamler, which are singular shrinking K\"ahler--Ricci solitons, and carry the structure of normal complex analytic varieties with log terminal singularities and a polarized Fano fibration {\rm (Theorem \ref{thmA} in Section \ref{sec8})}.
\end{itemize}
\end{theorem}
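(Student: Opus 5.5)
The theorem has three parts, and I would prove them in order. Parts (ii) and (iii) will rest on the structure obtained in (i).

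\textbf{Part (i).} I would first establish uniform equicontinuity of $d_t$ on $X\times X$ in the time variable, and then identify the limit. Rather than a uniform two-sided distance bound, which is unavailable since $C(s)$ in \eqref{impdde-upper} may blow up, I would use the $W_1$-monotonicity of conjugate heat flows. For $x\in X$ and $s<t$, let $\nu_{x,t;s}=H(x,t;\cdot,s)\,dV_s$. Bamler's monotonicity gives
$$W_1^{d_s}(\nu_{x,t;s},\nu_{y,t;s})\le d_t(x,y).$$
The Gaussian upper bound of Theorem \ref{hke2}, available globally because \cite{JS26} gives \eqref{typeIscalar}, shows that $\nu_{x,t;s}$ is concentrated on $B(x,s,C\sqrt{t-s})$ up to small mass. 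Combined with the uniform diameter bound \eqref{diameterbd}, this yields
$$d_s(x,y)-C\sqrt{t-s}\le W_1^{d_s}(\nu_{x,t;s},\nu_{y,t;s})\le d_t(x,y),$$
which recovers \eqref{impdde} for all $s<t<T$. Hence $t\mapsto d_t(x,y)+C\sqrt{T-t}$ is almost monotone, and by a standard argument $d_t(x,y)$ has a limit $d_T(x,y)$ for every pair.

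To upgrade pointwise convergence to uniform convergence, I would need equicontinuity of $\{d_t\}$ in $(x,y)$. For this I would combine the noncollapsing volume bounds \eqref{noncollapse2} with the volume lower bound from the $\nu$-entropy to get a uniform packing bound for $(X,d_t)$ at every scale. Localization near the regular part $X\setminus\mathrm{Null}(\alpha)$, where $\omega(t)\to\omega_T$ smoothly by Collins--Tosatti \cite{CT15}, then controls distances there, and packing plus the lower estimate controls the small-measure singular set. I expect this to be the main obstacle: excluding ``shortcuts'' that shrink through the singular set uniformly in $t$.

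Once $d_t\to d_T$ uniformly, Gromov--Hausdorff convergence of $(X,d_t)$ to $(X/\!\sim,d_T)$ is immediate, since the identity map is a $\sup|d_t-d_T|$-approximation. In particular the limit is unique. The identification with $\mathcal M_T$ comes from Bamler's definition of the asymptotic boundary, using the same $W_1$ distances between conjugate heat kernels based at time $T$. The identification with $Z_0$ follows from Theorem \ref{thmB}.

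\textbf{Part (ii).} On compact subsets of $X\setminus\mathrm{Null}(\alpha)$ the smooth convergence gives local isometry with $(X\setminus\mathrm{Null}(\alpha),\omega_T)$. Density of this set follows from the volume noncollapsing together with $\mathrm{Null}(\alpha)$ being an analytic set of zero measure. Under \eqref{omega0-omegay}, the Schwarz lemma $\omega(t)\ge c\,\pi^*\omega_Y$ gives $d_{\omega_Y}(\pi x,\pi y)\le C d_t(x,y)$, so $\pi$ descends to a Lipschitz surjection. Injectivity of the descended map is exactly the statement that each fibre has $d_T$-diameter zero, which gives the homeomorphism criterion.

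\textbf{Part (iii).} The type-I scalar bound, with the uniform Nash entropy bound of Theorem \ref{ZhZh22}, lets me apply Bamler's compactness \cite{Bam20b,Bam20c} to the rescaled flows $(T-t_i)^{-1}g(T+(T-t_i)\tau)$ based at the conjugate heat kernel from $(x,T)$. This produces a metric-flow limit, which is a tangent flow and hence a metric soliton. Hallgren \cite{Hall23} shows the time slices are the pointed GH limits of $(X,(T-t_i)^{-1}g(t_i),x)$, using the heat kernel bounds of Theorem \ref{hke2} to compare $W_1$ with distance. The complex-analytic, log terminal and polarized Fano fibration structure, and the uniqueness of tangent cones, are then imported from Hallgren--Zhang \cite{HZ26}, stated as Theorem \ref{thmA}.
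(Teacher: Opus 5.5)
\paragraph{Gap 1: uniform convergence in (i).} The step you flag as the main obstacle is a genuine gap. You give no argument for uniform convergence. The mechanism you suggest, smooth convergence on compact subsets of $X^{\circ}$ plus control of ``shortcuts'' near $\mathrm{Null}(\alpha)$, is not an argument: smooth convergence on compacta says nothing about $d_t$-geodesics that pass near the null locus.

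The paper does not analyze the null locus here. It constructs singular-time conjugate heat kernels $\mu^{(x)}=\lim_{t\to T}\nu_{x,t}$ (Lemma \ref{localization}) with $d^{(s)}_{W_1}(\mu^{(x)}_s,\delta_x)\le C_3\sqrt{T-s}$. It then uses $F_t(x,y)=d^{(t)}_{W_1}(\mu^{(x)}_t,\mu^{(y)}_t)$, which
\begin{itemize}
\item is nondecreasing in $t$,
\item satisfies $|F_t-d_t|\le 2C_3\sqrt{T-t}$ and $F_t\le d_T$,
\item is $1$-Lipschitz in each variable for $d_T$.
\end{itemize}
Packing via \eqref{noncollapse2} makes $(X/\!\sim,d_T)$ totally bounded, and Dini's theorem on the completion gives uniformity.

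Your own ingredients would also close the gap, via a different observation from the one you propose. Letting $t\to T$ in your lower bound gives $d_s\le d_T+C\sqrt{T-s}$ on all of $X\times X$. This is half of uniform convergence, and it makes the $d_s$ uniformly almost $1$-Lipschitz with respect to $d_T$. Take a finite $\delta$-net $\{p_k\}$ of $(X,d_T)$, obtained from packing at a late time plus pointwise convergence on finitely many pairs. The triangle inequality then gives
\[
\sup(d_T-d_s)\le\max_{k,l}(d_T-d_s)(p_k,p_l)+4\delta+2C\sqrt{T-s},
\]
whose $\limsup$ is at most $4\delta$. You did not make this argument.

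A smaller point: your lower bound is fine, but the $C\sqrt{t-s}$ error needs the first-moment bound $\int d_s(x,\cdot)\,d\nu_{x,t;s}\le C\sqrt{t-s}$. That comes from the Gaussian upper bound together with Lemma \ref{uppervb}, not from concentration plus the diameter bound.

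\paragraph{Gap 2: the singular-time kernels, and the regular part in (ii).} The isometry with $\mathcal{M}_T$, and the base flow you use in (iii), cannot be read off from Bamler's definition. You only work with finite-time kernels $\nu_{x,t;s}$ and never produce a flow based at $(x,T)$. What is needed is exactly Lemma \ref{localization}:
\begin{itemize}
\item compactness of $H(x_j,t_j;\cdot,\cdot)$ from the Gaussian bounds;
\item uniqueness of the limit from $W_1$-monotonicity combined with the $O(\sqrt{T-s})$ localization;
\item the identity $d_{\mathcal{M}_T}(\mu^{(x)},\mu^{(y)})=d_T(x,y)$;
\item surjectivity: every $\mu\in\mathcal{M}_T$ equals some $\mu^{(y)}$, via variance minimizers $z_i$ at times $s_i\nearrow T$, the reproduction formula and monotonicity.
\end{itemize}
This uniqueness is also what makes the blow-up at a fixed $x$ well defined. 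It is what lets you identify the $\mathbb{F}$-limit with the pointed Gromov--Hausdorff limit of $(X,\tau_i^{-1/2}d_{t_i},x)$.

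In (ii) you show only that $\Phi_T(X^{\circ})$ is dense and locally isometric to $(X^{\circ},\omega_T)$. Even this needs injectivity of $\Phi_T$ on $X^{\circ}$, and the fact that short minimizing $d_t$-geodesics stay in the region of smooth convergence. The reverse inclusion, that points of $\Phi_T(\mathrm{Null}(\alpha))$ are not regular, is missing. The paper obtains it from the regular-worldline description of Li--Zhang, together with the contradiction between $\int_W\omega(t)^l\ge c^l\int_{W\cap U_x}\omega_0^l>0$ and $\int_W\alpha^l=0$.
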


The arguments differ from those announced in \cite[Section 1]{JS26} for the sequel of that paper (Ricci potentials and partial $C^0$-estimates, in the spirit of \cite{JST23,HJST23}), and from those of Li--Zhang \cite{LZ26} (the spacetime completion of Fang--Li \cite{FL25} and curvature radius estimates). The only ingredient taken from \cite{JS26} is the scalar curvature bound (\ref{typeIscalar}). Instead, the heat kernel estimates of Theorem \ref{hke2}, together with the monotonicity of the $W_1$-distance of conjugate heat flows \cite[Lemma 2.7]{Bam20}, replace the partial $C^0$-estimates and the curvature radius estimates, respectively. The soliton property of the blow-up limits follows from the monotonicity of the Nash entropy \cite[Theorem 2.37]{Bam20c}, while the complex analytic structure comes from Hallgren-Zhang \cite{HZ26}.

{\bf Declaration on the use of AI.} AI tools were used in developing the argument in Section \ref{sec8}. AI tools were also used for English-language editing. The authors take full responsibility for all mathematical claims and for any errors that remain.

\section{Preliminary}\label{preliminary}
Let $(M,g(t))$ be a Ricci flow defined on a compact $m$-manifold $M$ and $t\in[0,T),~T\leq\infty.$ For any $(x_0,t_0)\in M\times[0,T),r>0,$ we will frequently use the notation
\begin{align*}
Q^+(x_0,t_0,r):&=\{(x,t)\in M\times[0,T)~~:~d_t(x_0,x)<r,~~t_0\leq t\leq t_0+r^2\}\\
Q^-(x_0,t_0,r):&=\{(x,t)\in M\times[0,T)~~:~d_t(x_0,x)<r,~~t_0-r^2\leq t\leq t_0\}
\end{align*}
to denote the ``forward'' and ``backward'' parabolic cubes. For any $(x,t)\in M\times[0,T),~r\geq0$ and $\triangle t\in \mathbb{R}$ we denote by
$$P(x,t,r,\triangle t):=B(x,t,r)\times[\min\{t,t+\triangle t\},\max\{t,t+\triangle t\}]$$
the parabolic neighborhood around $(x,t)$, whenever this time interval is contained in $[0,T)$.

{\bf Conventions.} Throughout the paper $m=2n$. For a K\"ahler form $\omega$, let $g_\omega$ denote the usual associated Riemannian metric, for which $\Delta_{g_\omega}=2\Delta_\omega$ and $R_{g_\omega}=2R_\omega$. We use the fixed normalization
$$g(t):=2g_{\omega(t)},\qquad dg_t=2^n\frac{\omega(t)^n}{n!}.$$
Then the original time variable satisfies
$$\partial_tg=-2\Ric_g,\qquad \Delta_g=\Delta_\omega,
\qquad R_g=R_\omega=\tr_{\omega(t)}\Ric(\omega(t)).$$
The symbols $d_t$, $dg_t$, and $H$ refer to this Riemannian metric, its volume, and its heat kernel; $\delta_y$ in \eqref{heatkernel} is interpreted with this volume. For scalar functions, the complex gradient norm $|\partial u|_\omega$ agrees with $|du|_g$. Distances denoted by $d_\omega$, including $d_{\omega_Y}$ and $d_{\omega_T}$, use the same normalization. The usual Riemannian distances differ by the fixed factor $\sqrt2$. In particular all Perelman functionals, Nash entropies, and the shrinker equation use the standard real Ricci-flow convention, with no implicit time change. The noncollapsed finite-time singular setting has $n\geq2$. In complex dimension one, a nef class of positive volume is K\"ahler and cannot be the limiting boundary class.

Note that the evolution equation for the scalar curvature reads
$$\frac{\partial}{\partial t}R=\Delta R+2|Ric|^2\geq\Delta R+\frac{2}{m}R^2.$$
Applying the maximum principle, we can deduce the following lower bound on the scalar curvature: if $R(\cdot,t_0)\geq R_{min}$ with $R_{min}\leq0$, then for all $t\geq t_0,$ we have
\begin{equation}\label{scalarlb}
R(\cdot,t)\geq\frac{m}{2}\frac{R_{min}}{\frac{m}{2}-R_{min}(t-t_0)}.
\end{equation}
In particular, if $0\leq t_0<t<T,$ then
\begin{equation}\label{scalarlb1}
R(\cdot,t)\geq-\frac{m}{2(t-t_0)}
\end{equation}
In the case of the K\"ahler-Ricci flow (\ref{KRF}) with $T<\infty$ the bound (\ref{scalarlb}) can be improved to a bound which is uniform on the whole of $X\times[0,T),$ and which will be used repeatedly below.
\begin{lemma}\label{scalarlbunif}
Let $(X,\omega(t))$ be a K\"ahler-Ricci flow \rm{(\ref{KRF})} with $t\in[0,T),$ $T<\infty.$ Then
\begin{equation}\label{scalarlb2}
R(\cdot,t)\geq-R_0:=\min\{0,\min_XR(\cdot,0)\}~~~~\text{on}~~X\times[0,T).
\end{equation}
In particular $R_0=R_0(\omega_0),$ and $dg_t\leq e^{R_0(t-s)}dg_s$ for $0\leq s\leq t<T.$
\end{lemma}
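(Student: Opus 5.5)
The plan is to apply the maximum principle directly to the scalar curvature evolution equation and then integrate the resulting lower bound in the volume form evolution; no deeper input from the paper should be needed. Throughout, the constant $R_0\geq0$ is read so that $-R_0=\min\{0,\min_XR(\cdot,0)\}$.

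First, I will use the evolution equation recalled just above the lemma. With the normalization $g(t)=2g_{\omega(t)}$ we have $\partial_tg=-2\Ric_g$, and hence
$$\frac{\partial}{\partial t}R=\Delta R+2|\Ric|^2\geq\Delta R.$$
I will show that $R_{\min}(t):=\min_XR(\cdot,t)$ is nondecreasing on $[0,T)$. Since $X$ is compact and $R$ is smooth on $X\times[0,T)$, the weak maximum principle applies on each interval $[0,t_1]$ with $t_1<T$. A convenient way to run it is to compare $R$ with the constant $-R_0-\varepsilon$ for $\varepsilon>0$. At a first time and point where $R=-R_0-\varepsilon$, we would have $\Delta R\geq0$ and $\partial_tR\leq0$. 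On the other hand the equation gives $\partial_tR\geq\Delta R+\frac{2}{m}R^2>0$, because $R^2>0$ there. This is a contradiction. Letting $\varepsilon\to0$ gives $R\geq-R_0$ on $X\times[0,T)$. Alternatively, one can let $t_0=0$ and $R_{\min}=-R_0\leq0$ in \eqref{scalarlb}: the right-hand side $\frac{m}{2}\frac{-R_0}{\frac m2+R_0t}$ is $\geq-R_0$. Note that no use of the finiteness of $T$ or of the K\"ahler structure is really needed for this step. The uniformity is simply the statement that the bound does not deteriorate as $t\to T$.

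Second, for the volume comparison I will use $\partial_t\,dg_t=\frac12\tr_g(\partial_tg)\,dg_t=-R\,dg_t$. By the first step this gives $\partial_t\,dg_t\leq R_0\,dg_t$ pointwise on $X$. Integrating this ODE in time at each fixed point, from $s$ to $t$, yields $dg_t\leq e^{R_0(t-s)}dg_s$ for $0\leq s\leq t<T$.

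The only point requiring care, which I expect to be the main (mild) obstacle, is bookkeeping of conventions. One must check that, with the paper's normalization $g=2g_\omega$ and $R_g=R_\omega$, the real Ricci-flow equation and the formula $\partial_t dg=-R\,dg$ hold with exactly these constants. The conventions paragraph ensures this, so no extra factor of $2$ enters. Finally, $R_0$ depends only on $\omega_0$, since it is determined by $R(\cdot,0)=R_{\omega_0}$.
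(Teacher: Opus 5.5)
Your proposal is correct and follows the paper's proof. The paper uses exactly your ``alternative'' route: it plugs $t_0=0$ and $R_{\min}=\min\{0,\min_XR(\cdot,0)\}$ into \eqref{scalarlb}, uses $\frac m2-R_{\min}t\geq\frac m2$, and then obtains the volume comparison from $\partial_t\,dg_t=-R\,dg_t$. Your direct $\varepsilon$-barrier maximum-principle argument is an equally valid self-contained substitute for the first step.
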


\begin{proof}
Apply (\ref{scalarlb}) with $t_0=0$ and $R_{min}=\min\{0,\min_XR(\cdot,0)\}.$ If $R_{min}=0$ there is nothing to prove. If $R_{min}<0,$ then $\frac{m}{2}-R_{min}t\geq\frac{m}{2}$ for $t\geq0,$ hence
$$R(\cdot,t)\geq\frac{m}{2}\frac{R_{min}}{\frac{m}{2}-R_{min}t}\geq\frac{\frac{m}{2}R_{min}}{\frac{m}{2}}=R_{min}.$$
The last assertion follows from $\frac{\partial}{\partial t}dg_t=-Rdg_t.$
\end{proof}

\begin{remark}\label{bddgeom}
Since $T$ is the maximal existence time of \rm{(\ref{KRF})}, the flow is smooth with uniformly bounded curvature on $X\times[0,T']$ for every $T'<T,$ and the metrics $\omega(t),~t\in[0,T'],$ are uniformly equivalent to $\omega_0.$ On such a compact time interval all the estimates below (distance distortion, cut-off functions, two sided Gaussian bounds for the heat kernel and upper bounds for its gradient) hold by the standard theory of the heat kernel of a smooth family of metrics with bounded geometry (see e.g. \cite[Chapter 26]{CCGGIIKLLN10}; alternatively one may apply \cite{BamZh17} with $R_0:=\sup_{X\times[0,T']}|R|$), with constants depending on $n,\omega_0$ and $T'.$ We shall use this with $T'=\frac{T}{2}$ and $T'=\frac{7T}{8}$. Since $T$ is determined by $\omega_0,$ the resulting constants depend only on $n,\omega_0.$ Moreover, by the diameter bound \rm{(\ref{diameterbd})}, if $t-s\geq\varepsilon_0$ for some fixed $\varepsilon_0>0,$ then the Gaussian factors $\exp\big(\pm\frac{d^2_{\bullet}(x,y)}{C(t-s)}\big)$ occurring below are bounded from above and from below by positive constants depending only on $\varepsilon_0,n,\omega_0$. For completeness, if $t-s\geq\varepsilon_0$, choose $$b=s+\varepsilon_0/2\leq T-\varepsilon_0/2.$$
Smooth bounded geometry gives
$$H(z,b;y,s)\geq c(\varepsilon_0)>0.$$
The reproduction formula and $$\int_XH(x,t;z,b)dg_b=1,$$
then give
$$H(x,t;y,s)\geq c(\varepsilon_0).$$
The on-diagonal upper bound and the forward gradient estimate applied on the later half of $[s,t]$ give the corresponding upper and gradient bounds. The bounded Gaussian factors can then be inserted by changing constants. We shall use these two observations without further comment, in order to reduce all the statements below to the range in which $t-s$ is small compared with $\min\{s,T-s\}.$ \qed
\end{remark}

Recall Perelman's $\mathcal{W}$-functional (cf. \cite{Per02}):
$$\mathcal{W}[g,f,\tau]:=\int_M\big[\tau(|\nabla f|^2+R)+f-m\big](4\pi\tau)^{-\frac{m}{2}}e^{-f}dg.$$
Here $g$ is a Riemannian metric, $f\in C^1(M)$ and $\tau>0.$ Recall that the derived functionals
$$\mu[g,\tau]:=\inf\{\mathcal{W}[g,f,\tau]~:~f\in C^1(M)~\text{satisfying}~\int_M(4\pi\tau)^{-\frac{m}{2}}e^{-f}dg=1~\}$$
and
$$\nu[g,\tau]:=\inf_{0<\tau'\leq\tau}\mu[g,\tau'],~~~~\nu[g]:=\inf_{\tau>0}\nu[g,\tau]$$
satisfy $\nu[g,\tau],\nu[g]\leq0.$ Along the Ricci flow, the quantities
$$\mu[g(t),\tau-t],\qquad\nu[g(t),\tau-t],\qquad\nu[g(t)]$$
are nondecreasing wherever they are defined, with $\tau-t>0$ for the first two quantities.

Next, we discuss heat equations coupled with the Ricci flow. The forward heat equation
\begin{equation}\label{heateqa}
  \begin{cases}
  \frac{\partial}{\partial t}g(x,t)&=-2Ric(g(x,t))\\
  \frac{\partial}{\partial t}u(x,t)&=\Delta_{x,t}u(x,t)
  \end{cases}
\end{equation}
has a conjugate,
\begin{equation}\label{cheateqa}
  \begin{cases}
  \frac{\partial}{\partial t}g(x,t)&=-2Ric(g(x,t))\\
  \frac{\partial}{\partial t}u(x,t)&=-\Delta_{x,t}u(x,t)+R(x,t)u(x,t)
  \end{cases}
\end{equation}
which evolves backwards in time. The following differential Harnack inequality for a positive solution $u\in C^{\infty}(M\times(0,T))$ of the heat equation (\ref{heateqa}) is obtained in \cite[Theorem 3.3]{ZhQ06} or \cite[Theorem 5.1]{CaoHam09}:
\begin{equation}\label{diffhar1}
\frac{|\nabla u(x,t)|}{u(x,t)}\leq\sqrt{\frac{1}{t}}\sqrt{\log\frac{J}{u(x,t)}}
\end{equation}
for $(x,t)\in M\times(0,T],$ where $J:=\sup_{M\times(0,t]}u$. This bound does not assume an upper bound on the scalar curvature. For the heat kernel we apply it after shifting the initial time to a positive distance from its pole, so that $J<\infty$. In particular, the following interpolation inequality holds for any $\delta>0$, $x,~y\in M$ and $0<t\leq T:$
\begin{equation}\label{diffhar2}
u(y,t)\leq c_1u(x,t)^{1/(1+\delta)}J^{\delta/(1+\delta)}\exp(c_2d_t(x,y)^2/t),
\end{equation}
where $c_1,c_2$ are positive constants depending only on $\delta.$

For any two compactly supported functions $u,v\in C^{\infty}(M\times(0,T))$ we have
$$\int_0^T\int_M\left(\frac{\partial}{\partial t}-\Delta_{x,t}\right)u(x,t)\cdot vdg_t(x)dt=\int_0^T\int_Mu\cdot\left(-\frac{\partial}{\partial t}-\Delta_{x,t}+R(x,t)\right)v(x,t)dg_t(x)dt.$$
We will denote by $H(x,t;y,s),$ where $x,y\in M,0\leq s<t<T,$ the heat kernel associated with the heat equation (\ref{heateqa}), meaning that for any fixed $(y,s)\in M\times[0,T)$
\begin{equation}\label{heatkernel}
\left(\frac{\partial}{\partial t}-\Delta_{x,t}\right)H(x,t;y,s)=0~~\text{and}~~\lim_{t\searrow s}H(x,t;y,s)=\delta_y(x).
\end{equation}
Then $H(x,t;y,s)$ is the kernel associated to the conjugate heat equation (\ref{cheateqa}), meaning that for any fixed $(x,t)\in M\times[0,T)$
\begin{equation}\label{cheatkernel}
\left(-\frac{\partial}{\partial s}-\Delta_{y,s}+R(y,s)\right)H(x,t;y,s)=0~~\text{and}~~\lim_{s\nearrow t}H(x,t;y,s)=\delta_x(y).
\end{equation}
Lastly, we recall two heat kernel bounds from \cite{ZhQ12}; see also \cite[Theorems 7.1 and 7.5]{Bam20} and \cite[Theorem 2.1]{CaoZh11}. The first is a global on-diagonal upper bound. Assume that $x,y\in M$, $0\leq s<t<T$, and $R\geq-R_0$ on $M\times[s,t]$. Then
\begin{equation}\label{ondiagheatub}
H(x,t;y,s)\leq \frac{C}{(t-s)^{\frac{m}{2}}},
\end{equation}
where $C$ only depends on $\nu[g_0,2T],~n,$ and $(t-s)R_0.$

The second estimate is a distance dependent lower bound on the heat kernel. This bound is a consequence of (\ref{diffhar1}),(\ref{diffhar2}) and Perelman's Harnack inequality. Let $x,y\in M,$ $0\leq s<t<T$ and consider a smooth curve $\gamma:[s,t]\to M$ between $(y,s)$ and $(x,t),$ i.e. $\gamma(s)=y$ and $\gamma(t)=x.$ Its $\mathcal{L}$-length is defined as
$$\mathcal{L}(\gamma):=\int_s^t\sqrt{t-\tau}(|\gamma'(\tau)|^2_{\tau}+R(\gamma(\tau),\tau))d\tau.$$
The reduced distance between $(x,t)$ and $(y,s)$ is defined as
$$\ell_{x,t}(y,s):=\frac{1}{2\sqrt{t-s}}\inf\left\{\mathcal{L}(\gamma)|\gamma:[s,t]\to M~\text{between}~(y,s)~\text{and}~(x,t)\right\}.$$
Then by \cite[Corollary 9.5]{Per02}, we have
\begin{equation}\label{heatkelb1}
H(x,t;y,s)\geq\frac{1}{(4\pi(t-s))^{\frac{m}{2}}}\exp\Big(-\ell_{x,t}(y,s)\Big).
\end{equation}
Applying (\ref{heatkelb1}) for $x=y$ at time $t$ yields
\begin{equation}\label{heatkelb2}
H(x,t;x,s)\geq\frac{1}{(4\pi(t-s))^{\frac{m}{2}}}\exp\left(-\frac{1}{2\sqrt{t-s}}\int_s^t\sqrt{t-\tau}R(x,\tau)d\tau\right).
\end{equation}
Put $a=(s+t)/2$. On the later interval $[a,t]$, the on-diagonal bound gives
$$J:=\sup_{M\times[a,t]}H(\cdot,\cdot;y,s)\leq C(t-s)^{-m/2}.$$
Apply \eqref{diffhar2}, with $\delta=1$, to this interval, whose elapsed time is $(t-s)/2$, and combine it with \eqref{heatkelb2} at $y$. After enlarging the constants, this yields
\begin{equation}\label{heatke3}
H(x,t;y,s)\geq\frac{c_3}{(t-s)^{m/2}}
\exp\left(-C\frac{d_t(x,y)^2}{t-s}
-\frac{1}{\sqrt{t-s}}\int_s^t\sqrt{t-\tau}R(y,\tau)d\tau\right),
\end{equation}
which is our desired lower bound, $c_3$ only depends on $\nu[g_0,2T],~n,$ and $(t-s)R_0.$ We point out that the scalar curvature integral in (\ref{heatke3}) is taken along the worldline of the pole $(y,s)$ and not along that of $x;$ the heat kernel is not symmetric under the exchange of its two spatial variables at different times. In particular, if the scalar curvature satisfies $R\leq\frac{C}{T-t}$ on $X\times[0,T),$ then
$$\frac{1}{\sqrt{t-s}}\int_s^t\frac{\sqrt{t-\tau}}{T-\tau}\,d\tau\leq2.$$
Thus, by (\ref{heatke3}), after changing the constants, which now also depend on the type-I constant, we have
\begin{equation}\label{heatke4}
H(x,t;y,s)\geq\frac{c_3}{(t-s)^{\frac{m}{2}}}\exp\Big(-2c_2\frac{d_t(x,y)^2}{t-s}\Big)
\end{equation}
for any $x,y\in X$ with $[s,t]\subset[0,T).$

Recently, Guo-Phong-Song-Sturm in \cite{GPSS24-1,GPSS23} made significant progress in studying the geometry of K\"ahler metrics (see also \cite{ZhZh26+}). They proved the following estimates, which will be used frequently in the later sections.
\begin{theorem}{\rm (Theorem 2.1 and Section 8 in \cite{GPSS24-1})}\label{GPSS22}
Under the assumption of {\rm Theorem \ref{localdistortion}}, there exist uniform constants $\theta=\theta(n)>2n,$ $C_1=C_1(X,\omega_0,n)$ and $C_2=C_2(X,\omega_0,n,\theta)$ such that
\begin{align}\label{diameterbd}
diam(X,\omega(t))&\leq C_1\\\label{noncollapse2}
\frac{|B(x,r,t)|_t}{[\omega(t)^n]}&\geq C_2r^{\theta}
\end{align}
for any $x\in X$ and $r\in(0,1].$
\end{theorem}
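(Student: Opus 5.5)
This is a citation of \cite[Theorem 2.1 and Section 8]{GPSS24-1}. The plan is to check that the family $\{\omega(t)\}_{t\in[0,T)}$ satisfies the hypotheses of the Guo--Phong--Song--Sturm theorem uniformly in $t$. That theorem applies to K\"ahler metrics $\omega\in[\chi]$ and needs three inputs:
\begin{itemize}
\item[(1)] the class $[\chi]$ ranges over a set of classes bounded with respect to a fixed reference form $\omega_0$;
\item[(2)] the normalized volume $V_\chi=[\chi]^n$ is bounded below by a positive constant;
\item[(3)] for some $p>n$, the $p$-th Nash-type entropy $\int_X\big|\log\frac{\omega^n}{V_\chi\,\omega_0^n}\big|^p\frac{\omega^n}{V_\chi}$ is uniformly bounded.
\end{itemize}
Its conclusions are a diameter bound and a volume lower bound $|B(x,r)|/V_\chi\geq c\,r^{\theta}$, with $\theta$ depending only on $n$ (and $\theta>2n$) and $c$ depending on the above bounds.

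For (1), I would fix a smooth representative $\eta\in 2\pi c_1(X)$, for instance $\eta=\Ric(\omega_0)$, and write $\omega(t)=\hat\omega_t+\sqrt{-1}\partial\bar\partial\varphi(t)$ with $\hat\omega_t=\omega_0-t\eta$. The forms $\hat\omega_t$, $t\in[0,T]$, lie in a compact family, so $\hat\omega_t\leq C\omega_0$.

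For (2), $[\omega(t)]^n$ is a polynomial in $t$ that tends to $\int_X\beta^n>0$ as $t\to T$ and is positive on $[0,T)$. By continuity it is bounded below uniformly on $[0,T)$, which is exactly where the noncollapsing assumption enters.

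For (3), I would use the parabolic Monge--Amp\`ere formulation $\partial_t\varphi=\log\frac{\omega(t)^n}{\Omega}$, with $\Omega$ a smooth volume form satisfying $\Ric(\Omega)=\eta$. The standard estimates of Tian--Zhang, Zhang and Song--Tian for finite-time flows give $\varphi\leq C$ and $\partial_t\varphi\leq C$ uniformly on $[0,T)$. The latter is obtained by the maximum principle applied to $t\dot\varphi-\varphi-nt$. Hence $\omega(t)^n\leq C\Omega$, so the density $F_t=\omega(t)^n/(V_t\omega_0^n)$ is bounded above. Since $x|\log x|^p$ is bounded on $(0,C]$, the entropy $\int F_t|\log F_t|^p\omega_0^n$ is uniformly bounded for every $p$, and in particular for some $p>n$.

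With (1)--(3) in hand, \cite{GPSS24-1} yields \eqref{diameterbd} and \eqref{noncollapse2} for all $r\in(0,1]$. The only remaining point is bookkeeping: converting between $g_{\omega(t)}$ and $g(t)=2g_{\omega(t)}$ and between $\omega^n/n!$ and $\omega^n$. This changes the constants only by factors depending on $n$, and radii are rescaled by $\sqrt2$, which is absorbed into $C_2$ since $r\le1$.

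I expect the main obstacle to be step (3), the uniform upper bound on $\partial_t\varphi$ up to the singular time, since it is the one genuinely flow-dependent input. Even so, it is classical and needs no bigness. The conditions (1) and (2) are immediate from the cohomological description of $[\omega(t)]$.
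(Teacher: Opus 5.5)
Your checks of the class bound, the volume lower bound and the $p$-entropy bound are correct. In particular, the maximum principle for $t\dot\varphi-\varphi-nt$ does give $\dot\varphi\leq C$, hence $\omega(t)^n\leq C\Omega$.

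\textbf{The gap.} Your list of hypotheses for \cite{GPSS24-1} is incomplete. The diameter and volume-noncollapsing theorem there is proved for K\"ahler metrics that satisfy two further conditions besides the class bound and the $p$-entropy bound ($p>n$). They must also satisfy a \emph{lower} bound on the normalized volume form,
\[
\frac{\omega^n}{V_\omega\,\omega_0^n}\geq\gamma,
\]
where $\gamma\geq0$ is a fixed continuous function whose zero set has Hausdorff dimension $<2n-1$. This is the ``small degeneracy'' condition, and it is the main technical point of that paper.

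Nothing in your step (3) addresses this condition, since an upper bound on $\dot\varphi$ controls the entropy but gives no lower bound on $\omega(t)^n$. The condition is also not automatic. Near $\mathrm{Null}(\alpha)$ the volume form genuinely degenerates as $t\to T$, for instance along a divisor that is being contracted.

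So, as written, you apply the citation without verifying one of its hypotheses. Your remark that bigness enters only through (2) is also wrong: bigness is exactly what produces $\gamma$. If you intend instead a later result in which this assumption is removed, you should say so and check that it also yields \eqref{noncollapse2}.

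\textbf{How to supply the missing step.} Since $\alpha$ is nef with $\alpha^n>0$, it is big. It therefore contains a K\"ahler current with analytic singularities,
\[
\hat\omega_T+\sqrt{-1}\partial\bar\partial\psi\geq c\,\omega_0,\qquad \sup_X\psi=0,\qquad \hat\omega_T=\omega_0-T\eta.
\]
Put $\Phi:=(T-t)\dot\varphi+\varphi+nt$. Using $\partial_t\hat\omega_t=(\hat\omega_T-\omega_0)/T$, one computes
\[
(\partial_t-\Delta)\Phi=\tr_{\omega}\hat\omega_T,\qquad (\partial_t-\Delta)\dot\varphi=T^{-1}\tr_\omega(\hat\omega_T-\omega_0)\geq -C\,\tr_\omega\omega_0 .
\]
Hence, for $B$ large, the function $Q:=\dot\varphi+B(\Phi-\psi)$ satisfies $(\partial_t-\Delta)Q\geq(Bc-C)\tr_\omega\omega_0>0$ away from the polar set of $\psi$, and $Q\to+\infty$ near that set.

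The minimum principle then gives $Q\geq-C$. Combined with $\varphi\leq C$, this yields $\dot\varphi\geq B\psi-C$, that is, $\omega(t)^n\geq c\,e^{B\psi}\Omega$.

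Since $V_t$ is bounded above, $\gamma:=c'e^{B\psi}$ is an admissible choice. It is continuous and vanishes only on a proper analytic subset, of real dimension at most $2n-2$. With this fourth input, your application of \cite{GPSS24-1} goes through.
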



Only the finiteness of the exponent $\theta$ is needed in the later packing argument.
The Sobolev inequality is a powerful tool that conveys deep analytical and geometric information, such as non-collapsing and isoperimetric estimates. In this context, we strengthen the monotonicity of Perelman's W-entropy to derive a uniform Sobolev inequality along the K\"ahler-Ricci flow. 

\begin{theorem}{\rm (Uniform Entropy Estimates)}\label{ZhZh22}
Under the assumption of {\rm Theorem \ref {localdistortion}}, along the K\"ahler-Ricci flow \eqref{KRF} we have the uniform entropy estimate
\begin{align}\label{u-lowerbd}
\mu(g(t),\tau)\geq\log vol_{g(t)}(X,g(t))-C(n,\omega_0),~\forall~0<\tau\leq1,
\end{align}
and the uniform Sobolev inequality
\begin{align}\label{Soboleviq}
\Big(\int_X|f|^{\frac{2n}{n-1}}dg_t\Big)^{\frac{n-1}{n}}\leq C_S\cdot\int_X\Big(|\nabla f|^2+(R(t)+R_0+1)f^2\Big)dg_t,~~f\in C^{\infty}(X,\mathbb{R})
\end{align}
where $R_0$ is the constant of {\rm Lemma \ref{scalarlbunif}} and $C_S$ is a uniform constant which depends only on $\omega_0,n.$
\end{theorem}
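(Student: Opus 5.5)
The plan is to deduce both estimates of Theorem \ref{ZhZh22} from one uniform lower bound for Perelman's $\mu$-functional at a fixed scale, obtained from the noncollapsing geometry (\ref{diameterbd})--(\ref{noncollapse2}) and Perelman's monotonicity.

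\emph{Step 1: uniform entropy.} By monotonicity of $\mu[g(t),\tau-t]$, for every $t<T$ and $0<\tau\le1$,
$$\mu(g(t),\tau)\geq\mu(g(0),t+\tau)\geq\nu[g_0,T+1]>-\infty,$$
using compactness of $X$ and smoothness of $g_0$. Since $vol_{g(t)}(X)=2^n[\omega(t)]^n/n!$ converges to a positive multiple of $\int_X\beta^n>0$ as $t\to T$, and is bounded above and below on $[0,T)$, the term $\log vol_{g(t)}$ is bounded by a constant depending on $n,\omega_0$. Hence (\ref{u-lowerbd}) holds with $C=-\nu[g_0,T+1]+\sup_t\log vol_{g(t)}$. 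The point of writing the bound with $\log vol$ is that it is the natural scale-invariant normalization. Finiteness of $\nu[g_0,2T]$, already used in Section 2, is exactly this input.

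\emph{Step 2: log-Sobolev to Sobolev.} Writing $u^2=(4\pi\tau)^{-n}e^{-f}$ with $\|u\|_{L^2}=1$, the bound $\mu(g(t),\tau)\ge -A$ for all $\tau\in(0,1]$ becomes the family of log-Sobolev inequalities
$$\int u^2\log u^2\,dg_t\le \tau\int(4|\nabla u|^2+Ru^2)\,dg_t+A-n\log(4\pi\tau)-2n.$$
Here $m=2n$ is the real dimension. Replacing $R$ by $R+R_0+1\ge1$ (Lemma \ref{scalarlbunif}) makes the quadratic form $Q(u)=\int(|\nabla u|^2+(R+R_0+1)u^2)$ coercive. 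I would then follow the standard Davies route, as in Ye and Zhang for Ricci flow.
\begin{enumerate}
\item A log-Sobolev family valid for $\tau\le1$, together with the mass term, gives ultracontractivity $\|e^{-sL}\|_{1\to\infty}\le Cs^{-n}$ for all $s>0$, where $L=-\Delta+R+R_0+1$. Large $s$ is handled by the spectral gap $L\ge1$.
\item By the Varopoulos theorem, this semigroup bound with exponent $n=m/2$ is equivalent to
$$\|u\|_{2m/(m-2)}^2\le C_S\,Q(u).$$
\end{enumerate}
Since $m=2n$, the exponent $\frac{2m}{m-2}=\frac{2n}{n-1}$ and the power $\frac{n-1}{n}$ match (\ref{Soboleviq}). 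This requires $n\geq2$, as the conventions guarantee.

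\emph{Main obstacle.} The hard part is keeping $C_S$ uniform as $t\to T$, since $R$ is unbounded. Using $R$ itself inside the quadratic form avoids any need to bound it, but we must check that the Davies argument uses only $R+R_0+1\ge1$ and never an upper bound. The delicate point is the derivative of $\log\|e^{-sL}u\|_p$: terms $\int R\,v^p$ arise, and they must be absorbed by the potential with the correct sign. I would verify this first, following Zhang's uniform Sobolev inequality along Ricci flow, which is exactly this statement with $A$ from Step 1.
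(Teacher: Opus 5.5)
Your proposal is correct and follows essentially the paper's proof: Step 1 is the same monotonicity argument $\mu(g(t),\tau)\geq\mu(g_0,t+\tau)\geq\nu[g_0,T+1]$ combined with the upper volume bound, and Step 2 is the classical Davies--Varopoulos passage from the log-Sobolev family to the Sobolev inequality that the paper invokes as \cite[Theorem 6.2.1]{ZhQ16}. One correction to your opening sentence: the noncollapsing bounds \eqref{diameterbd}--\eqref{noncollapse2} are not needed in Step 1. Apart from that, the only difference is cosmetic: you run the semigroup argument directly for $-\Delta+R+R_0+1$, whereas the paper first obtains the bound $A\int(4|\nabla f|^2+Rf^2)+B\int f^2$ and then absorbs $R$ using $R+R_0\geq0$.
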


Note that $R+R_0+1\geq1$ on $X\times[0,T),$ and that \eqref{Soboleviq} is invariant under the parabolic rescaling $\omega\mapsto r^{-2}\omega$ for $r\leq1$ (the zero order coefficient $R_0+1$ being replaced by $r^2(R_0+1)$).

\begin{proof}
{\bf Step 1:} Since $T<\infty$ and $\omega_0$ is a fixed K\"ahler metric on the compact manifold $X,$ Perelman's $\nu$-functional satisfies $\nu[g_0,T+1]=:-C_0>-\infty.$ By the monotonicity of the $\mu$-functional along the Ricci flow we have, for every $t\in[0,T)$ and every $0<\tau\leq1,$
$$\mu(g(t),\tau)\geq\mu(g_0,\tau+t)\geq\nu[g_0,T+1]=-C_0,$$
since $\tau+t\leq T+1.$ Note that this step does not use the noncollapsing assumption. On the other hand, we have
$$vol_{g(t)}(X,g(t))=\frac{2^n}{n!}[\omega(t)]^n=\frac{2^n}{n!}\big([\omega_0]-2\pi tc_1(X)\big)^n\leq C_1$$
and the desired estimate (\ref{u-lowerbd}) follows. We stress that under the assumption of Theorem \ref{localdistortion} the volume is in addition bounded from below: the polynomial $t\mapsto\big([\omega_0]-2\pi tc_1(X)\big)^n$ is positive on $[0,T),$ being the volume of a K\"ahler class, and positive at $t=T$ by assumption. Hence by continuity, we have
$$vol_{g(t)}(X,g(t))\geq\frac{2^n}{n!}\min_{0\leq t\leq T}\big([\omega_0]-2\pi tc_1(X)\big)^n>0.$$
Hence, (\ref{u-lowerbd}) is in fact equivalent to the uniform entropy bound $\mu(g(t),\tau)\geq-C$ for $0<\tau\leq1.$

{\bf Step 2:} By the classical argument (see \cite[Theorem 6.2.1]{ZhQ16}), the logarithmic Sobolev inequalities $\mu(g(t),\tau)\geq-C,~0<\tau\leq1,$ imply a Sobolev inequality of the form
$$\big(\int_X|f|^{\frac{2m}{m-2}}dg_t\big)^{\frac{m-2}{m}}\leq A\int_X\big(4|\nabla f|^2+Rf^2\big)dg_t+B\int_Xf^2dg_t$$
with constants $A,B$ depending only on $C$ and $m,$ where the Laplacian and the scalar curvature are those of the Ricci flow (\ref{heateqa}). Since $R+R_0\geq0$ by Lemma \ref{scalarlbunif}, then the right hand side is bounded by
$$\max\{4A,A,B\}\int_X\big(|\nabla f|^2+(R+R_0+1)f^2\big)dg_t,$$
and (\ref{Soboleviq}) follows after translating into the K\"ahler conventions ($m=2n,$ so that $\frac{2m}{m-2}=\frac{2n}{n-1}$). 
\end{proof}
\begin{remark}\label{nashentropyremark}
Step 1 above uses in an essential way that $T<\infty.$ For a long time solution of the K\"ahler-Ricci flow, when the canonical bundle is semiample or nef, the quantity $\nu[g_0,T+1]$ is of no use and one has to argue differently \cite{ZhZhl22}. Let us only indicate the identities which are relevant in that case, in the notation of \cite{Bam20}. For a conjugate heat kernel measure
$$d\nu_{x,t;t-\tau}=(4\pi\tau)^{-n}e^{-f}dg_{t-\tau}$$ based at $(x,t),$
the pointed Nash entropy
$$\mathcal{N}_{x,t}(\tau)=\int_Xf\,d\nu_{x,t;t-\tau}-n$$
and the entropy
$$\mathcal{W}_{x,t}(\tau)=\int_X\big[\tau(R+|\nabla f|^2)+f-2n\big]d\nu_{x,t;t-\tau}$$ satisfy, by \cite[Proposition 5.2]{Bam20} (see also \cite{CMZ22}),
$$\frac{d}{d\tau}\mathcal{W}_{x,t}(\tau)\leq0,~~~~\frac{d}{d\tau}\big(\tau\mathcal{N}_{x,t}(\tau)\big)=\mathcal{W}_{x,t}(\tau),~~~~\text{i.e.}~~\frac{d}{d\tau}\mathcal{N}_{x,t}(\tau)=\frac{\mathcal{W}_{x,t}(\tau)-\mathcal{N}_{x,t}(\tau)}{\tau},$$
together with
$$\mathcal{N}_{x,t}(0)=0,~~\mathcal{N}_{x,t}(\tau)\leq0~ \text{and}~\mathcal{N}_{x,t}(\tau)\geq\mu[g_{t-\tau},\tau].$$
Combining these identities with Bamler's gradient estimate for the pointed
Nash entropy \cite[Corollary 5.11]{Bam20}, Bamler's upper volume bound for
distance balls \cite[Theorem 8.1]{Bam20}, and the diameter bound
\eqref{diameterbd}, we can bound $\mu$ from below in terms of the volume in
the long-time case; we refer to \cite[Theorem 3.4]{ZhZhl22} for the details, which will not be needed here. \qed
\end{remark}

We end this section with the following ultracontractivity property of positive solutions to the heat equation (\ref{heateqa}) which is obtained in \cite{ZhZhl22}.
\begin{theorem}{\rm(Theorem 3.6 in \cite{ZhZhl22})}\label{ZhZl21}
Let $q>1$ be any constant. Let $(M,g(t))$ be a solution of Ricci flow on a compact Riemannian manifold of dimension $m$ on $M\times[0,t_0].$ Assume that
\begin{equation}\label{mu0lb}
\mu(g(0),\tau)\geq-A,~~\forall~0<\tau\leq t_0
\end{equation}
for some $A\geq0,$ and that $R\geq-R_0$ on $M\times[0,t_0]$ for some $R_0\geq0.$ Then for any nonnegative solution $u(t)$ of heat equation \eqref{heateqa},
\begin{equation}
||u(t_0)||_{L^{\infty}}\leq C(m)\cdot e^{A+R_0t_0}\cdot t_0^{-\frac{m}{2q}}\cdot||u(0)||_{L^q(g(0))}.
\end{equation}
\end{theorem}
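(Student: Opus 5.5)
The plan is to derive the ultracontractive bound from Perelman's entropy via a time-dependent-exponent (Davies-type) argument. The hypothesis \eqref{mu0lb} is a family of log-Sobolev inequalities at time $0$. Monotonicity of $\mu$ along the flow transports it to every later time: for $t\in[0,t_0]$ and $\tau\le t_0-t$ we have $\mu(g(t),\tau)\geq\mu(g(0),\tau+t)\ge -A$. Written with $v^2=(4\pi\tau)^{-m/2}e^{-f}$, this gives the log-Sobolev inequality
$$\int v^2\log v^2\,dg_t\le \tau\int(4|\nabla v|^2+Rv^2)\,dg_t-\Big(A+\frac m2\log(4\pi\tau)+m\Big)$$
for $\|v\|_{L^2(g(t))}=1$, valid for all $0<\tau\le t_0-t$.

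Next, fix $u\ge0$ solving \eqref{heateqa} and a smooth increasing exponent $p(t)$ with $p(0)=q$ and $p(t)\to\infty$ as $t\to t_0$. Set $F(t)=\log\|u(t)\|_{L^{p(t)}(g(t))}$ and differentiate, using $\partial_t dg_t=-R\,dg_t$ and $\partial_t u=\Delta u$. This yields
$$\frac{d}{dt}F=\frac{p'}{p^2}\int \frac{u^p}{\|u\|_p^p}\log\frac{u^p}{\|u\|_p^p}-\frac{4(p-1)}{p^2}\int|\nabla u^{p/2}|^2\Big/\|u\|_p^p-\frac1p\int R\,\frac{u^p}{\|u\|_p^p}.$$
Apply the log-Sobolev inequality with $v=u^{p/2}/\|u\|_p^{p/2}$ and $\tau=\tau(t)$ chosen so that $\frac{p'}{p^2}\cdot4\tau=\frac{4(p-1)}{p^2}$, i.e.\ $\tau=(p-1)/p'$. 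The gradient terms then cancel. The $R$ terms combine to $\big(\frac{p'\tau}{p^2}-\frac1p\big)\int Rv^2=-\frac{1}{p^2}\int Rv^2$. Since $R\ge -R_0$, this term is at most $R_0/p^2$, and integrating it contributes at most $R_0t_0$. That accounts for the factor $e^{R_0t_0}$.

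The choice of $p(t)$ must make the remaining terms integrable. A natural choice is $\frac1{p(t)}=\frac1q\big(1-\frac t{t_0}\big)$, which gives $p'/p^2=1/(qt_0)$. Then $\tau=(p-1)/p'\le p/p'=p^{-1}qt_0\cdot p^{\,0}$, more precisely $\tau\le (t_0-t)$ up to the factor $(p-1)/p\le1$. This matches exactly the admissible range $\tau\le t_0-t$ coming from monotonicity; checking this compatibility is the main point. Integrating over $[0,t_0]$:
\begin{itemize}
\item the constant $A+m$ contributes $\int_0^{t_0}\frac{p'}{p^2}(A+m)=\frac{A+m}{q}\le A+m$;
\item the term $-\frac m2\frac{p'}{p^2}\log(4\pi\tau)$ has $\tau\approx t_0-t$, and $\int_0^{t_0}\frac{1}{qt_0}\log\frac{1}{t_0-t}\,dt$ is finite, equal to $\frac1q(1-\log t_0)$.
\end{itemize}
Together these give the power $t_0^{-m/(2q)}$ and constants depending only on $m$ (the $1/q\le1$ factor is absorbed, since $e^{A/q}\le e^{A}$ for $A\ge0$). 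Letting $t\to t_0$, $\|u(t)\|_{L^{p(t)}}\to\|u(t_0)\|_{L^\infty}$ by continuity, which gives the claimed bound.

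The main obstacle is the near-endpoint behaviour. As $t\to t_0$ we have $\tau\to0$ and $p\to\infty$. We must check that $\log(p-1)$-type corrections, coming from $\tau=(p-1)/p'$ rather than $p/p'$, and the $\log 4\pi$ terms remain integrable against $p'/p^2$ and bounded by $C(m)$ independently of $q$. I would first try the explicit computation above and, if the $q$-dependence is awkward, replace $q$ by $\max\{q,2\}$ via Hölder on the compact manifold.
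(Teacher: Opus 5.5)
Your proposal is correct and follows the route of the cited result: the paper does not reprove it but quotes \cite[Theorem 3.6]{ZhZhl22}, a Davies-type variable-exponent argument in the spirit of Qi Zhang's ultracontractivity, built from the log-Sobolev inequalities transported from $\mu(g(0),\cdot)\geq-A$ by Perelman's monotonicity, the exponent $\frac1{p(t)}=\frac1q(1-\frac t{t_0})$ and $\tau=\frac{p-1}{p'}=(1-\frac1p)(t_0-t)\le t_0-t$, which is exactly why only $R\geq-R_0$ enters and why the constant survives $q\searrow1$. Two small fixes: the entropy bound reads $\int v^2\log v^2\le\tau\int(4|\nabla v|^2+Rv^2)+A-m-\frac m2\log(4\pi\tau)$ (your display has the sign of $A$ reversed), and the endpoint term you flagged is harmless, since $\frac1{t_0}\int_0^{t_0}\log\big(1-\frac1{p(t)}\big)\,dt=\int_0^1\log(1-\frac sq)\,ds\in[-1,0]$ for $q>1$, so the constant is uniform in $q$ and your H\"older fallback is not needed (it would not work anyway: for $q<2$ one cannot bound $\|u(0)\|_{L^2}$ by $\|u(0)\|_{L^q}$, and it would introduce volume factors).
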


In particular, letting $q\searrow1$ (the constant in the proof remains bounded, see \cite[Theorem 3.6]{ZhZhl22}), we recover Zhang Qi's ultracontractivity. Note that only the lower bound $R\geq-R_0$ is used. Along the K\"ahler-Ricci flow \eqref{KRF} the hypotheses are satisfied, by Theorem {\rm\ref{ZhZh22}} and Lemma {\rm\ref{scalarlbunif}}, and we obtain the on-diagonal bound \eqref{ondiagheatub} for all $0\leq s<t<T.$ For time intervals longer than one, the same monotonicity argument uses $\nu[g_0,2T]$ instead of $\nu[g_0,T+1]$.

\section{Distance distortion estimates and the construction of cut-off function}\label{sec3}
In this section we first set up the Ricci potential along the K\"ahler-Ricci flow \eqref{KRF} and establish a Li-Yau type inequality for positive solutions of the heat equation inspired by Bamler-Zhang \cite{BamZh17}, Hallgren-Jian-Song-Tian \cite{HJST23}. This inequality is independent of the Ricci curvature, but it depends on the Ricci potential. We then prove Theorems \ref{localdistortion} and \ref{cutoffftn}. The key observation is that, at scales $r_0\lesssim T-t_0$, the type-II
scalar curvature bound \eqref{type2-scalar} implies the bounded
scalar curvature bound $R\lesssim r_0^{-2}$ on the corresponding time interval. Hence, at these scales, the
results of Bamler--Zhang \cite{BamZh17} apply after a parabolic rescaling.

\subsection{Set up and preliminary results} We consider the following setting:

Let $(X^n,\omega_0)$ be a compact K\"ahler manifold of dimension $n$ and $(Y^k,\omega_Y)$ be a compact connected K\"ahler manifold of dimension $k$, $k\in\{0,1,\cdots,n\}$. Assume in addition that $\pi:X\to Y$ is a holomorphic map which satisfies
\begin{equation}\label{omega0-omegay}
[\omega_0]-2\pi T c_1(X)=[\pi^*\omega_Y].
\end{equation}
Furthermore, we focus on the K\"ahler-Ricci flow (\ref{KRF}) with initial metric $\omega_0$ and (\ref{omega0-omegay}). We remark that under the noncollapsing assumption $\int_X\beta^n>0$ of Theorem \ref{localdistortion} the class $[\pi^*\omega_Y]$ is big, so that necessarily $k=n$ and $\pi$ is generically finite; the case $k<n$ corresponds to a finite time collapsing of the flow, which is not considered in this paper. The assumption (\ref{omega0-omegay}) is used here, in Theorem \ref{LiYauthm}, and in Proposition \ref{propY}$(b),(c)$; the proofs of Theorems \ref{localdistortion}--\ref{hke2} do not depend on it.

Recall that the K\"ahler-Ricci flow (\ref{KRF}) is equivalent to the following parabolic complex Monge-Amp\`ere equation
\begin{equation}\label{PCMA}
  \begin{cases}
  \frac{\partial\varphi}{\partial t}&=\log\frac{\big(\omega_t+\sqrt{-1}\partial\bar{\partial}\varphi\big)^n}{\Omega}\\
  \varphi(0)&=0
  \end{cases}
\end{equation}
where $\omega_t=\frac{1}{T}(T-t)\omega_0+\frac{t}{T}\pi^*\omega_Y$ and $\sqrt{-1}\partial\bar{\partial}\log \Omega=\frac{\pi^*\omega_Y-\omega_0}{T}.$ Then we have
\begin{equation}\label{riccip-ub}
|\varphi|\leq C,\qquad\frac{\partial\varphi}{\partial t}\leq C
\end{equation}
where $C$ is a uniform constant depending only on $n,\omega_0,\omega_Y$ (see \cite[Lemma 2.1]{SW13} or \cite[Lemma 3.6]{Zha23}).

Let $u_0(t)=\frac{\partial\varphi}{\partial t}+\frac{\varphi+nt}{T-t}$ be the Ricci potential. Then we have the following coupled equations
\begin{equation}\label{ricci-potential-equation}
  \begin{cases}
  \frac{\partial}{\partial t}u_0&=\Delta u_0+\frac{u_0}{T-t}+\frac{tr_{\omega(t)}\pi^*\omega_Y}{T-t}=-R(t)+\frac{u_0}{T-t}+\frac{n}{T-t}\\
  Ric(\omega(t))&=\frac{\omega(t)-\pi^*\omega_Y}{T-t}-\sqrt{-1}\partial\bar{\partial}u_0.
  \end{cases}
\end{equation}
Since $tr_{\omega(t)}\pi^*\omega_Y\geq0,$ the first equation gives $$(\partial_t-\Delta)u_0\geq\frac{u_0}{T-t}.$$
The maximum principle shows that $t\mapsto(T-t)\inf_Xu_0(\cdot,t)$ is nondecreasing. Together with (\ref{riccip-ub}) (note that $u_0(\cdot,0)=\log\frac{\omega_0^n}{\Omega}$ is bounded) we obtain
\begin{equation}\label{u0-2side}
-\frac{C_*}{T-t}\leq u_0\leq\frac{C_*}{T-t}~~~~\text{on}~~X\times[0,T)
\end{equation}
for a constant $C_*=C_*(n,\omega_0,\omega_Y)<\infty.$ We now define the normalized Ricci potential by
$$v(x,t):=u_0(x,t)+\frac{C_*}{T-t}+1.$$
Since $v$ and $u_0$ differ by a function of $t$ only, we have $$\sqrt{-1}\partial\bar{\partial}v=\sqrt{-1}\partial\bar{\partial}u_0.$$
So, the second equation in (\ref{ricci-potential-equation}) holds with $u_0$ replaced by $v,$ and a direct computation from the first equation in (\ref{ricci-potential-equation}), using $\frac{d}{dt}\frac{C_*}{T-t}=\frac{C_*}{(T-t)^2},$ gives
\begin{equation}\label{ricci-potential-equation1}
\left(\frac{\partial}{\partial t}-\Delta\right)v=\frac{v-1}{T-t}+\frac{tr_{\omega(t)}\pi^*\omega_Y}{T-t},
\end{equation}
while (\ref{u0-2side}) gives
\begin{equation}\label{v-lb-ub}
1\leq v\leq \frac{C}{T-t}.
\end{equation}
We stress that $v$ is smooth in space-time, since no infimum over $X$ is involved in its definition. Using the Ricci-potential \eqref{ricci-potential-equation} and the parabolic Schwarz Lemma, the maximum principle calculation in \cite[Theorem 1.5]{JST23} (see also \cite[(3.17)]{HJST23}), gives
$$\frac{|\Delta w|}{w}+\frac{|\nabla w|^2}{w}\leq\frac{C}{T-t},$$
where $$w=u_0-\inf_Xu_0(\cdot,t)+1.$$
In our setting, the auxiliary positive forms can be chosen as fixed multiples of $\omega_Y$. Thus the same calculation applies to these analytic ingredients.

Since $v\geq w>0$ by (\ref{u0-2side}) and $\nabla v=\nabla w,~\Delta v=\Delta w,$ we obtain the following gradient and Laplacian estimates
\begin{equation}\label{LiYau1}
\frac{|\Delta v|}{v}+\frac{|\nabla v|^2}{v}\leq \frac{C(n,\omega_0,\omega_Y)}{T-t}
\end{equation}
on $X\times[0,T),$ where the gradient and Laplacian are with respect to $\omega(t).$ Furthermore, taking the trace of the second equation in (\ref{ricci-potential-equation}) and using $tr_{\omega(t)}\pi^*\omega_Y\geq0,$ (\ref{LiYau1}) and (\ref{v-lb-ub}), we find
\begin{equation}\label{type2-scalar}
R\leq\frac{n}{T-t}+|\Delta v|\leq\frac{Cv}{T-t}\leq\frac{C}{(T-t)^2}~~~~\text{on}~~X\times[0,T),
\end{equation}
which recovers, in the present setting, the scalar curvature bound of \cite{ZhZ10}. 

\begin{remark}
Let $f(t)=(T-t)\big(\log(T-t)-1\big)$ and $\psi(x,t)=\varphi+(n-k)f(t)$, where $k$ is the dimension of $Y$. Then by Lemma \rm{3.21} in \cite{Zha23}, we have
\begin{equation}\label{ricci-potential-ublb}
\frac{-C}{T-t}\leq\frac{\partial}{\partial t}\psi\leq-(n-k)\log(T-t)+C,
\end{equation}
which gives another derivation of the two sided bound \rm{(\ref{u0-2side})}, namely $$-\frac{C}{T-t}\leq\frac{\partial\varphi}{\partial t}\leq C~~~ \text{and}~~|\varphi|\leq C.$$ \qed
\end{remark}
In the following subsection, we let $T=1.$

\subsection{Li-Yau type inequality}
Inspired by the recent work of Hallgren-Jian-Song-Tian \cite{HJST23}, we prove the following Li-Yau type inequality under the assumption (\ref{omega0-omegay}). It is not needed for the proofs of Theorems \ref{localdistortion}--\ref{hke2}. We include it here because it is the natural substitute. In the present type-II setting, for the Li-Yau type estimates of \cite{BamZh17} for Ricci flows with bounded scalar curvature.
\begin{theorem}\label{LiYauthm}
Assume \rm{(\ref{omega0-omegay})} and let $T=1.$ There exists $C=C(n,\omega_0,\omega_Y)<\infty$ such that the following statement holds. Let $0\leq t_0<t_1<1$ with $1-t_0\leq2(1-t_1).$ Then for any positive solution $u\in C^{\infty}(X\times[t_0,t_1])$ of the heat equation \rm{(\ref{heateqa})}, we have
\begin{equation}\label{liyau-ineq}
-\frac{\Delta u}{u}+\frac{1}{2}\cdot\frac{|\nabla u|^2}{u^2}\leq C\Big(\frac{1}{t-t_0}+\frac{v}{1-t_0}\Big)
\end{equation}
on $X\times(t_0,t_1].$
\end{theorem}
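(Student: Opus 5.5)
This is a maximum-principle argument for a Li--Yau quantity, following the scheme of Bamler--Zhang \cite{BamZh17} (their Li--Yau bound under bounded scalar curvature). The bounded-curvature input is replaced by the Ricci potential $v$ and the estimates \eqref{LiYau1}, \eqref{v-lb-ub}, in the manner of Hallgren--Jian--Song--Tian \cite{HJST23}. Write $f=\log u$, so $-\frac{\Delta u}{u}=-\Delta f-|\nabla f|^2$, and the left side of \eqref{liyau-ineq} is $-\Delta f-\frac12|\nabla f|^2$. Along the flow $(\partial_t-\Delta)f=|\nabla f|^2$. Using $Ric=\frac{\omega-\pi^*\omega_Y}{1-t}-\sqrt{-1}\partial\bar\partial v$ one gets
\[
(\partial_t-\Delta)|\nabla f|^2=-2|\nabla\nabla f|^2+2\langle\nabla f,\nabla|\nabla f|^2\rangle,
\]
while
\[
(\partial_t-\Delta)\Delta f=2\langle Ric,\nabla\nabla f\rangle+\Delta|\nabla f|^2 .
\]
The key point is that every Ricci term appears through the trace identity. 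The term $\frac{\omega}{1-t}$ contributes $\frac{\Delta f}{1-t}$. The term $\frac{\pi^*\omega_Y}{1-t}$ contributes a pairing controlled by Cauchy--Schwarz against $|\nabla\bar\nabla f|^2$, using $\tr_\omega\pi^*\omega_Y$, whose heat-type equation \eqref{ricci-potential-equation1} allows it to be absorbed via $v$. The term $\sqrt{-1}\partial\bar\partial v$ is handled by \emph{integrating by parts in the test quantity}: in place of $\langle\partial\bar\partial v,\nabla\bar\nabla f\rangle$, add a multiple of $\langle\nabla v,\nabla f\rangle$ and of $\Delta v$ to the test function.

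Concretely, I would consider
\[
F=(t-t_0)\Big(-\Delta f-\tfrac12|\nabla f|^2+a\langle\nabla v,\nabla f\rangle\cdot\tfrac{?}{}\Big)-C\,\frac{(t-t_0)v}{1-t_0}.
\]
More carefully, the plan is to take
\[
F=(t-t_0)\Big(-\Delta f-\tfrac12|\nabla f|^2-\frac{Bv}{1-t_0}\Big)
\]
and, if needed, a correction $\frac{A(t-t_0)}{1-t_0}\,\frac{|\nabla v|^2}{v}$ whose evolution is computed by the Bochner formula together with \eqref{LiYau1}. The steps are:

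(1) Compute $(\partial_t-\Delta)F$. Use $-2\langle\partial\bar\partial v,\nabla\bar\nabla f\rangle\le \varepsilon|\nabla\bar\nabla f|^2+\varepsilon^{-1}|\partial\bar\partial v|^2$. Bound $|\partial\bar\partial v|^2$ by $\frac{Cv}{(1-t)^2}$; this follows from the trace identity $\Delta v=n/(1-t)-\tr\pi^*\omega_Y/(1-t)-R$ plus the Schwarz-lemma bound $\tr_\omega\pi^*\omega_Y\le C$ and \eqref{LiYau1}. If a full Hessian bound on $v$ is unavailable, I would instead exploit the favorable sign of $|\nabla\bar\nabla f|^2$ together with $\partial\bar\partial v=\frac{\omega-\pi^*\omega_Y}{1-t}-Ric$, and add $\frac{B(t-t_0)}{1-t_0}v$ to absorb the $\frac{v}{1-t}$-type errors. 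This works because \eqref{ricci-potential-equation1} gives $(\partial_t-\Delta)v\ge\frac{v-1}{1-t}$, which yields a good-sign term $\frac{Bv}{(1-t)(1-t_0)}$.

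(2) Use $1-t_0\le2(1-t_1)$, so $1-t$ and $1-t_0$ are comparable on the whole interval; all $\frac1{1-t}$ can then be replaced by $\frac2{1-t_0}$.

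(3) At a first positive maximum of $F$, use $|\nabla\nabla f|^2\ge\frac1{2n}(\Delta f)^2$ and the gradient term $2\langle\nabla f,\nabla F\rangle=0$. This produces a quadratic inequality $0\le F-c F^2/(t-t_0)+C(t-t_0)\frac{v}{1-t_0}$, hence $F\le C$ plus the $v$ term, which is \eqref{liyau-ineq}. At $t=t_0$ we have $F\le0$.

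The main obstacle is step (1): controlling the cross term $\langle\sqrt{-1}\partial\bar\partial v,\nabla\bar\nabla f\rangle$ with only the bounds \eqref{LiYau1} on $|\nabla v|^2/v$ and $|\Delta v|/v$, without a full Hessian bound. I would first try the HJST device of subtracting $\frac12\cdot$ the reverse term, i.e.\ using $-\frac{\Delta u}{u}$ with coefficient $1$ against $\frac12|\nabla f|^2$. The slack $\frac12|\nabla f|^2$ yields an extra $|\nabla\nabla f|^2$ and $\langle\nabla f,\nabla(\cdot)\rangle$ structure. Then $\langle\partial\bar\partial v,\nabla\bar\nabla f\rangle$ can be rewritten, modulo $(\partial_t-\Delta)$ of $\langle\nabla v,\nabla f\rangle$, in terms of $|\nabla v||\nabla f|^3$-type products, which are absorbed using $|\nabla v|^2\le Cv/(1-t)$.
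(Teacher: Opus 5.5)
Your outline stops at the step that carries the proof, and the three devices you offer for it do not work. Once you expand $\Delta|\nabla f|^2$ by the Bochner formula, which you must do to obtain the term $|\nabla^2f|^2$ used in step (3), you get for $Q=-\Delta f-\frac12|\nabla f|^2$
\[
(\partial_t-\Delta)Q=-|\nabla^2f|^2+2\langle\nabla f,\nabla Q\rangle-2\Ric(\nabla f,\nabla f)-2\langle\Ric,\nabla^2f\rangle .
\]
After inserting $\Ric=\frac{\omega-\pi^*\omega_Y}{1-t}-\sqrt{-1}\partial\bar\partial v$, you must therefore control $\langle\sqrt{-1}\partial\bar\partial v,\nabla\bar\nabla f\rangle$. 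You must also control $\sqrt{-1}\partial\bar\partial v(\nabla f,\bar\nabla f)$, which is missing from your computation.

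\emph{First device.} The bound $|\partial\bar\partial v|^2\le Cv/(1-t)^2$ does not follow from the trace identity, which controls only $\Delta v$. Since $0\le\pi^*\omega_Y\le C\omega$, such a bound is equivalent to $|\Ric|\le C\sqrt v/(1-t)$. That is a Ricci curvature bound, and it is not available here; only $R$ is controlled.

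\emph{Second device.} Substituting $\partial\bar\partial v$ back in terms of $\Ric$ is circular. Adding $Bv$ only produces the good term $-\frac{B(t-t_0)(v-1+\tr_\omega\pi^*\omega_Y)}{(1-t)(1-t_0)}$, which contains no derivatives of $v$ and so cannot dominate $|\nabla\bar\nabla v|^2$.

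\emph{Third device.} Adding $\langle\nabla v,\nabla f\rangle$ does not reduce the cross term to $|\nabla v||\nabla f|^3$-products, because
\[
(\partial_t-\Delta)\langle\nabla v,\nabla f\rangle=-2\langle\nabla^2v,\nabla^2f\rangle+\frac{\langle\nabla v+\nabla\tr_\omega\pi^*\omega_Y,\nabla f\rangle}{1-t}+\langle\nabla v,\nabla|\nabla f|^2\rangle .
\]
This contains the full real Hessian of $v$, including its $(2,0)$-part, and also $\nabla\tr_\omega\pi^*\omega_Y$. Neither is bounded by \eqref{LiYau1}.

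The missing idea is that second derivatives of $v$ can only be paid for by good terms in evolution equations. Along any Ricci flow,
\[
(\partial_t-\Delta)|\nabla v|^2=-2|\nabla^2v|^2+2\langle\nabla v,\nabla(\partial_t-\Delta)v\rangle ,
\]
and the parabolic Schwarz lemma gives $(\partial_t-\Delta)\tr_\omega\pi^*\omega_Y\le-C^{-1}|\nabla\tr_\omega\pi^*\omega_Y|^2+C$.

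So a by-hand test function needs corrections such as $A(t-t_0)|\nabla v|^2$ and $\frac{A'(t-t_0)}{1-t_0}\tr_\omega\pi^*\omega_Y$. Note that $|\nabla v|^2\le\frac{2Cv}{1-t_0}$ has the size of the right-hand side of \eqref{liyau-ineq}. Your weight $\frac{1}{1-t_0}\frac{|\nabla v|^2}{v}$ is only bounded by $\frac{C}{(1-t_0)^2}$, so it is not the right correction. The remaining errors would then have to be absorbed by $\frac{t-t_0}{2n}(\Delta f)^2$, using that $Q\gtrsim\frac{Bv}{1-t_0}$ at a positive maximum once $B\gg A,A'$.

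The paper avoids this computation altogether:
\begin{itemize}
\item It rescales by $1-t_*$ with $t_*=\frac{1+t_0}{2}$, so that $[t_0,t_1]$ becomes $[-1,\hat t_1]\subset[-1,0]$.
\item It checks the hypotheses of \cite[Theorem 3.6]{HJST23} for $v$: $\frac{|\hat\Delta v|+|\hat\nabla v|^2}{v}\le C$, $R_{\hat\omega}\le Cv$, $\frac{|\partial_{\hat t}v|}{v}\le C$, and the Schwarz lemma with its gradient term.
\item It applies that theorem with the fixed initial time $-1$, so the constant is uniform, and then scales back.
\end{itemize}
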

\begin{proof}
Set $t_*:=\frac{1+t_0}{2},$ so that $t_1\leq t_*<1$ by assumption, and consider the type-I rescaled flow
$$\hat{\omega}(\hat{t}):=\frac{1}{1-t_*}\,\omega\big((1-t_*)\hat{t}+t_*\big),~~~~\hat{t}\in[-\tfrac{t_*}{1-t_*},0],$$
which is the rescaling used in \cite[Section 3]{HJST23}. The times $t_0$ and $t_1$ correspond to $\hat{t}_0=-1$ and $\hat{t}_1=\frac{t_1-t_*}{1-t_*}\in(-1,0].$ Since $1-t=(1-t_*)(1-\hat{t}),$ then by (\ref{ricci-potential-equation1}), (\ref{LiYau1}), (\ref{v-lb-ub}) and (\ref{type2-scalar}), we see that the function $v$ satisfies
$$(\partial_{\hat{t}}-\hat{\Delta})v=\frac{v-1}{1-\hat{t}}+\frac{1}{1-\hat{t}}tr_{\hat{\omega}}\Big(\frac{\pi^*\omega_Y}{1-t_*}\Big),$$
$$\frac{|\hat{\Delta}v|}{v}+\frac{|\hat{\nabla}v|^2}{v}\leq\frac{C}{1-\hat{t}}\leq C,~~~~v\geq1,$$
$$R_{\hat{\omega}}\leq\frac{Cv}{1-\hat{t}}\leq Cv,$$
on $X\times[-\frac{t_*}{1-t_*},0],$ where all the quantities are taken with respect to $\hat{\omega}(\hat{t}).$ The parabolic Schwarz lemma \cite{ST07,ST12} gives $tr_{\hat{\omega}}\big(\frac{\pi^*\omega_Y}{1-t_*}\big)\leq C$ and $$(\partial_{\hat{t}}-\hat{\Delta})tr_{\hat{\omega}}\big(\frac{\pi^*\omega_Y}{1-t_*}\big)\leq-C^{-1}\big|\hat{\nabla}tr_{\hat{\omega}}\big(\frac{\pi^*\omega_Y}{1-t_*}\big)\big|^2+C(1-t_*).$$
Writing $a=1-t_*$ and $S=\tr_{\hat\omega}(\pi^*\omega_Y/a)$, the tensor identity is
$$\Ric(\hat\omega)=\frac{\hat\omega-\pi^*\omega_Y/a}{1-\hat t}
-\sqrt{-1}\partial\bar\partial v.$$
The displayed potential equation and $v\geq1$, $S\leq C$ also imply
$$\frac{|\partial_{\hat t}v|}{v}
\leq\frac{|\hat\Delta v|}{v}+\frac{v-1+S}{(1-\hat t)v}\leq C.$$
These bounds, together with the Schwarz lemma, give the properties (3.19)--(3.21) of \cite{HJST23} for the potential $v_i$ there (with $\theta_Y=\omega_Y,$ hence $\rho=0$ and $B_0=0$ in the notation of \cite{HJST23}), with the only difference that the target $(\mathbb{CP}^N,\omega_{FS})$ of \cite{HJST23} is replaced by the compact K\"ahler manifold $(Y,\omega_Y),$ whose bounded bisectional curvature gives the parabolic Schwarz Lemma. The auxiliary positive forms in that proof can be chosen as fixed positive multiples of $\omega_Y$. Hence the proof of \cite[Theorem 3.6]{HJST23} applies to $\hat{\omega}$ using these verified analytic estimates on the time interval $[\hat{t}_0,\hat{t}_1]=[-1,\hat{t}_1]$ and gives, for the positive solution $u$ of $\partial_{\hat{t}}u=\hat{\Delta}u,$
$$-\frac{\hat{\Delta}u}{u}+\frac{1}{2}\frac{|\hat{\nabla}u|^2}{u^2}\leq C\Big(\frac{1}{\hat{t}-\hat{t}_0}+v\Big)~~~~\text{on}~~X\times(-1,\hat{t}_1],$$
where $C$ depends only on $n,\omega_0,\omega_Y,$ because the initial time $\hat{t}_0=-1$ is fixed (in \cite{HJST23} the constant depends on the initial time). Rewriting this in terms of $\omega(t),$ using $$\hat{\Delta}=(1-t_*)\Delta,~~|\hat{\nabla}u|^2=(1-t_*)|\nabla u|^2~~\text{and} ~~\hat{t}-\hat{t}_0=\frac{t-t_0}{1-t_*},$$ and dividing by $1-t_*=\frac{1-t_0}{2},$ we obtain the required estimate (\ref{liyau-ineq}).
\end{proof}

\begin{remark}\label{liyauremark}
Two comments on the form of (\ref{liyau-ineq}) are in order. First, the coefficient of the gradient term is $\frac{1}{2}$. The quantities $\frac{\Delta u}{u}$ and $\frac{|\nabla u|^2}{u^2}$ have the same scaling. For the Euclidean heat kernel $$G(z,\sigma)=(4\pi\sigma)^{-m/2}e^{-|z|^2/4\sigma}$$ one computes
$$-\frac{\Delta G}{G}+a\frac{|\nabla G|^2}{G^2}=\frac{m}{2\sigma}+(a-1)\frac{|z|^2}{4\sigma^2}.$$
By a local blow-up argument at a smooth time slice, no inequality of the form (\ref{liyau-ineq}) can hold with a coefficient $a>1$ in front of the gradient term. Secondly, since $v\leq\frac{C}{1-t}$ by (\ref{v-lb-ub}), the right hand side of \eqref{liyau-ineq} is of the order
$\frac{1}{t-t_0}+\frac{1}{(1-t_0)^2}$, which is consistent with the type-II bound \eqref{type2-scalar}. In particular (\ref{liyau-ineq}) diverges as $t_0\to1,$ and it does not give uniform time-derivative bounds on the heat kernel at scales larger than $1-t_0.$ This is why the cut-off function of Theorem \ref{cutoffftn} is constructed below directly from Bamler-Zhang \cite{BamZh17}. \qed
\end{remark}

\subsection{Distance distortion estimates}\label{3.1}

We derive Theorem \ref{localdistortion} from the distance distortion estimate of Bamler-Zhang \cite[Theorem 1.1]{BamZh17}. Let $(M^m,(g_t)_{t\in[0,T)})$ be a Ricci flow on a compact manifold. Then there is a constant $0<\alpha<1,$ which only depends on $m$ and $\nu[g_0,2T],$ such that the following holds: Suppose that $t_0\in[0,T),$ $0<r_0\leq\sqrt{t_0},$ $x_0,y_0\in M$ with $d_{t_0}(x_0,y_0)\geq r_0$ and $t\in[t_0-\alpha r_0^2,\min\{t_0+\alpha r_0^2,T\}).$ Assume that
\begin{equation}\label{BZscalar}
R\leq r_0^{-2}~~~\text{on}~~M\times[t,t_0]~~\text{or on}~~M\times[t_0,t],
\end{equation}
depending on whether $t\leq t_0$ or $t\geq t_0.$ Then
\begin{equation}\label{BZdistortion}
\alpha d_{t_0}(x_0,y_0)<d_t(x_0,y_0)<\alpha^{-1}d_{t_0}(x_0,y_0).
\end{equation}
In \cite{BamZh17} the upper bound (\ref{BZscalar}) is only required on a subset $U\times[t,t_0]$ (resp. $U\times[t_0,t]$), where $U$ contains a minimizing geodesic between $x_0$ and $y_0$ at all intermediate times. We shall only use the global version stated above. We stress that only an {\it upper} bound on the scalar curvature is required, and no noncollapsing assumption beyond the finiteness of $\nu[g_0,2T].$

\textbf{Proof~of~}Theorem \ref{localdistortion}: Let $1\leq C_{II}<\infty$ be such that $$R\leq C_{II}(T-t)^{-2}~~~ \text{on} ~~X\times[0,T),$$
see \eqref{type2-scalar} and the scalar input \cite{JS26} (or \cite{ZhZ10} in its pullback setting). Put
$$c_0:=\min\Big\{\frac{1}{2\sqrt{C_{II}}},~\frac{1}{\sqrt{2T}}\Big\}.$$
Let $t_0,~r_0,~x_0,~y_0$ be as in the theorem, so that $r_0\leq c_0(T-t_0),$ and let $$I_0:=[t_0-r_0^2,t_0+r_0^2].$$
Since $r_0^2\leq c_0^2(T-t_0)^2\leq\frac{T-t_0}{2}$ by the choice of $c_0,$ we have $$I_0\subset[0,\frac{T+t_0}{2}]~~~ \text{and}~~~ T-t\geq\frac{T-t_0}{2}$$
for all $t\in I_0.$ Consequently
$$R\leq\frac{C_{II}}{(T-t)^2}\leq\frac{4C_{II}}{(T-t_0)^2}\leq\frac{4C_{II}c_0^2}{r_0^2}\leq\frac{1}{r_0^2}~~~~\text{on}~~X\times I_0,$$
that is, (\ref{BZscalar}) holds for all $t\in I_0.$ Moreover $\nu[g_0,2T]>-\infty,$ since $\omega_0$ is a fixed smooth metric on the compact manifold $X.$ We may therefore apply (\ref{BZdistortion}) to the Ricci flow associated to (\ref{KRF}) as explained in Section \ref{preliminary}, at $(x_0,t_0)$ and at the scale $r_0,$ for all $t\in[t_0-\alpha r_0^2,t_0+\alpha r_0^2]\subset I_0.$ This gives the assertion, with a constant $\alpha$ which depends only on $n$ and $\nu[g_0,2T],$ hence only on $n$ and $\omega_0.$

Under the type-I assumption \eqref{typeIscalar}, after increasing $C_0$ to at least one, the same argument applies
with
$$c_0:=\min\{(2C_0)^{-1/2},2^{-1/2}\}.$$
Indeed, if $r_0\leq c_0\sqrt{T-t_0}$, then
$$r_0^2\leq c_0^2(T-t_0)\leq \frac{T-t_0}{2},$$
and hence
$$T-t\geq \frac{T-t_0}{2}
\qquad\text{for all }t\in I_0.$$
Consequently,
$$R\leq\frac{C_0}{T-t}\leq\frac{2C_0}{T-t_0}\leq\frac{2C_0c_0^2}{r_0^2}\leq\frac{1}{r_0^2}~~~~\text{on}~~X\times I_0.$$
This completes the proof of the theorem. \qed

\textbf{Proof~of~}Corollary \ref{globaldistortion}:
Let $\alpha$ be the constant from Theorem \ref{localdistortion}, and choose
$$A\geq\max\{2,2\alpha^{-1/2},2\alpha^{-1}\}.$$
Suppose that the assertion fails for some admissible $t\ne t_0$. By continuity there is a time $t_1$ strictly between $t_0$ and $t$, or equal to $t$, such that
$$d_{t_1}(x_0,y_0)=A\sqrt{r_0^2+|t_1-t_0|}.$$
Put
$$r_1:=\frac A2\sqrt{r_0^2+|t_1-t_0|}.$$
The assumptions imply
$$r_1^2\leq\frac{t_0}{4},\qquad
r_1\leq\frac{c_0}{2}(T-\max\{t_0,t\})\leq c_0(T-t_1).$$
Moreover, $|t-t_0|\leq t_0/A^2$, and hence
$$t_1\geq t_0-|t-t_0|\geq\frac34t_0\geq r_1^2.$$
Thus $r_1\leq\min\{\sqrt{t_1},c_0(T-t_1)\}$, while
$$|t_1-t_0|\leq\alpha r_1^2.$$
We may apply Theorem \ref{localdistortion} at time $t_1$ and scale $r_1$, since $d_{t_1}(x_0,y_0)=2r_1$. It gives
$$r_0=d_{t_0}(x_0,y_0)>\alpha d_{t_1}(x_0,y_0)
=\alpha A\sqrt{r_0^2+|t_1-t_0|}>r_0,$$
a contradiction. The case $t=t_0$ is immediate. \qed

As a consequence of Theorem \ref{localdistortion} we obtain the following two-sided comparison of the distance functions at times $s$ and $t$ at the scale $\sqrt{t-s}.$ It is this form of the distance distortion which will be used repeatedly in Sections \ref{sec5} and \ref{sec6}.

\begin{corollary}\label{distcomparison}
Under the assumption of {\rm Theorem \ref{localdistortion}}, there are constants $0<\alpha<1$ and $c_2>0$ depending only on $n,\omega_0$ such that for all $0\leq s<t<T$ with $t-s\leq c_2(T-t)^2$ and all $x,y\in X$
\begin{align}\label{distcomparison1}
\frac{d_t^2(x,y)}{t-s}\leq\alpha^{-1}+\alpha^{-2}\frac{d_s^2(x,y)}{t-s},~~~~\frac{d_s^2(x,y)}{t-s}\leq\alpha^{-1}+\alpha^{-2}\frac{d_t^2(x,y)}{t-s}.
\end{align}
Under the type-I assumption \eqref{typeIscalar} the same holds for all $0\leq s<t<T$ with $t-s\leq c_2(T-t),$ with constants depending on $C_0$ in {\rm Theorem \ref{hke2}}.
\end{corollary}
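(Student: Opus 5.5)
The plan is to apply Theorem \ref{localdistortion} at the scale $r_0:=\sqrt{(t-s)/\alpha}$, based at $t_0=t$ for the first inequality and at $t_0=s$ for the second, and to split into two cases according to whether $d_{t_0}(x,y)\geq r_0$. Throughout, let $\alpha,c_0$ be the constants of Theorem \ref{localdistortion}, and choose $c_2\leq\alpha c_0^2/4$, so that $t-s\leq c_2(T-t)^2$ gives $r_0\leq\frac{c_0}{2}(T-t)\leq c_0(T-t)\leq c_0(T-s)$.

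The one hypothesis of Theorem \ref{localdistortion} that does not come for free is $r_0\leq\sqrt{t_0}$. I would handle this through Remark \ref{bddgeom}. If $s<\varepsilon_0$ for some fixed small $\varepsilon_0$, then $t\leq s+c_2T^2$ lies in a compact subinterval of $[0,T)$. There the metrics are uniformly equivalent to $\omega_0$, and the smooth-flow distortion $d_t\leq e^{C(t-s)}d_s$ (and conversely), with $e^{C(t-s)}\leq\alpha^{-1}$, gives both inequalities directly after possibly shrinking $\alpha$. So we may assume $s\geq\varepsilon_0$ and shrink $c_2$ so that $r_0^2=(t-s)/\alpha\leq c_2T^2/\alpha\leq\varepsilon_0\leq s\leq t$. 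This is the step where care is needed: it ensures $r_0\leq\sqrt{t_0}$ for both choices of $t_0$, with $\alpha$ replaced by the minimum of the two constants.

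Now fix $t_0\in\{s,t\}$ and let $t'$ be the other time, so $|t'-t_0|=t-s=\alpha r_0^2$.

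\emph{Case 1: $d_{t_0}(x,y)\geq r_0$.} Theorem \ref{localdistortion} gives $d_{t'}(x,y)<\alpha^{-1}d_{t_0}(x,y)$, hence $d_{t'}^2/(t-s)\leq\alpha^{-2}d_{t_0}^2/(t-s)$.

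\emph{Case 2: $d_{t_0}(x,y)<r_0$.} I would show $d_{t'}(x,y)\leq\alpha^{-1}r_0$ by contradiction. Suppose $d_{t'}(x,y)>\alpha^{-1}r_0\geq r_0$. Apply Theorem \ref{localdistortion} with base time $t'$ and the same scale $r_0$; this is admissible by the scale checks above, since $r_0\leq c_0(T-t)$ and $r_0\leq\sqrt{s}$. It gives $d_{t_0}(x,y)>\alpha d_{t'}(x,y)>r_0$, a contradiction. Hence $d_{t'}^2/(t-s)\leq\alpha^{-2}r_0^2/(t-s)=\alpha^{-3}$. After renaming $\alpha$ (replacing it by $\alpha^{3}$), this is at most $\alpha^{-1}$, and it is compatible with the $\alpha^{-2}$ coefficient in Case 1.

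Adding the two cases gives \eqref{distcomparison1} for both orderings.

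For the type-I version, the same argument runs with $r_0\leq c_0\sqrt{T-t}$. One chooses $c_2\leq\alpha c_0^2/4$ and uses the type-I part of Theorem \ref{localdistortion}. The main obstacle is only the bookkeeping of admissible scales, in particular the $\sqrt{t_0}$ constraint near $t=0$, which Remark \ref{bddgeom} disposes of.
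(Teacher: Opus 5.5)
Your argument is correct and is essentially the paper's proof. It uses the same scale $r_0=\alpha^{-1/2}\sqrt{t-s}$, the same two-case split on whether a distance is at least $r_0$, and Remark \ref{bddgeom} for early times; the paper cuts at $t\le T/2$ rather than at $s<\varepsilon_0$, and otherwise differs only in bookkeeping. The paper applies Theorem \ref{localdistortion} at the time whose distance is being bounded and splits on that distance directly, so the small case gives $\alpha^{-1}$ at once and your contradiction step in Case 2, together with the renaming $\alpha\mapsto\alpha^{3}$, is not needed.
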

\begin{proof}
Let $\alpha_0\in(0,1)$ and $c_0$ be the constants of Theorem \ref{localdistortion}. Choose
$$0<c_2\leq\min\{\alpha_0c_0^2,\alpha_0/(4T)\},\qquad
r_0:=\alpha_0^{-1/2}\sqrt{t-s}.$$
If $t\leq T/2$, the two metrics are uniformly equivalent by Remark \ref{bddgeom}, and the assertion follows after decreasing the final constant $\alpha$. If $t\geq T/2$, then
$$r_0^2\leq\frac{(T-t)^2}{4T}\leq\frac{T}{16},\qquad
s=t-\alpha_0r_0^2\geq\frac{7T}{16}.$$
Consequently
$$r_0\leq\min\{\sqrt{s},c_0(T-t)\}.$$
If $d_t(x,y)<r_0$, then $d_t^2(x,y)/(t-s)<\alpha_0^{-1}$. Otherwise, applying Theorem \ref{localdistortion} at time $t$ gives $d_t(x,y)<\alpha_0^{-1}d_s(x,y)$. These two cases give the first inequality in \eqref{distcomparison1}, with $\alpha_0$ in place of $\alpha$. Applying the same argument at time $s$ gives the second inequality. Decreasing $\alpha$ once more incorporates the uniformly equivalent metrics on $[0,T/2]$.
For the type-I case choose instead
$$0<c_2\leq\min\{\alpha_0c_0^2,\alpha_0/8\}.$$
Then $t-s\leq c_2(T-t)$ implies $r_0\leq c_0\sqrt{T-t}$, and, when $t\geq T/2$, also $r_0^2\leq T/16$ and $s\geq7T/16$. The preceding argument therefore applies using the type-I version of Theorem \ref{localdistortion}.
\end{proof}

\subsection{Cut-off function}

Theorem \ref{cutoffftn} is obtained in the same way from the cut-off function of Bamler-Zhang \cite[Theorem 1.3]{BamZh17}, which reads as follows. Let $(M^m,(g_t)_{t\in[0,T)})$ be a Ricci flow on a compact manifold. Then there is a constant $\rho>0,$ which only depends on $m$ and $\nu[g_0,2T],$ such that the following holds: Let $(x_0,t_0)\in M\times[0,T),$ $0<r_0\leq\sqrt{t_0}$ and $0<\tau\leq\rho^2r_0^2,$ and assume that $$R\leq r_0^{-2}~~~~ \text{on}~~~ ~P(x_0,t_0,r_0,-\tau).$$ Then there is a function $\phi\in C^{\infty}(M\times[t_0-\tau,t_0])$ with the properties $(a)$--$(d)$ of Theorem \ref{cutoffftn}. 

\textbf{Proof~of~}Theorem \ref{cutoffftn}: Let $c_0$ be the constant of Theorem \ref{localdistortion}, and let $$r_0\leq\min\{\sqrt{t_0},c_0(T-t_0)\}$$ and $$0<\tau\leq\rho^2r_0^2\leq r_0^2.$$
As shown in the proof of Theorem \ref{localdistortion}, $$R\leq r_0^{-2}~~~~\text{on}~~~ X\times[t_0-r_0^2,t_0]\supset P(x_0,t_0,r_0,-\tau),$$
and $\nu[g_0,2T]>-\infty.$
Hence the cut-off function of \cite[Theorem 1.3]{BamZh17} exists at $(x_0,t_0)$ and at the scale $r_0,$ with a constant $\rho$ depending only on $n$ and $\nu[g_0,2T],$ hence only on $n,\omega_0.$ The type-I case is identical. \qed

\section{Mean value inequalities for the conjugate heat equation}\label{sec4}

In this section we present some consequences of the existence of the cut-off function from Theorem \ref{cutoffftn}. Motivated by the work of Bamler-Zhang \cite{BamZh17}, we follow their arguments to obtain mean value inequalities for the conjugate heat equation, on the scales on which Theorems \ref{localdistortion} and \ref{cutoffftn} are available. Those mean value inequalities will be used in the subsequent section to deduce a Gaussian upper bound for the heat kernel. Throughout this section $m=2n$ and $c_0$ denotes the constant of Theorem \ref{localdistortion}. Decreasing $c_0$ we may and do assume that $c_0T\leq1.$ More precisely, we obtain the following propositions.

\begin{lemma}\label{jifenLpcheateqa}
Assume the hypotheses of {\rm Theorem \ref{localdistortion}} and let $p>2$. Then there are constants $0<\alpha<1,~A<\infty$ that only depend on $n,~\omega_0$ and $p$ such that the following holds:

Let $(x_0,t_0)\in X\times[0,T)$ and $0<r_0\leq\min\{\sqrt{t_0},c_0(T-t_0)\},$ and let $u\in C^{\infty}(X\times[t_0,t_0+r_0^2])$ be a solution of the conjugate heat equation \eqref{cheateqa}. Then
\begin{equation}\label{Lpineq}
\big(\int_{Q^+(x_0,t_0,\alpha r_0)}|u|^p(x,t)dg_t(x)dt\big)^{\frac{2}{p}}\leq\frac{A}{r_0^{(m+2)(p-2)/p}}\int_{Q^+(x_0,t_0, r_0)}|u|^2(x,t)dg_t(x)dt.
\end{equation}
\end{lemma}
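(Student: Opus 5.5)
We prove \eqref{Lpineq} by a parabolic Moser/Sobolev step, as in Bamler--Zhang \cite[Section 4]{BamZh17}. The ingredients are the cut-off function of Theorem \ref{cutoffftn}, the uniform Sobolev inequality \eqref{Soboleviq}, and the scalar bound $R\leq r_0^{-2}$, which holds on $X\times[t_0-r_0^2,t_0+r_0^2]$ for $r_0\leq c_0(T-t_0)$ by the proof of Theorem \ref{localdistortion}.

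\textbf{Step 1: rescaling.} We rescale parabolically so that $r_0=1$. The rescaled metric is $\tilde g(\tilde t)=r_0^{-2}g(t_0+r_0^2\tilde t)$ on $\tilde t\in[0,1]$, possibly after slightly shrinking the interval so that the cut-off exists at the later times. Under this rescaling the following hold.
\begin{itemize}
\item[(i)] $-r_0^2R_0\leq\tilde R\leq 1$, and $r_0^2R_0\leq c_0^2T^2R_0$ is bounded.
\item[(ii)] \eqref{Soboleviq} keeps a uniform constant, since $r_0\leq c_0T\leq 1$ (see the remark after Theorem \ref{ZhZh22}).
\item[(iii)] The powers of $r_0$ in \eqref{Lpineq} are exactly the scaling weights, because $dg\,dt$ scales by $r_0^{m+2}$.
\end{itemize}
It therefore suffices to prove the inequality with $r_0=1$.

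\textbf{Step 2: cut-off.} The conjugate equation runs backward, so the natural "initial" time is the top time $t_0+r_0^2$. The cut-off is needed on the interval $[t_0,t_0+\alpha^2]$, centered at points of $Q^+(x_0,t_0,\alpha)$. We apply Theorem \ref{cutoffftn} at base times $t'\in[t_0,t_0+\rho^2]$ with scale comparable to $1$. Theorem \ref{localdistortion} guarantees that balls $B(x_0,t,\alpha)$ stay inside $B(x_0,t',1/2)$ for nearby times. Together these give a function $\phi$ with the following properties:
\begin{itemize}
\item $\phi\geq\rho$ on $Q^+(x_0,t_0,\alpha)$;
\item $\phi$ is supported in $Q^+(x_0,t_0,1)$;
\item $|\nabla\phi|,\ |\partial_t\phi|,\ |\Delta\phi|\leq C$.
\end{itemize}
Since a single cut-off only covers a backward interval of length $\rho^2$, we chain finitely many of them, or multiply by a time cut-off $\eta(t)$ vanishing near $t_0+1$. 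The number of pieces is uniform.

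\textbf{Step 3: energy estimate.} We test $(-\partial_t-\Delta+R)u=0$ against $\phi^2u$ and integrate over $X\times[t,t_0+1]$. Using $\partial_t dg=-R\,dg$, this yields
$$\sup_t\int\phi^2u^2\,dg_t+\iint|\nabla(\phi u)|^2\leq C\iint\big(|\nabla\phi|^2+|\partial_t\phi|+\phi^2|R|\big)u^2.$$
The terms involving $R$ partially cancel. The remaining $R$-terms are controlled by $|R|\leq C$ in the rescaled picture; this two-sided bound is where the type-II restriction $r_0\leq c_0(T-t_0)$ enters.

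The Sobolev inequality \eqref{Soboleviq} carries the weight $(R+R_0+1)$, which is bounded above, so it gives
$$\Big(\int(\phi u)^{2\kappa}\Big)^{1/\kappa}\leq C\Big(\int|\nabla(\phi u)|^2+\int(\phi u)^2\Big),\qquad \kappa=\tfrac{m}{m-2}.$$
Interpolating with the $\sup_t L^2$ bound (Hölder in $x$ and $t$) gives space-time integrability with exponent $p_1=2+4/m>2$. This proves the claim for $p\leq p_1$.

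\textbf{Step 4: larger $p$.} For larger $p$, we iterate finitely many times on shrinking cylinders using $|u|^{q/2}$. The function $|u|^{q/2}$ is a subsolution up to bounded terms, since $R$ is bounded. The number of iterations depends only on $p$, so $\alpha$ and $A$ depend on $p$.

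\textbf{Main obstacle.} The hardest part is Step 2 together with the $R$-terms in Step 3. We must produce a cut-off adapted to a forward cylinder $Q^+$, whose balls are measured in the moving metrics $d_t$, while Theorem \ref{cutoffftn} only gives backward neighborhoods $P(\cdot,\cdot,\cdot,-\tau)$. The distance distortion of Theorem \ref{localdistortion} is exactly what reconciles the two.
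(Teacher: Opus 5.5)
Your proposal is correct and follows essentially the paper's route: rescale to $r_0=1$, take the cut-off of Theorem \ref{cutoffftn} based at a slightly later top time of order $\alpha^2$ above $t_0$, and multiply it by a temporal cut-off vanishing at that top time (not near $t_0+1$, which no single cut-off reaches). Then run the energy estimate for $u^{p/2}$ against \eqref{Soboleviq}, gain the factor $\frac{m+2}{m}$ in the exponent by H\"older, and iterate finitely many times from $p=2$ on shrinking cylinders. The only real difference is how you treat the scalar curvature: you control the $R$-terms and the Sobolev weight by the two-sided bound $|\tilde R|\le C$ available on $X\times[t_0,t_0+r_0^2]$, whereas the paper uses only $R\geq -R_0$ in the energy step and absorbs the positive scalar curvature term into the weight $(R+R_0+1)$ of \eqref{Soboleviq}, so in the paper the upper bound on $R$ enters only through the cut-off and the distance distortion.
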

Note that $t_0+r_0^2<T$ since $r_0^2\leq\frac{T-t_0}{2},$ and that (\ref{Lpineq}) is invariant under parabolic rescaling.

\begin{proof}
All the ingredients of the proof below are invariant under the parabolic rescaling $$\omega\mapsto r_0^{-2}\omega,~~~t\mapsto r_0^{-2}t,$$ provided $r_0\leq1$. Theorems \ref{localdistortion} and \ref{cutoffftn} are formulated in a scale invariant way. The Sobolev inequality (\ref{Soboleviq}) is transformed into the same inequality with $R_0+1$ replaced by $r_0^2(R_0+1)\leq R_0+1.$ Since $r_0\leq c_0T\leq1,$ we may therefore assume $r_0=1.$ Moreover, replacing $u$ by $u_{\varepsilon}:=\sqrt{u^2+\varepsilon^2}$ and letting $\varepsilon\to0$ at the end (the flow being smooth on $[t_0,t_0+1],$ the function $u_{\varepsilon}$ satisfies the inequality (\ref{4.3'}) below up to an error term which vanishes as $\varepsilon\to0$), we may assume that $u>0.$

Let $\rho>0$ be the constant in Theorem \ref{cutoffftn}. By the distance distortion estimate of Theorem \ref{localdistortion}, applied at scales comparable to $1,$ we can find uniform constants $0<\gamma\leq\frac{\beta\rho}{2}<\beta\leq\frac{1}{2}$ such that
$$B(x_0,t_1,\beta)\subset B(x_0,t_2,1)~~~\text{and}~~~B(x_0,t_1,\gamma)\subset B(x_0,t_2,\beta\rho)~~~\text{for~~any}~~t_1,t_2\in[t_0,t_0+2\gamma^2].$$
Let $\phi\in C^{\infty}(X\times[t_0,t_0+2\gamma^2])$ be the cut-off function from Theorem \ref{cutoffftn} applied at $x_0$, at scale $\beta$ and at time $t_0+2\gamma^2$ (with $\tau=2\gamma^2\leq\rho^2\beta^2$), that is,
\begin{enumerate}
\item $0\leq\phi<1,~|\nabla\phi|<\beta^{-1},~|\partial_t\phi|+|\Delta\phi|<\beta^{-2}~~\text{on}~X\times[t_0,2\gamma^2+t_0];$
\item $\phi=0~~\text{on}~~(X\setminus B(x_0,t_0+2\gamma^2,\beta))\times[t_0,t_0+2\gamma^2],~~\text{and}~~B(x_0,t_0+2\gamma^2,\beta)\times[t_0,t_0+2\gamma^2]\subset Q^+(x_0,t_0,1);$
\item $\phi>\rho~~\text{on}~~P(x_0,t_0+2\gamma^2,\beta\rho,-2\gamma^2)\supset Q^+(x_0,t_0,\gamma).$
\end{enumerate}
Let $\eta\in C^{\infty}([t_0,2\gamma^2+t_0])$ be a cut-off function with $\eta\equiv1$ on $[t_0,\gamma^2+t_0],~~\eta(2\gamma^2+t_0)=0,$ $0\leq\eta\leq1$ and $|\eta'|<C_1$ for some uniform constant $C_1<\infty$. Then the function
$$\psi(x,t)=\eta(t)\phi(x,t)$$
still satisfies the properties $(1)$, $(2)$ above, with $\beta^{-1},\beta^{-2}$ replaced by a uniform constant $C_0,$ it vanishes at $t=t_0+2\gamma^2,$ and
$$\psi>\rho~~\text{on} ~~Q^+(x_0,t_0,\gamma)\subset B'\times[t_0,t_0+\gamma^2],$$ where $$B':=B(x_0,t_0+2\gamma^2,\beta\rho)$$
(note that $\eta\equiv1$ on the time interval $[t_0,t_0+\gamma^2]$ of $Q^+(x_0,t_0,\gamma)$).

We first note that, for $u>0$ and $p\geq2,$
\begin{equation}\label{4.3'}
\Delta u^{\frac{p}{2}}-\frac{p}{2}Ru^{\frac{p}{2}}+\partial_tu^{\frac{p}{2}}\geq0.
\end{equation}
Multiplying (\ref{4.3'}) by $2u^{\frac{p}{2}}\psi^2$ and integrating at a time $t\in[t_0,2\gamma^2+t_0]$
yields, using $\partial_t(dg_t)=-Rdg_t,$
\begin{align}
\frac{d}{dt}\int_X u^p\psi^2dg_t&\nonumber=2\int_X \partial_tu^{\frac{p}{2}}u^{\frac{p}{2}}\psi^2dg(t)+\int_Xu^p\partial_t(\psi^2)dg_t-\int_Xu^p\psi^2Rdg_t\\ \nonumber
&\geq-2\int\Delta u^{\frac{p}{2}}u^{\frac{p}{2}}\psi^2dg_t+(p-1)\int u^p\psi^2Rdg_t-2C_0\int_Bu^pdg_t\\ \nonumber
&=2\int\nabla u^{\frac{p}{2}}(\nabla(u^{\frac{p}{2}}\psi)\psi+u^{\frac{p}{2}}\psi\nabla\psi)+(p-1)\int u^p\psi^2Rdg_t-2C_0\int_Bu^pdg_t\\ \label{4.4'}
&=2\int|\nabla(\psi u^{\frac{p}{2}})|^2-2\int u^p|\nabla\psi|^2+(p-1)\int u^p\psi^2Rdg_t-2C_0\int_Bu^pdg_t,
\end{align}
where $B=B(x_0,t_0+2\gamma^2,\beta)$ and we used $|\partial_t\psi|\leq C_0$ together with $supp~\psi(\cdot,t)\subset B.$

Set $$f:=\psi u^{\frac{p}{2}}$$
and write $$(p-1)R=(p-1)(R+R_0+1)-(p-1)(R_0+1),$$ where $R_0$ is the constant of Lemma \ref{scalarlbunif}. Put $$\kappa:=\min\{2,p-1\}>0$$ and $$|\nabla\psi|\leq C_0,~\psi\leq1,$$ then the right hand side of (\ref{4.4'}) is bounded from below by
$$\kappa\int_X\big(|\nabla f|^2+(R+R_0+1)f^2\big)dg_t-\big(2C_0+2C_0^2+(p-1)(R_0+1)\big)\int_Bu^pdg_t.$$
We now apply the uniform Sobolev inequality (\ref{Soboleviq}) of Theorem \ref{ZhZh22} to $f,$ with the exponent $$2q:=\frac{2m}{m-2}=\frac{2n}{n-1},~~~ i.e.~~~q=\frac{m}{m-2},$$ and obtain
\begin{equation}\label{4.5'}
\frac{d}{dt}\int_X u^p\psi^2dg_t\geq C_2^{-1}\big(\int_X\psi^{2q}u^{pq}dg_t\big)^{1/q}-C_3\int_Bu^pdg_t.
\end{equation}

We stress that it is important to use here the critical inequality (\ref{Soboleviq}), which contains the scalar curvature term and is invariant under parabolic rescaling. The zero order term $(R_0+1)f^2$ in (\ref{Soboleviq}) only produces the harmless last term in (\ref{4.5'}).

Integrating (\ref{4.5'}) from $t_1\in[t_0,t_0+\gamma^2]$ to $t_0+2\gamma^2$ and observing that the integral on the left-hand side vanishes for $t=t_0+2\gamma^2$ and that $\psi>\rho$ on $B'\times[t_0,t_0+\gamma^2]$ yields
\begin{align*}
&0-\int_Xu^p\psi^2(\cdot,t_1)dg_{t_1}\\
\geq~&~C_2^{-1}\int_{t_1}^{t_0+2\gamma^2}\Big(\int_X\psi^{2q}u^{pq}dg_t\Big)^{1/q}dt-C_3\int_{t_1}^{t_0+2\gamma^2}\int_Bu^pdg_tdt\\
\geq~&~C_2^{-1}\rho^2\int_{t_1}^{t_0+\gamma^2}\Big(\int_{B'}u^{pq}dg_t\Big)^{1/q}dt-C_3\int_{t_0}^{t_0+2\gamma^2}\int_Bu^pdg_tdt.
\end{align*}
Letting $t_1$ vary over $[t_0,t_0+\gamma^2],$ we obtain
$$\sup_{t\in[t_0,t_0+\gamma^2]}\int_{B'}u^p(\cdot,t)dg_t+\int_{t_0}^{t_0+\gamma^2}\Big(\int_{B'}u^{pq}dg_t\Big)^{1/q}dt\leq C_4\int_{t_0}^{t_0+2\gamma^2}\int_Bu^pdg_tdt.$$
By H\"older inequality, we have
\begin{align*}
\int_{t_0}^{t_0+\gamma^2}\int_{B'}u^{p(1+\frac{q-1}{q})}dg_tdt\leq&~\int_{t_0}^{t_0+\gamma^2}\Big(\int_{B'}u^{pq}dg_t\Big)^{1/q}\Big(\int_{B'}u^p\Big)^{\frac{q-1}{q}}dt\\
\leq&~C_4^{1+\frac{q-1}{q}}\Big(\int_{t_0}^{t_0+2\gamma^2}\int_{B}u^pdg_tdt\Big)^{1+\frac{q-1}{q}}.
\end{align*}
As $Q^+(x_0,t_0,\gamma)\subset B'\times[t_0,t_0+\gamma^2]$ and $B\times[t_0,t_0+2\gamma^2]\subset Q^+(x_0,t_0,1),$ it follows, since $1+\frac{q-1}{q}=\frac{m+2}{m},$ that
$$\Big(\int_{Q^+(x_0,t_0,\gamma)}u^{p\frac{m+2}{m}}dg_tdt\Big)^{\frac{m}{p(m+2)}}\leq C_5\Big(\int_{Q^+(x_0,t_0,1)}u^pdg_tdt\Big)^{\frac{1}{p}}.$$

Starting with $p=2$, for which the same energy calculation is valid, and applying this inequality successively at the scales $\gamma^k$ (which is legitimate, since the inequality is scale invariant and $\gamma^k\leq1$), yields the desired inequality for $p=2(\frac{m+2}{m})^k$ and $\alpha=\gamma^k$ and some $A$ which depends on $k$. If $p$ takes other values, we can use interpolation to prove the result. From above, we complete the proof.
\end{proof}

At first glance, it may seem that Moser iteration in the proof of the previous lemma will lead to the mean value inequality. However, since the size of the cube $Q^+(x_0,t_0,\gamma^k)$ shrinks to 0 when $k\to\infty,$ we here follow the same path as the argument of Bamler-Zhang in \cite[Lemma 4.2]{BamZh17} to prove the mean value inequality.
\begin{proposition}\label{supcheateqa}
Under the assumption of {\rm Theorem \ref{localdistortion}}, there are constants $0<\alpha<1,~A<\infty$ that only depend on $n,~\omega_0$ such that the following holds:

Let $(x_0,t_0)\in X\times[0,T)$ and $0<r_0\leq\min\{\sqrt{t_0},c_0(T-t_0)\},$ and let $u\in C^{\infty}(X\times[t_0,t_0+r_0^2])$ be a solution of the conjugate heat equation \rm{(\ref{cheateqa})}. Then
\begin{equation}
\sup_{Q^+(x_0,t_0,\alpha r_0)}|u|^2\leq\frac{A}{r_0^{m+2}}\int_{Q^+(x_0,t_0, r_0)}|u|^2(x,t)dg(t)(x)dt.
\end{equation}
\end{proposition}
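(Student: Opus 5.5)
The plan is to combine Lemma \ref{jifenLpcheateqa} with a pointwise bound at scale one, following \cite[Lemma 4.2]{BamZh17}. First normalize. The statement is invariant under the parabolic rescaling $\omega\mapsto r_0^{-2}\omega$, $t\mapsto r_0^{-2}t$, exactly as in the proof of Lemma \ref{jifenLpcheateqa}. Since $r_0\leq c_0T\leq1$, we may take $r_0=1$ and assume $u>0$ after replacing $u$ by $\sqrt{u^2+\varepsilon^2}$. After rescaling, $R\leq1$ on $X\times[t_0,t_0+1]$, by the argument in the proof of Theorem \ref{localdistortion}. We also have $R\geq-R_0r_0^2\geq-R_0$, the Sobolev inequality (\ref{Soboleviq}), and the distance distortion of Theorem \ref{localdistortion} at every scale $\leq1$ based at times in $[t_0,t_0+1]$.

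\textbf{Step 1 (higher integrability).} Fix $p>m+2$, say $p=2(m+2)$. Lemma \ref{jifenLpcheateqa} gives
\[
\|u\|_{L^p(Q^+(x_0,t_0,\alpha_1))}\leq A_1\|u\|_{L^2(Q^+(x_0,t_0,1))}.
\]

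\textbf{Step 2 (pointwise bound from $L^p$ with $p$ above the parabolic dimension).} Fix $(x_1,t_1)\in Q^+(x_0,t_0,\alpha r_0)$ with $\alpha\ll\alpha_1$. Write $u$ as a conjugate heat flow through a cut-off. Take $\phi$ from Theorem \ref{cutoffftn}, based at $(x_1,t_1+\tau)$ at a small scale $\beta$ with $\tau\sim\rho^2\beta^2$, and set $\psi=\eta(t)\phi$, chosen so that $\psi\equiv1$ near $(x_1,t_1)$ and $\psi$ is supported in $Q^+(x_0,t_0,\alpha_1)$. Distance distortion guarantees this support containment when $\alpha$ and $\beta$ are small.

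Then $w=\psi u$ solves
\[
\Big(-\partial_t-\Delta+R\Big)w=-u\big(\partial_t\psi+\Delta\psi\big)-2\nabla\psi\cdot\nabla u=:F.
\]
Duhamel with the kernel $H$ gives
\[
u(x_1,t_1)=\int\!\!\int H(y,t;x_1,t_1)\,F\,dg_t\,dt .
\]
Integrating the gradient term by parts, we obtain
\[
u(x_1,t_1)\lesssim\int\!\!\int_{\mathrm{supp}\nabla\psi}\big(H+|\nabla_yH|\big)\,u\,dg_t\,dt .
\]
On $\mathrm{supp}\,\nabla\psi$ the parabolic distance to $(x_1,t_1)$ is bounded below. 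To control $H$ and $|\nabla H|$ there, I would use the on-diagonal bound (\ref{ondiagheatub}) together with Bamler's gradient estimate \cite[Theorem 7.5]{Bam20}, which is valid under $R\geq-R_0$ and the entropy bound. These give $\int|\nabla_yH|^{p'}dg\,dt$ finite on the annular region uniformly, because $p'=p/(p-1)<(m+2)/(m+1)$. Hölder's inequality then yields $u(x_1,t_1)\leq C\|u\|_{L^p(Q^+(x_0,t_0,\alpha_1))}$.

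\textbf{Alternative to Step 2.} A cleaner option is Moser iteration of the energy inequality (\ref{4.5'}) centered at each point $(x_1,t_1)$ with radii $r_k=\alpha_1(1/2+2^{-k-1})$, rather than the geometric radii $\gamma^k$. This requires cut-offs with $|\nabla\psi_k|\lesssim 2^k$ between consecutive cubes, built from Theorem \ref{cutoffftn} by covering. Given such cut-offs, the standard iteration produces a constant $C^{\sum k\theta^{-k}}<\infty$ and the sup bound directly.

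\textbf{Main obstacle.} The hard point is exactly the one flagged after Lemma \ref{jifenLpcheateqa}: the available cut-offs only control a ball of radius $\rho r$ inside a ball of radius $r$. A nested family with radii ratio close to $1$ must therefore be assembled from many small cut-offs, and their number is controlled via the volume noncollapsing (\ref{noncollapse2}) and distance distortion. I would try the first route via Step 2, which avoids this issue. Finally, scaling back gives $\sup_{Q^+(x_0,t_0,\alpha r_0)}u^2\leq Ar_0^{-(m+2)}\int_{Q^+(x_0,t_0,r_0)}u^2$.
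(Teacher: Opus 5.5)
Your main route (Step 1 followed by Step 2) is essentially the paper's proof. It uses Lemma \ref{jifenLpcheateqa} with $p>m+2$, a Duhamel representation of $\psi u$ at the point through the forward heat kernel with pole there, H\"older's inequality, and the on-diagonal bound plus a gradient estimate. Together these give $|\nabla H|^{q}\lesssim (t-t_1)^{-((m+1)q-m)/2}H$, which is integrable in time precisely because $q=p'<(m+2)/(m+1)$. The paper differs only cosmetically: it uses a single cut-off centred at $x_0$ and the estimate (\ref{diffhar1}) in place of Bamler's.

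Two adjustments are needed:
\begin{itemize}
\item The annular-support remark buys nothing here. No off-diagonal decay is available yet, and the cut-off of Theorem \ref{cutoffftn} only gives $\psi>\rho$ (not $\psi\equiv1$) at the point. So you divide by $\rho$ and integrate over all of $\mathrm{supp}\,\psi$, including near the pole, which is exactly what your exponent condition handles.
\item The $\sqrt{u^2+\varepsilon^2}$ regularization should be dropped. It is not needed, and it would spoil the Duhamel identity.
\end{itemize}
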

\begin{proof}
As in the previous proof, we may assume that $r_0=1.$ Let
$\alpha>0$ be the constant from Lemma \ref{jifenLpcheateqa} for some fixed $p>m+2,$ say $p=m+3,$ and let $\rho>0$ be the constant of Theorem \ref{cutoffftn}. By Theorem \ref{localdistortion} we can find uniform constants $0<\beta<\gamma<\alpha$ with $2\beta^2\leq\rho^2\gamma^2$ such that
\begin{equation}\label{4.8}
Q^+(x_0,t_0,\beta)\subset P(x_0,t_0+2\beta^2,\gamma\rho,-2\beta^2)~~~\text{and}~~~B(x_0,t_0+2\beta^2,\gamma)\times[t_0,t_0+2\beta^2]\subset Q^+(x_0,t_0,\alpha).
\end{equation}
Applying Theorem \ref{cutoffftn} at the point $x_0$, at scale $\gamma$ and at time $t_0+2\beta^2$ (with $\tau=2\beta^2$), we obtain a cutoff function $\phi\in C^{\infty}(X\times[t_0,t_0+2\beta^2])$ with the following properties for some constant $C_0<\infty:$
\begin{enumerate}
\item $0\leq\phi<1,~|\nabla\phi|<C_0,~|\partial_t\phi|+|\Delta\phi|<C_0~~on~X\times[t_0,2\beta^2+t_0];$
\item $\phi=0~~on~~(X\setminus B(x_0,t_0+2\beta^2,\gamma))\times[t_0,t_0+2\beta^2];$
\item $\phi>\rho~~on~~P(x_0,t_0+2\beta^2,\gamma\rho,-2\beta^2).$
\end{enumerate}
Let $\eta\in C^{\infty}([t_0,t_0+2\beta^2])$ be a cutoff function with $\eta\equiv1$ on $[t_0,t_0+\beta^2],~\eta(t_0+2\beta^2)=0,$ $0\leq\eta\leq1$ and $|\eta'|<C_1$ for some uniform constant $C_1<\infty$. Then the function
$$\psi(x,t):=\eta(t)\phi(x,t)$$
still satisfies properties $(1),(2)$ above, possibly with a larger constant $C_0,$ we have $\psi(\cdot,t_0+2\beta^2)\equiv0,$ and $\psi>\rho$ on $Q^+(x_0,t_0,\beta)$ by (\ref{4.8}).

Fix $(x,t)\in Q^+(x_0,t_0,\beta)$ and consider the heat kernel $H(z,s):=H(z,s;x,t)$ of the heat equation, $\partial_sH=\Delta H,$ with pole at $(x,t).$ Using
$$\partial_su=-\Delta u+Ru,~~\partial_sH=\Delta H \text{and} \partial_s(dg_s)=-Rdg_s,$$
we compute
\begin{align*}
\frac{d}{ds}\int_XHu\psi\,dg_s&=\int_X\big(\Delta H\cdot u\psi-H\psi\Delta u+Hu\partial_s\psi\big)dg_s\\
&=\int_XH\big(u\Delta\psi+u\partial_s\psi+2\langle\nabla u,\nabla\psi\rangle\big)dg_s,
\end{align*}
where we integrated by parts in the first term. Integrating from $s=t$ to $s=t_0+2\beta^2,$ and using
$$\psi(\cdot,t_0+2\beta^2)=0~~\text{and} ~~\lim_{s\searrow t}\int_XHu\psi dg_s=(u\psi)(x,t),$$
we obtain the representation formula
$$(u\psi)(x,t)=-\int_t^{t_0+2\beta^2}\int_XH(z,s)\big\{u(\Delta\psi+\partial_s\psi)+2\langle\nabla u,\nabla\psi\rangle\big\}(z,s)dg_sds.$$
Integrating by parts once more in the last term, $$\int_XH\langle\nabla u,\nabla\psi\rangle dg_s=-\int_Xu\big(\langle\nabla_zH,\nabla\psi\rangle+H\Delta\psi\big)dg_s,$$ this becomes
\begin{align*}
(u\psi)(x,t)&~=~\int_t^{t_0+2\beta^2}\int_XH(z,s)u(z,s)\big(\Delta\psi-\partial_s\psi\big)(z,s)dg_sds\\
&~~+2\int_t^{t_0+2\beta^2}\int_Xu(z,s)\langle\nabla_zH(z,s),\nabla\psi(z,s)\rangle dg_sds.
\end{align*}
From properties $(1),(2)$ of the cutoff function $\psi$ and (\ref{4.8}), it follows that
\begin{align*}
|u\psi|(x,t)&~\leq~2C_0\int_t^{t_0+2\beta^2}\int_{B(x_0,s,\alpha)}H(z,s)|u|(z,s)dg_sds\\
&~~~+~2C_0\int_t^{t_0+2\beta^2}\int_{B(x_0,s,\alpha)}|u|(z,s)|\nabla_zH|(z,s)dg_sds\\
&~~~\equiv~2C_0I_1+2C_0I_2.
\end{align*}
Estimate $I_1$. By H\"older's inequality, for $q=\frac{p}{p-1},$
\begin{align*}
I_1~\leq&~\Big(\int_t^{t_0+2\beta^2}\int_{B(x_0,s,\alpha)}H^q(z,s)dg_sds\Big)^{\frac{1}{q}}\Big(\int_t^{t_0+2\beta^2}\int_{B(x_0,s,\alpha)}|u|^p(z,s)dg_sds\Big)^{\frac{1}{p}}\\
~\leq&~\Big(\int_t^{t_0+2\beta^2}\int_{B(x_0,s,\alpha)}H^q(z,s)dg_sds\Big)^{\frac{1}{q}}\cdot A||u||_{L^2(Q^+(x_0,t_0,1))},
\end{align*}
where we used Lemma \ref{jifenLpcheateqa}. By (\ref{ondiagheatub}) again, there exists a universal constant $C_1<\infty$ such that
\begin{equation}\label{4.9}
H(z,s)\leq\frac{C_1}{(s-t)^{\frac{m}{2}}}~~for~~s\in(t,t+1].
\end{equation}
On the other hand, the rescaled scalar curvature is bounded from below by $-r_0^2R_0\geq-R_0$, since $r_0\leq1$. We deduce from
$$\frac{d}{ds}\int_XH(z,s)dg_s=-\int_XR(z,s)H(z,s)dg_s\leq R_0\int_XH(z,s)dg_s$$
that
$$\int_XH(z,s)dg_s\leq e^{R_0(s-t)}\leq e^{R_0}=:C_2~~~\text{for}~~~s\in(t,t+1].$$
So by the interpolation inequality
\begin{align*}
I_1~\leq&~C_3\Big(\int_t^{t_0+2\beta^2}\int_{B(x_0,s,\alpha)}H(z,s)\frac{1}{(s-t)^{\frac{m}{2(p-1)}}}dg_sds\Big)^{\frac{1}{q}}\cdot A||u||_{L^2(Q^+(x_0,t_0,1))}\\
\leq&~C_4\Big(\int_t^{t_0+2\beta^2}\frac{1}{(s-t)^{\frac{m}{2(p-1)}}}ds\Big)^{\frac{1}{q}}\cdot A||u||_{L^2(Q^+(x_0,t_0,1))}.
\end{align*}
Since $p>m+2,$ we have $\frac{m}{2(p-1)}<1.$ This shows that
$$I_1\leq C_5A||u||_{L^2(Q^+(x_0,t_0,1))}.$$
Estimate $I_2.$ From $(\ref{4.9}),$ we know that for all $s\in(t,t+1],$
$$J_s:=\sup_{X\times[\frac{s+t}{2},s]}H(\cdot,\cdot;x,t)\leq\frac{2^{\frac{m}{2}}C_1}{(s-t)^\frac{m}{2}}.$$
Using (\ref{diffhar1}), we obtain that
$$\frac{|\nabla_zH(z,s)|^2}{H^2(z,s)}\leq\frac{C_6}{s-t}\log\frac{J_s}{H(z,s)}.$$
So
$$|\nabla_zH(z,s)|^q\leq H(z,s)\frac{C_7J_s^{q-1}}{(s-t)^{\frac{q}{2}}}\cdot\big(\frac{H(z,s)}{J_s}\big)^{q-1}\cdot\big(\log\frac{J_s}{H(z,s)}\big)^{\frac{q}{2}}.$$
Since $\frac{H(z,s)}{J_s}\leq1,$ the product of the last two factors is bounded from above, which implies
$$|\nabla_zH(z,s)|^q\leq\frac{C_8}{(s-t)^{\frac{((m+1)q-m)}{2}}}H(z,s).$$
Applying this gradient bound and H\"older inequality, we deduce similarly as before
\begin{align*}
I_2&\leq~\Big(\int_t^{t_0+2\beta^2}\int_{B(x_0,s,\alpha)}|u|^p(z,s)dg_sds\Big)^{\frac{1}{p}}\cdot\Big(\int_t^{t_0+2\beta^2}\int_{B(x_0,s,\alpha)}|\nabla_zH|^q(z,s)dg_sds\Big)^{\frac{1}{q}}\\
&\leq~\Big(\int_t^{t_0+2\beta^2}\int_{B(x_0,s,\alpha)}H(z,s)\frac{C_8}{(s-t)^{\frac{((m+1)q-m)}{2}}}dg_sds\Big)^{\frac{1}{q}}\cdot A||u||_{L^2(Q^+(x_0,t_0,1))}\\
&\leq~ C_9\Big(\int_t^{t_0+2\beta^2}\frac{ds}{(s-t)^{\frac{((m+1)q-m)}{2}}}\Big)^{\frac{1}{q}}\cdot A||u||_{L^2(Q^+(x_0,t_0,1))}\\
&\leq~C_{10}A||u||_{L^2(Q^+(x_0,t_0,1))},
\end{align*}
where we used $\frac{((m+1)q-m)}{2}<1$ when $p>m+2.$ Combining the above, since $\psi(x,t)>\rho$ we obtain $$|u|(x,t)\leq\rho^{-1}(2C_0C_5+2C_0C_{10})A||u||_{L^2(Q^+(x_0,t_0,1))}$$
for all $(x,t)\in Q^+(x_0,t_0,\beta),$ which is the assertion with $\beta$ in place of $\alpha.$ This completes the proof.
\end{proof}

\section{Bounds on the heat kernel and its gradient}\label{sec5}

Following the argument of Bamler-Zhang \cite{BamZh17}, we are now ready to prove bounds for the fundamental solution of the heat equation and its gradient. A similar bound is proved in \cite{Bam20,BamZh17,ZhQ12}. An integral upper bound is proven in \cite{HNab14}, which is very important for us.

We first prove an integral-type heat kernel lower bound, which is essential for deriving an integral upper bound for the heat kernel. This type of bound may also be of independent interest and have further applications \cite{Hall23,TZhZ21}. It states that the conjugate heat kernel measure based at \((x,t)\) concentrates at scale \(\sqrt{t-s}\) around the point \(x\) itself.


\begin{lemma}\label{hklb}
Under the assumption of {\rm Theorem \ref{localdistortion}}, there exist uniform constants $\Lambda<\infty$ and $\kappa_0>0,$ depending only on $n,\omega_0,$ such that for any $(x,t)\in X\times(0,T)$ and any $s\in[0,t)$ with
\begin{equation}\label{hklbrange}
\Lambda^2(t-s)\leq\min\{t,(T-t)^2\}
\end{equation}
we have
\begin{equation}\label{hklb1}
\int_{B(x,s,\Lambda\sqrt{t-s})}H(x,t;z,s)dg_s(z)\geq \kappa_0>0.
\end{equation}
Under the type-I assumption \eqref{typeIscalar} the same holds with $(T-t)^2$ replaced by $T-t$ in \eqref{hklbrange}, with constants depending on $C_0$ in {\rm Theorem \ref{hke2}}.
\end{lemma}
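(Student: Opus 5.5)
Fix $(x,t)$ and $s$ as in \eqref{hklbrange}, set $r:=\sqrt{t-s}$, and write $\nu_{s'}:=H(x,t;\cdot,s')\,dg_{s'}$ for the conjugate heat kernel measure. The plan is to show that this probability measure (up to the factor $e^{\pm R_0 r^2}$ from Lemma \ref{scalarlbunif}) cannot lose much mass outside the ball $B(x,s,\Lambda r)$. The natural tool is a cut-off function used as a test function against $\nu$. I will pair it with the \emph{forward} heat equation: for any $\phi$ defined on $X\times[s,t]$ we have
$$\phi(x,t)-\int_X\phi(\cdot,s)\,d\nu_s=\int_s^t\int_X(\partial_{s'}-\Delta)\phi\,d\nu_{s'}\,ds'.$$
This holds because $\frac{d}{ds'}\int\phi\, d\nu_{s'}=\int(\partial_{s'}\phi-\Delta\phi)\,d\nu_{s'}$ by the conjugate equation \eqref{cheatkernel}.

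First I use the cut-off of Theorem \ref{cutoffftn}, centred at $(x,t)$ at scale $r_0=\Lambda' r$ with $\tau=t-s$. This requires $t-s\leq\rho^2r_0^2$, so I take $\Lambda'=\rho^{-1}$, and it requires $r_0\leq\min\{\sqrt t,c_0(T-t)\}$, which is exactly what \eqref{hklbrange} guarantees once $\Lambda\geq\Lambda'/\min\{1,c_0\}$. The properties of the cut-off then give:
\begin{itemize}
\item $\phi(x,t)>\rho$;
\item $|(\partial-\Delta)\phi|<r_0^{-2}=\rho^2/r^2$;
\item $\phi\leq1$, with $\operatorname{supp}\phi(\cdot,s)\subset B(x,t,r_0)$.
\end{itemize}
Substituting into the identity, and using $\nu_{s'}(X)\leq e^{R_0(t-s')}\leq C$, I obtain
$$\int_{B(x,t,r_0)}H(x,t;\cdot,s)\,dg_s\geq\rho-C\rho^2.$$
The constant $\rho$ is only given as ``some'' constant, so to make $C\rho^2$ small relative to $\rho$ I would instead shrink $\tau=\epsilon r_0^2$ with $\epsilon$ small. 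Concretely, apply Theorem \ref{cutoffftn} at scale $r_0=\epsilon^{-1/2}r$, with $\tau=t-s=\epsilon r_0^2\leq\rho^2r_0^2$. The error term is then at most $C\tau r_0^{-2}=C\epsilon$, while the main term stays $>\rho$. Choosing $\epsilon\leq\min\{\rho^2,\rho/(2C)\}$ gives mass $\geq\rho/2$ in $B(x,t,r_0)$ at time $s$.

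The second step converts the ball at time $t$ into a ball at time $s$. Here I invoke the distance comparison, Corollary \ref{distcomparison} (equivalently Theorem \ref{localdistortion} at scale $r_0$ with $|t-s|\leq\alpha r_0^2$, after shrinking $\epsilon\leq\alpha$). Every $z$ with $d_t(x,z)<r_0$ then satisfies $d_s(x,z)<\alpha^{-1}r_0$, so $B(x,t,r_0)\subset B(x,s,\alpha^{-1}r_0)$. Setting $\Lambda:=\alpha^{-1}\epsilon^{-1/2}$, possibly enlarged so that the admissibility conditions of both theorems follow from \eqref{hklbrange}, yields \eqref{hklb1} with $\kappa_0=\rho/2$.

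One point needs separate checking: the ranges. Theorem \ref{localdistortion} at base time $t$ needs $r_0\leq\sqrt{t}$ and $r_0\leq c_0(T-t)$, and these follow from $\Lambda^2(t-s)\leq\min\{t,(T-t)^2\}$ for $\Lambda\geq\epsilon^{-1/2}\max\{1,c_0^{-1}\}$. When $r_0$ falls in the bounded-geometry regime near time $0$, Remark \ref{bddgeom} covers it. The type-I version is identical, using the type-I ranges $r_0\leq c_0\sqrt{T-t}$ of Theorems \ref{localdistortion} and \ref{cutoffftn}, which correspond to replacing $(T-t)^2$ by $T-t$.

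The main obstacle I anticipate is the bookkeeping in the first step. One has to ensure the error from $(\partial-\Delta)\phi$ is genuinely smaller than the lower bound $\phi(x,t)>\rho$. The point is that Theorem \ref{cutoffftn} gives $\phi>\rho$ uniformly in $\tau\leq\rho^2r_0^2$, while the derivative bounds $r_0^{-2}$ do not improve as $\tau$ shrinks. Taking $\tau=\epsilon r_0^2$ with $\epsilon\ll\rho$ is what makes the argument close.
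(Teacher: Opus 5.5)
Your proposal is correct and follows the paper's proof essentially step for step. The paper also tests $\nu_{x,t;\cdot}$ against the cut-off of Theorem \ref{cutoffftn} at $(x,t)$ with scale $r_0=\Lambda_1\sqrt{t-s}$, $\Lambda_1=\max\{\rho^{-1},\sqrt{2\rho^{-1}}\}$ (your $\epsilon=\Lambda_1^{-2}$), obtains mass $\geq\rho/2$ in $B(x,t,r_0)$, and passes to a ball at time $s$ via Corollary \ref{distcomparison}, with radius $\sqrt{\alpha^{-1}+\alpha^{-2}\Lambda_1^2}\,\sqrt{t-s}$ and $\Lambda$ enlarged by $\Lambda_1/c_0$ and $c_2^{-1/2}$.

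One harmless slip: $\nu_{x,t;s'}$ is exactly a probability measure, since $\int_XH(x,t;z,s')\,dg_{s'}(z)=1$ in the conjugate variable. The $e^{R_0(t-s')}$ factor arises only when integrating in the forward variable, so your constant $C$ equals $1$.
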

\begin{proof}
Let $\rho\in(0,1)$ and $c_0$ be the constants of Theorem \ref{cutoffftn} and let $\alpha\in(0,1),~c_2$ be the constants of Corollary \ref{distcomparison}. Write
$$\Lambda_1:=\max\{\rho^{-1},\sqrt{2\rho^{-1}}\},~~~~\Lambda:=\max\Big\{\sqrt{\alpha^{-1}+\alpha^{-2}\Lambda_1^2},~\frac{\Lambda_1}{c_0},~\frac{1}{\sqrt{c_2}}\Big\}\geq\Lambda_1.$$
$$r_0:=\Lambda_1\sqrt{t-s}~~ \text{and}~~~\tau:=t-s.$$
Then we have
$$r_0\leq\Lambda\sqrt{t-s}\leq\sqrt{t}~~~\text{and}~~~r_0\leq c_0\Lambda\sqrt{t-s}\leq c_0(T-t).$$
Hence $\tau\leq\rho^2r_0^2$ by the choice of $\Lambda_1.$
Applying Theorem \ref{cutoffftn}, at the point $x$ at time $t$ and at scale $r_0,$ there exists a function $\phi\in C^{\infty}(X\times[s,t])$ with
$$0\leq\phi<1;~~\phi>\rho~\text{on}~P(x,t,\rho r_0,-\tau);~~\phi=0~\text{on}~(X\setminus B(x,t,r_0))\times[s,t];$$
$$|\frac{\partial}{\partial l}\phi|+|\Delta\phi|<r_0^{-2}~~\text{everywhere}.$$
In particular $$\left|\Box\phi\right|=\left|\Big(\frac{\partial}{\partial l}-\Delta\Big)\phi\right|<r_0^{-2}~~~\text{on}~~ X\times[s,t].$$

Let $d\nu_{x,t;l}=H(x,t;\cdot,l)dg_l$ be the conjugate heat kernel measure based at $(x,t)$. Then for every $\phi\in C^{\infty}(X\times[s,t])$
\begin{equation}\label{nu-derivative}
\frac{d}{dl}\int_X\phi(\cdot,l)d\nu_{x,t;l}=\int_X\Box\phi~d\nu_{x,t;l},~~~~\lim_{l\nearrow t}\int_X\phi(\cdot,l)d\nu_{x,t;l}=\phi(x,t),
\end{equation}
which is nothing but the duality (\ref{heatkernel}), (\ref{cheatkernel}) between the heat and the conjugate heat operator. Integrating (\ref{nu-derivative}) from $s$ to $t$ and using $\phi(x,t)>\rho$ we obtain
$$\int_X\phi(\cdot,s)d\nu_{x,t;s}=\phi(x,t)-\int_s^t\int_X\Box\phi~d\nu_{x,t;l}dl>\rho-\frac{t-s}{r_0^{2}}=\rho-\frac{1}{\Lambda_1^2}\geq\frac{\rho}{2},$$
by the choice of $\Lambda_1.$ Since $0\leq\phi\leq1$ and $\phi(\cdot,s)$ is supported in $B(x,t,r_0),$ this gives
$$\int_{B(x,t,r_0)}H(x,t;z,s)dg_s(z)\geq\frac{\rho}{2}.$$
It remains to replace the ball $B(x,t,r_0)$ at time $t$ by a ball at time $s.$ This is precisely Corollary \ref{distcomparison}, which applies to the pair $(s,t)$ since $$t-s\leq\Lambda^{-2}(T-t)^2\leq c_2(T-t)^2:$$
for $z\in B(x,t,r_0)$ we have
$$\frac{d_s^2(x,z)}{t-s}\leq\alpha^{-1}+\alpha^{-2}\frac{d_t^2(x,z)}{t-s}\leq\alpha^{-1}+\alpha^{-2}\Lambda_1^2\leq\Lambda^2.$$
Hence
$$B(x,t,r_0)\subset B(x,s,\Lambda\sqrt{t-s}),$$
by the choice of $\Lambda.$ This proves (\ref{hklb1}) with $\kappa_0=\frac{\rho}{2}.$ The type-I case is identical, using the type-I versions of Theorem \ref{cutoffftn} and Corollary \ref{distcomparison}.
\end{proof}
\begin{remark}
Under the additional type-I assumption \rm{(\ref{typeIscalar})}, Lemma \rm{\ref{hklb}} holds for all $0\leq s<t<T$ with $t-s\leq s,$ and it also follows from Hallgren's Gaussian lower bound together with Bamler's volume noncollapsing estimate; see Remark \rm{\ref{remark-alternative-hklb}}.
\end{remark}

\textbf{Proof~of~}Theorem \ref{hke1}: We will divide the proof into the following steps.

{\bf Step 1:} By Perelman's monotonicity and $T<\infty$ we have the uniform lower bound of the $\mu$-entropy (see Theorem \ref{ZhZh22}), and by Theorem \ref{GPSS22} the volume is noncollapsing.

{\bf Step 2:} Following Qi Zhang's argument, we have the on-diagonal upper bound for the heat kernel. Here, we present a more general ultracontractivity property of Ricci flow proved by the authors in \cite{ZhZhl22} (see Theorem \ref{ZhZl21}). Together with the uniform lower scalar curvature bound of Lemma \ref{scalarlbunif}, this gives a constant $C<+\infty$ such that
\begin{align}\label{on-diagub}
H(x,t;y,l)\leq \frac{C}{(t-l)^{m/2}},~~~~l\in[s,t).
\end{align}

{\bf Step 3:} In the spirit of the proof of the heat kernel estimates developed in \cite{BamZh17}, we obtain the Gaussian type upper bound for $H.$ Let $\Lambda$ be the constant of Lemma \ref{hklb}, let $c_0$ be the constant of Theorems \ref{localdistortion}, \ref{cutoffftn} and $c_2$ that of Corollary \ref{distcomparison}. We fix
$$c_1:=\frac{1}{2}\min\Big\{\frac{1}{\Lambda^2},~c_2,~2\Lambda^2c_0^2,~\frac{2}{(1+2\Lambda^2)T}\Big\},$$
and we assume from now on that (\ref{scalerange}) holds. For every $l\in[s,\frac{s+t}{2}],$ we have $$\Lambda^2(t-l)\leq(T-t)^2.$$
Hence Lemma \ref{hklb} applies to the pair $(l,t)$ as soon as $\Lambda^2(t-l)\leq t$. Also we have $$l-s\leq c_2(T-l)^2.$$ Therefore Corollary \ref{distcomparison} applies to the pair $(s,l)$. Define $$r:=\sqrt{(t-s)/(2\Lambda^2)}.$$ Then $$r\leq c_0(T-s).$$
Thus Proposition \ref{supcheateqa} applies at $(y,s)$ at the scale $r$ as soon as $r\leq\sqrt{s}.$ We may moreover assume that
$$2\Lambda^2(t-s)\leq s.$$
Indeed, if $2\Lambda^2(t-s)>s,$ then $$t<(1+2\Lambda^2)(t-s)\leq(1+2\Lambda^2)c_1(T-t)^2,$$ and this forces $t\leq\frac{T}{2}$ (if $t>\frac{T}{2}$ the right hand side is less than $(1+2\Lambda^2)c_1\frac{T^2}{4}\leq\frac{T}{2},$ a contradiction), in which case all the estimates of the theorem follow from Remark \ref{bddgeom}.

Applying Corollary \ref{distcomparison} to the pair of times $(s,l)$ and using $$t-l\geq\frac{t-s}{2}~~~ \text{and}~~~l-s\leq\frac{t-s}{2}$$ we obtain a uniform constant $B<\infty$ such that
\begin{align}\label{l1l2}
\frac{d_{l}^2(x,y)}{t-l}\geq B^{-1}\frac{d_{s}^2(x,y)}{t-s}-B,~~~l\in[s,\tfrac{s+t}{2}].
\end{align}
Next let us recall the following Gaussian concentration inequality for the conjugate heat kernel measure obtained by Hein-Naber \cite[Theorem 1.13 and (1.20)]{HNab14}: for $l\in[s,t)$ and any two Borel sets $A_1,A_2\subset X$,
\begin{align}\label{hngci}
\Big(\int_{A_1}H(x,t;z,l)dg_l(z)\Big)\cdot\Big(\int_{A_2}H(x,t;z,l)dg_l(z)\Big)\leq \exp\Big(-\frac{d^2_l(A_1,A_2)}{8(t-l)}\Big).
\end{align}
We apply (\ref{hngci}) with $A_1=B(x,l,\Lambda\sqrt{t-l})$ and $A_2=B(y,l,\sqrt{t-l}).$ Since $$d_l(A_1,A_2)\geq\big(d_l(x,y)-(\Lambda+1)\sqrt{t-l}\big)_+$$ and $$(a-b)_+^2\geq\frac{1}{2}a^2-b^2 ~~~\text{for}~~ a,b\geq0,$$
then Lemma \ref{hklb} and (\ref{l1l2}) give
\begin{align}\label{hngci1}
\int_{B(y,l,\sqrt{t-l})}H(x,t;z,l)dg_l(z)\leq C_2\exp\big(-\frac{d^2_l(x,y)}{16(t-l)}\big)\leq C_3\exp\big(-\frac{d^2_s(x,y)}{C_3(t-s)}\big).
\end{align}
Together with (\ref{on-diagub}) we get
$$\int_{B(y,l,\sqrt{t-l})}H^2(x,t;z,l)dg_l(z)\leq\frac{C_4}{(t-l)^{m/2}}\exp(-\frac{d^2_s(x,y)}{C_3(t-s)}).$$
Integrating over the time-interval $[s,(s+t)/2],$ we obtain that
$$\int_s^{(s+t)/2}\int_{B(y,l,\sqrt{t-l})}H^2(x,t;z,l)dg_l(z)dl\leq\frac{C_4(t-s)}{(t-s)^{m/2}}\exp(-\frac{d^2_s(x,y)}{C_3(t-s)}).$$
Note that $t-l\geq(t-s)/2$ on the time-interval $[s,(s+t)/2].$ Hence for $$r=\sqrt{(t-s)/(2\Lambda^2)}$$ we have $r\leq\sqrt{s}$ and $$Q^+(y,s,r)\subset\bigcup_{l\in[s,(s+t)/2]}B(y,l,\sqrt{t-l})\times\{l\}.$$
Therefore,
$$\int_{Q^+(y,s,r)}H^2(x,t;z,l)dg_l(z)dl<\frac{C_4}{(t-s)^{m/2-1}}\exp(-\frac{d_s^2(x,y)}{C_3(t-s)}).$$
Combining this estimate with the mean value inequality (Proposition \ref{supcheateqa}), applied to the function
$$(z,l)\mapsto H(x,t;z,l),$$
which solves the conjugate heat equation, we obtain
$$H^2(x,t;y,s)\leq\frac{C_6}{r^{m+2}}\int_{Q^+(y,s,r)}H^2(x,t;z,l)dg_l(z)dl<\frac{C_7}{(t-s)^m}\exp(-\frac{d_s^2(x,y)}{C_8(t-s)}).$$
This gives the upper bound (\ref{gthub}) on $H(x,t;y,s).$

{\bf Step 4:} We prove the upper bound on the gradient of the heat kernel. Fix $(y,s),$ regard $H(x,t)=H(x,t;y,s)$  as a solution to the forward heat equation on the time-interval $[\frac{t+s}{2},t]$. According to the gradient estimate (\ref{diffhar1}), we have
$$\frac{|\nabla_xH(x,t;y,s)|^2}{H^2(x,t;y,s)}\leq\frac{C_2}{t-s}\log\frac{J}{H(x,t;y,s)},$$
where $$J:=\sup_{X\times[\frac{t+s}{2},t]}H(\cdot,\cdot;y,s)\leq\frac{2^{m/2}C}{(t-s)^{m/2}}$$ by (\ref{on-diagub}). Since $\sigma\log\frac{1}{\sigma}\leq e^{-1}$ for $\sigma\in(0,1],$ we deduce, using Step 3,
\begin{align*}
|\nabla_xH(x,t;y,s)|^2&\leq\frac{C_2}{t-s}J\cdot H(x,t;y,s)\cdot\frac{H(x,t;y,s)}{J}\log\frac{J}{H(x,t;y,s)}\\
&\leq\frac{C_3}{t-s}J\cdot H(x,t;y,s)\\
&\leq\frac{C_4}{(t-s)^{2n+1}}\exp(-\frac{d_s^2(x,y)}{C_5(t-s)}).
\end{align*}
This yields the desired bound (\ref{tiduub}) for $|\nabla_xH|.$

{\bf Step 5:} The Gaussian lower bound in (\ref{gthub}) and the gradient bound (\ref{gradientup1}) are proved in Proposition \ref{Gauss-heat-kernel-ub-lb} below, see Steps 4 and 5 of its proof. The proof uses only Perelman's Harnack inequality in the form (\ref{heatke3}), the type-II bound (\ref{type2-scalar}), Corollary \ref{distcomparison} and Bamler's gradient estimate (\ref{graheatuppernash}), and it is independent of the previous steps. This completes the proof of Theorem \ref{hke1}. \qed

\section{Bounds on the heat kernel and its gradient II}\label{sec6}

In this section, we provide an alternative proof of the Gaussian-type upper bound in (\ref{gthub}). This proof uses neither the Hein-Naber inequality nor the mean value inequalities of Section \ref{sec4}. Instead, it depends on the integral lower bound of Lemma \ref{hklb} and on Bamler's estimates in terms of the pointed Nash entropy. We then prove the Gaussian lower bound and the gradient bound of Theorem \ref{hke1}, as well as Theorem \ref{hke2}. First, we recall a conjugate heat kernel estimate obtained in \cite{Hall23} (see also \cite[Theorem 1.4]{BamZh17}, \cite[Theorem 2.1]{CaoZh11}).

\begin{lemma}{\rm(Lemma 3.1 in \cite{Hall23})}\label{type1-hk-ublb}
Let $\big(M^m,(g_t)_{t\in[0,T)}\big)$ be a closed solution of Ricci flow satisfying $$\nu[g_0,2T]\geq-C_0~~ \text{and} ~~|R|(x,t)\leq \frac{C_0}{T-t}~~\forall~(x,t)\in M\times[0,T).$$
Then there exist uniform constants $C_1=C_1(C_0,m),C_2=C_2(C_0,m)$ such that for any $x,y\in M$ and $\frac{3T}{4}\leq s<t<T$ we have
\begin{equation}\label{heatkeublb}
\frac{1}{C_1(t-s)^\frac{m}{2}}\exp\left(-\frac{C_2d_s^2(x,y)}{t-s}\right)\leq H(x,t;y,s)\leq\frac{C_1}{(t-s)^\frac{m}{2}}\exp\left(-\frac{d_s^2(x,y)}{C_2(t-s)}\right).
\end{equation}
\end{lemma}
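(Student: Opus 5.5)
The plan is to reduce the type-I setting to the bounded scalar curvature setting of Bamler--Zhang, first on scales $\sqrt{t-s}\lesssim\sqrt{T-t}$, and then to propagate the estimates to all scales with the reproduction formula. Throughout, $\frac{3T}{4}\leq s<t<T$, and $C_0$ is increased so that also $\nu[g_0,2T]\geq-C_0$.

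\textbf{Short scales.} Suppose $t-s\leq c(T-t)$ for a small $c=c(C_0)$. Then $T-\tau\geq T-t$ for $\tau\in[s,t]$, so
$$|R|\leq\frac{C_0}{T-t}\leq\frac{cC_0}{t-s}$$
on $M\times[s,t]$. Rescale parabolically by $(t-s)^{-1}$. The rescaled flow lives on a time interval of length one and has $|R|\leq cC_0$. Its $\nu$-entropy is still bounded below, since $\nu$ only improves under the flow and the scale-invariant version of $\nu[g_0,2T]$ controls $\nu[g_s,\tau]$ for $\tau\lesssim T-s$.

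For the rescaled flow, \cite[Theorem 1.4]{BamZh17} (bounded scalar curvature) gives the two-sided Gaussian bound at unit scale. The one technical point is the hypothesis $r_0\leq\sqrt{t_0}$ of \cite{BamZh17}. It holds because $s\geq 3T/4\gg t-s$, so enough earlier flow is available.

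The same conclusion can be read off from Lemma \ref{hklb}, Proposition \ref{supcheateqa} and Step 3 of the proof of Theorem \ref{hke1}, whose type-I versions hold with the constraint $t-s\leq c(T-t)$.

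\textbf{Long scales, lower bound.} Now let $t-s>c(T-t)$. The lower bound follows directly from \eqref{heatke3}. The scalar curvature integral along the worldline of $y$ is bounded by $2C_0$, by the computation preceding \eqref{heatke4}. This gives
$$H(x,t;y,s)\geq\frac{c_3}{(t-s)^{m/2}}\exp\Big(-C\frac{d_t^2(x,y)}{t-s}\Big).$$
It remains to replace $d_t$ by $d_s$. For this I would use the Hein--Naber concentration \eqref{hngci} together with a type-I distance lower bound of the form $d_t\leq C(d_s+\sqrt{t-s})$. I expect this bound to come from Bamler's $W_1$-monotonicity combined with the concentration of $\nu_{x,t;s}$ near $x$ from the type-I Lemma \ref{hklb}. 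Alternatively, one can chain the short-scale lower bound along a $d_s$-geodesic, using the reproduction formula at intermediate times, in the standard manner of the proof of \cite[Theorem 1.4]{BamZh17}.

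\textbf{Long scales, upper bound.} Use the pointed Nash entropy. Under type I, $\mathcal N_{x,t}(t-s)\geq-C$ uniformly, by the $\mu$ lower bound together with the scalar integral bound. Bamler's upper bound \cite[Theorem 7.2]{Bam20} then gives
$$H(x,t;y,s)\leq\frac{C}{(t-s)^{m/2}}\exp\Big(-\frac{d_s^2(z,y)}{C(t-s)}\Big),$$
where $(z,s)$ is an $H_m$-center of $(x,t)$.

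The key step, and the one I expect to be the main obstacle, is to show $d_s(x,z)\leq C\sqrt{t-s}$ at all scales. In Bamler's theory an $H_m$-center lies within $\sqrt{\mathrm{Var}}\lesssim\sqrt{t-s}$ of the bulk of $\nu_{x,t;s}$. So it suffices that $\nu_{x,t;s}$ gives measure $\geq\kappa_0$ to the ball $B(x,s,C\sqrt{t-s})$, which is exactly the type-I Lemma \ref{hklb} at all scales. I would try to obtain this from the lower bound of the previous step together with Bamler's volume noncollapsing $|B(x,s,\sqrt{t-s})|_s\geq c(t-s)^{m/2}$, which follows from the entropy bound. Integrating the Gaussian lower bound over that ball yields the required mass, and $(a-b)^2_+\geq\frac12a^2-b^2$ finishes the upper bound.
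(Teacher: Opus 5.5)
Your short-scale step is fine, but the long-scale lower bound has a genuine gap, and your long-scale upper bound inherits it. Perelman's bound \eqref{heatke3} is in terms of $d_t(x,y)$. Converting it to $d_s(x,y)$ requires $d_t\le C(d_s+\sqrt{t-s})$ in the regime $t-s\gg T-t$; this is an \emph{upper} bound on $d_t$, not a lower one. Nothing you cite gives it:
\begin{itemize}
\item The paper has it only for $t-s\le c_2(T-t)$ (Corollary \ref{distcomparison}), and leaves even the additive version at all scales open (Remark \ref{remark-holder}).
\item Monotonicity \eqref{8-W1mono} gives $d^{(s)}_{W_1}(\nu_{x,t;s},\nu_{y,t;s})\le d_t(x,y)$. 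Combined with concentration, this yields $d_s\le d_t+C\sqrt{t-s}$, which is Lemma \ref{dis2lb} and the opposite inequality.
\item The Hein--Naber bound \eqref{hngci} only bounds mass from above.
\item Chaining does not close. The short-scale bound applies only to links $[\tau',\tau]$ with $\tau-\tau'\lesssim T-\tau$, so links near $t$ have length $\lesssim T-t\ll t-s$. You then need about $\log\frac{t-s}{T-t}$ links, each costing a fixed factor, and chaining along a $d_s$-geodesic again requires comparing $d_s$ with distances at times near $t$.
\end{itemize}
Your upper bound rests on the mass bound $\nu_{x,t;s}(B(x,s,C\sqrt{t-s}))\ge\kappa_0$, which you derive from this unproved lower bound. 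Importing it from Remark \ref{remark-alternative-hklb} would be circular, since that remark uses the present lemma at long scales.

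The missing idea is never to compare distances across the whole interval $[s,t]$. Perelman's estimate \eqref{heatkelb2} along the worldline of $x$ gives $H(x,t;x,s)\ge c(t-s)^{-m/2}$ at every scale, because the scalar integral is at most $C_0$, as in \eqref{typeI-Lintegral}. Taking $y'=x$ in \eqref{heatuppernash2} and using $\mathcal N_{x,t}(t-s)\ge\nu[g_0,2T]$ then gives the Gaussian upper bound in $d_s(x,y)$ directly, with no center or distortion argument. The same comparison with \cite[Theorem 7.2]{Bam20} shows that an $H_m$-center $(z_a,a)$ of $(x,t)$ satisfies $d_a(x,z_a)\le C\sqrt{t-a}$. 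Chebyshev with $\mathrm{Var}\le H_m(t-a)$ then gives $\nu_{x,t;a}\big(B(x,a,\Lambda\sqrt{t-a})\big)\ge\frac12$ for every $a\in[s,t)$.

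For the lower bound, split at $a=s+\epsilon(t-s)$. Since $T-a\ge(1-\epsilon)(t-s)$, the pair $(s,a)$ is short-scale for $\epsilon$ small. Then
\[
H(x,t;y,s)\ge\int_{B(x,a,\Lambda\sqrt{t-a})}H(x,t;z,a)\,H(z,a;y,s)\,dg_a(z),
\]
combined with the short-scale Gaussian lower bound for $H(z,a;y,s)$ and the short-scale comparison of $d_s$ with $d_a$, gives the claim with $C_2$ of order $\epsilon^{-1}$. This is essentially the reproduction-formula and integral-lower-bound route that the paper attributes to \cite[Lemma 3.1]{Hall23}. The paper itself only rescales to $\tilde g(r)=\frac2Tg(T+\frac T2r)$ and cites that lemma.
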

\begin{proof}
The rescaled flow $$\tilde{g}(r):=\frac{2}{T}g\big(T+\frac{T}{2}r\big),~r\in[-2,0),$$ satisfies $$\nu[\tilde{g}(-2),4]=\nu[g_0,2T]\geq-C_0$$ and $$|\tilde{R}|(x,r)=\frac{T}{2}|R|(x,t)\leq\frac{T}{2}\cdot\frac{C_0}{T-t}=\frac{C_0}{|r|},$$ where $t=T+\frac{T}{2}r.$ Hence \cite[Lemma 3.1]{Hall23} applies to $\tilde{g}$ and gives the two sided Gaussian bound for $$-\frac{1}{2}\leq\tilde{s}<\tilde{t}<0,$$ that is, for $$\frac{3T}{4}\leq s<t<T.$$ Writing $\tilde{s}=2(s-T)/T$ and $\tilde{t}=2(t-T)/T$, the heat kernels satisfy
$$\widetilde H(x,\tilde t;y,\tilde s)=(T/2)^{m/2}H(x,t;y,s),\qquad \widetilde d_{\tilde s}^{\,2}=(2/T)d_s^2.$$
Thus the factors introduced by rescaling cancel, and the constants may be chosen to depend only on $C_0,m$. In \cite[Lemma 3.1]{Hall23}, the estimates are first obtained on intervals on which the scalar curvature is bounded after rescaling. The reproduction formula at the midpoint, an integral lower bound for the conjugate heat kernel, and the distance distortion estimate then extend the lower bound to all $-1/2\leq\tilde s<\tilde t<0$. The upper bound follows from the Hein--Naber concentration inequality and the conjugate mean value inequality. We refer the readers to \cite[Lemma 3.1]{Hall23} for the details. 
\end{proof}

Next, we will use Bamler's estimates in \cite[Theorem 7.1, Theorem 7.2, Theorem 7.5]{Bam20} of upper bounds on the heat kernel and its gradient.
\begin{lemma}{\rm(Theorem 7.1, Theorem 7.2, Theorem 7.5 in \cite{Bam20})}\label{Bamlerheatupb}
Let $\big(M^m,(g(t))_{t\in I}\big)$ be a Ricci flow defined on a compact $m$-dimensional manifold. If $[s,t]\subset I$ and $R\geq R_{min}$ on $M\times[s,t],$ then
\begin{align}\label{heatuppernash}
H(x,t;y,s)\leq\frac{C_0\big(m,R_{min}(t-s)\big)}{(t-s)^\frac{m}{2}}\exp\big(-\mathcal{N}_{x,t}(t-s)\big),
\end{align}
for any $y,y'\in M$
\begin{align}\label{heatuppernash2}
H(x,t;y,s)H(x,t;y',s)\leq\frac{C_0\big(m,R_{min}(t-s)\big)\exp\big(-2\mathcal{N}_{x,t}(t-s)\big)}{(t-s)^m}\exp\Big(-\frac{d_s^2(y,y')}{9(t-s)}\Big),
\end{align}
and
\begin{align}\label{graheatuppernash}
\frac{|\nabla_xH|(x,t;y,s)}{H(x,t;y,s)}\leq\frac{C_0\big(m,R_{min}(t-s)\big)}{(t-s)^\frac{1}{2}}\sqrt{\log\Big(\frac{C_0\exp\big(-\mathcal{N}_{x,t}(t-s)\big)}{(t-s)^\frac{m}{2}H(x,t;y,s)}\Big)}
\end{align}
where $\mathcal{N}_{x,t}$ denotes the pointed Nash entropy based at $(x,t).$
\end{lemma}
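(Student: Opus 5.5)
Lemma \ref{Bamlerheatupb} is quoted from \cite{Bam20}, so the plan is mainly to check that Bamler's hypotheses and normalizations match ours, and to recall the mechanism of his proof so that the dependence of the constants is transparent. First, the flow $g(t)=2g_{\omega(t)}$ satisfies $\partial_tg=-2\Ric_g$ on a compact manifold of real dimension $m=2n$. It therefore has the standard real normalization used in \cite{Bam20}, and $H$, $d_s$, $dg_s$ and $\mathcal{N}_{x,t}$ are exactly Bamler's objects, as explained in the Conventions of Section \ref{preliminary}. Second, all three inequalities are invariant under the parabolic rescaling $g\mapsto\lambda^{-2}g(\lambda^2\cdot)$: the factors $(t-s)^{-m/2}$ and $(t-s)^{-1/2}$ absorb the change of $H$ and of $|\nabla H|$, and $\mathcal{N}$, $d_s^2/(t-s)$ are scale invariant. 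Hence we may normalize $t-s=1$, and the only remaining parameter is the rescaled lower bound $R_{min}(t-s)$. This explains the stated dependence $C_0(m,R_{min}(t-s))$.

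For the proof itself I would follow Bamler's route in three steps. \textbf{Step A} is the basic gradient estimate \cite[Theorem 4.1]{Bam20}. For a solution $u$ of the heat equation with values in $(0,1)$, one has $|\nabla\Phi^{-1}(u)|\le(t-s)^{-1/2}$, where $\Phi$ is the Gaussian distribution function; a consequence is a Hölder/log-type oscillation bound for heat kernels. \textbf{Step B} is \eqref{heatuppernash}. Use the reproduction formula at the midpoint $(s+t)/2$ and the lower bound $R\ge R_{min}$ to control $\int H\,dg$. The entropy identities of Remark \ref{nashentropyremark} give that the conjugate heat kernel measure $\nu_{x,t;s}$ cannot concentrate more than $e^{-\mathcal N_{x,t}(t-s)}$ allows. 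Combined with Step A, this shows that a large value of $H(x,t;\cdot,s)$ at one point forces a large value on a set of definite $\nu$-mass, contradicting $\int f\,d\nu=\mathcal N+n$. This yields \eqref{heatuppernash}. \textbf{Step C} is \eqref{heatuppernash2}. It follows by combining \eqref{heatuppernash} at two points $y,y'$ with the Gaussian concentration of $\nu_{x,t;s}$ \cite{HNab14}, in the form \eqref{hngci}, applied to small balls around $y$ and $y'$; the sup bound from Step B controls the $H$-mass of these balls from below in terms of the point values. Finally, for \eqref{graheatuppernash}, apply Step A to $u=H(\cdot,\cdot;y,s)/J$ on $[\frac{s+t}{2},t]$, where $J$ is the bound from \eqref{heatuppernash} (entropy-based and using the monotonicity of $\mathcal N$ in its base point). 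Then use $|\nabla\Phi^{-1}(u)|\le C/\sqrt{t-s}$ and the asymptotics $\Phi'(\Phi^{-1}(u))\approx u\sqrt{\log(1/u)}$ as $u\to0$.

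The main obstacle is Step B, the passage from an entropy bound to a pointwise bound. It requires the lower volume/concentration control of $\nu_{x,t;s}$ near the point where $H$ is maximal, which comes from the delicate Step A estimate rather than from Sobolev inequalities, and the constants must be kept uniform. In this paper I would not reprove it: I would verify the hypotheses above and invoke \cite[Theorems 7.1, 7.2, 7.5]{Bam20} directly. The only case to note is that if the Gaussian factor in \eqref{heatuppernash2} is stated with a different numerical constant in \cite{Bam20}, then $9$ is obtained by adjusting $C_0$.
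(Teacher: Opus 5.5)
Your proposal matches the paper: the paper gives no argument of its own and simply quotes \cite[Theorems 7.1, 7.2, 7.5]{Bam20}, and your normalization and scale-invariance checks are all that is needed. If you keep the heuristic sketch, be aware that it is not sound as written, for three reasons:
\begin{itemize}
\item Step A controls $H(\cdot,t;y,s)$ only in its forward variable. So it cannot spread a large value of $y\mapsto H(x,t;y,s)$ over a set of definite $\nu_{x,t;s}$-mass, as Step B requires.
\item In Step C, a sup (upper) bound cannot bound the $H$-mass of a ball from below.
\item The denominator $9$ in \eqref{heatuppernash2} has to be read off from Bamler's statement. If his exponent has denominator $8+\varepsilon$, take $\varepsilon\le1$. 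A denominator larger than $9$ cannot be lowered by enlarging $C_0$, because $d_s^2/(t-s)$ is unbounded.
\end{itemize}
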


The following proposition is essentially a combination of Bamler's heat-kernel estimates (Lemma \ref{Bamlerheatupb}) with the integral lower bound of Lemma \ref{hklb}. The argument is similar to that of Lemma 3.1 in \cite{Hall23} (see also \cite[Theorem 1.4]{BamZh17}, \cite[Theorem 1.2]{Bam18}). The only ingredient beyond Bamler's estimates is the integral lower bound (\ref{hklb1}); namely, the fact that the conjugate heat kernel measure has positive mass near the point $x$ itself at the earlier time $s$.

\begin{proposition}\label{Gauss-heat-kernel-ub-lb}
Under the assumption of {\rm Theorem \ref{localdistortion}}, there are constants $C_1,~C_2$ that only depend on $n,~\omega_0$ such that for all $0\leq s<t<T$ satisfying \eqref{scalerange}
\begin{align}\label{Gauss-HK-ub2}
H(x,t;y,s)&\leq\frac{C_1}{(t-s)^n}\exp\big(-\frac{d_s^2(x,y)}{C_2(t-s)}\big),\\ \label{Gauss-HK-lb2}
H(x,t;y,s)&\geq\frac{1}{C_1(t-s)^n}\exp\big(-\frac{C_2d_s^2(x,y)}{t-s}\big),\\ \label{gradientup3}
\frac{|\nabla_xH(x,t;y,s)|_{\omega(t)}}{H(x,t;y,s)}&\leq\frac{C_2}{(t-s)^{1/2}}\Big(1+\frac{d_s(x,y)}{(t-s)^{1/2}}\Big).
\end{align}
If in addition the type-I scalar curvature bound \eqref{typeIscalar} holds, then \eqref{Gauss-HK-ub2}--\eqref{gradientup3} hold for all $0\leq s<t<T,$ with constants $C_1,C_2$ which depend on $C_0$ in {\rm Theorem \ref{hke2}}.
\end{proposition}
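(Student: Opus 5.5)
The plan is to reduce, via Remark \ref{bddgeom}, to the range where $t-s$ is small compared with $\min\{s,T-s\}$; in particular $2\Lambda^2(t-s)\leq s$. Then each of the three estimates is derived from Lemma \ref{Bamlerheatupb}, combined with a lower bound on the pointed Nash entropy and a measure-concentration argument.

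\textbf{Step 1 (Nash entropy lower bound).} By Bamler, $\mathcal N_{x,t}(\tau)\geq\mu[g_{t-\tau},\tau]$. By Theorem \ref{ZhZh22}, $\mu[g_{t-\tau},\tau]\geq -C$ for $\tau\leq1$; by monotonicity it is bounded below by $\nu[g_0,2T]$ in general. Hence $\mathcal N_{x,t}(t-s)\geq-C$. Plugging this into \eqref{heatuppernash}, with $R_{min}=-R_0$ from Lemma \ref{scalarlbunif}, recovers the on-diagonal bound $H\leq C(t-s)^{-n}$.

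\textbf{Step 2 (Gaussian upper bound).} Apply \eqref{heatuppernash2} and integrate the variable $y'$ over the ball $B(x,s,\Lambda\sqrt{t-s})$ against $dg_s$. By Lemma \ref{hklb}, this ball carries conjugate heat kernel mass at least $\kappa_0$. For $y'$ in the ball,
$$d_s(y,y')\geq \bigl(d_s(x,y)-\Lambda\sqrt{t-s}\bigr)_+, \qquad (a-b)_+^2\geq \tfrac12 a^2-b^2 .$$
This gives
$$\kappa_0\, H(x,t;y,s)\leq C(t-s)^{-n}\exp\Bigl(-\frac{d_s^2(x,y)}{18(t-s)}\Bigr),$$
which is \eqref{Gauss-HK-ub2}. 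Note that only the ball at time $s$ enters, so no distance distortion is needed here.

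\textbf{Step 3 (lower bound).} Use \eqref{heatke3}. The scalar integral along the worldline of $y$ is controlled by the type-II bound \eqref{type2-scalar}: for $\tau\in[s,t]$,
$$R(y,\tau)\leq \frac{C}{(T-t)^2}\leq \frac{Cc_1}{t-s}$$
(since $T-\tau\geq T-t$), so the integral is at most $C$. In the type-I case the same control follows for all $s<t$ from the computation after \eqref{heatke3}. This yields $H\geq c(t-s)^{-n}\exp\bigl(-Cd_t^2/(t-s)\bigr)$. Then replace $d_t$ by $d_s$ via Corollary \ref{distcomparison}; the additive $\alpha^{-1}$ is absorbed into the constant.

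\textbf{Step 4 (gradient bound).} Insert the bounds of Steps 1 and 3 into \eqref{graheatuppernash}. The logarithm is at most $C+C\,d_s^2/(t-s)$, and $\sqrt{C+Ca^2}\leq C(1+a)$, which gives \eqref{gradientup3}.

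\textbf{Type-I case.} Under \eqref{typeIscalar}, the remaining issue is removing the scale restriction. Lemma \ref{hklb} and Corollary \ref{distcomparison} hold for $t-s\lesssim T-t$. For the regime $t\geq 3T/4$ with larger $t-s$, I would invoke Lemma \ref{type1-hk-ublb} directly for the two-sided bound. For $s<3T/4\leq t$, I would use the semigroup identity at an intermediate time $b\in[5T/8,3T/4]$, combining bounded geometry on $[0,7T/8]$ (Remark \ref{bddgeom}) with Lemma \ref{type1-hk-ublb}. Here $t-s$ is bounded below, so by the diameter bound \eqref{diameterbd} the Gaussian factors are harmless constants.

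\textbf{Main obstacle.} I expect the main obstacle to be this gluing across regimes, in particular verifying that the lower bound via Steps 3 and 4 holds uniformly once $t-s$ is comparable to $T-t$. The gradient bound then follows as in Step 4, since Steps 1 and 3 give the needed inputs in every regime.
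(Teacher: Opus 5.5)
Your outline follows the paper's proof step by step: the Nash-entropy bound \eqref{nashentropy-2side}, Lemma \ref{hklb} with \eqref{heatuppernash2} for the upper bound, \eqref{heatke3} with the type-II bound and Corollary \ref{distcomparison} for the lower bound, \eqref{graheatuppernash} for the gradient, and Lemma \ref{type1-hk-ublb} plus a semigroup argument in the type-I case. However, Step 2 as written does not give the inequality you claim.

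\textbf{Step 2.} Integrate \eqref{heatuppernash2} in $y'$ over $B:=B(x,s,\Lambda\sqrt{t-s})$ against $dg_s$. The left side is at least $\kappa_0H(x,t;y,s)$, but the right side is
\[
\frac{C}{(t-s)^{2n}}\,e^{\Lambda^2/9}\exp\Big(-\frac{d_s^2(x,y)}{18(t-s)}\Big)\,|B|_s .
\]
To reach the power $(t-s)^{-n}$ you need the non-inflating bound $|B|_s\leq C(t-s)^{n}$, and Lemma \ref{hklb} does not provide it. The missing input is Lemma \ref{uppervb}, i.e. Bamler's volume bound from $\mathcal N_{x,s}\leq0$ and $R\geq-R_0$. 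The paper uses it in an equivalent form: it picks $z_1\in B$ with $H(x,t;z_1,s)\geq\kappa_0/|B|_s\geq c(t-s)^{-n}$ and applies \eqref{heatuppernash2} with $y'=z_1$.

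A simpler repair is already in your hands. Take $y'=x$ in \eqref{heatuppernash2} and use the on-diagonal lower bound $H(x,t;x,s)\geq c(t-s)^{-n}$ from \eqref{heatkelb2}. The scalar integral there runs along the worldline of $x$ and is controlled exactly as in your Step 3. This bypasses both Lemma \ref{hklb} and the volume bound. (Lemma \ref{hklb}, incidentally, does use distance distortion through Corollary \ref{distcomparison}.) Under \eqref{typeIscalar} this route gives \eqref{Gauss-HK-ub2} for all $0\leq s<t<T$ at once.

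\textbf{Type-I gluing.} This also needs fixing as stated. Lemma \ref{type1-hk-ublb} requires $s\geq 3T/4$, not just $t\geq 3T/4$. In the regime $s<3T/4\leq t$, an intermediate time $b\in[5T/8,3T/4]$ with $b>s$ satisfies $b-s\leq 3T/4-s$, which is small when $s$ is near $3T/4$. Then you have no uniform bounded-geometry lower bound for $H(z,b;y,s)$. You also cannot apply Lemma \ref{type1-hk-ublb} to $H(x,t;z,b)$ unless $b=3T/4$.

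The paper resolves this as follows, in the regime $t-s>\theta(T-t)$:
\begin{itemize}
\item For $t\leq 7T/8$ it uses Remark \ref{bddgeom}.
\item For $s\geq 3T/4$ it uses Lemma \ref{type1-hk-ublb}.
\item For $s<3T/4<7T/8<t$ it takes $b=7T/8$. Since $b-s\geq T/8$, bounded geometry gives $H(z,b;y,s)\geq c$, and then $\int_XH(x,t;z,b)\,dg_b(z)=1$ yields $H(x,t;y,s)\geq c$. Lemma \ref{type1-hk-ublb} is not needed in this regime.
\end{itemize}
There $t-s\geq T/8$, so the upper bound and the gradient bound follow, as you say, from the on-diagonal bound, \eqref{diameterbd}, and your Step 4.
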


\begin{proof}
The argument is divided into several steps. Let $m=2n$ and let $\Lambda,~c_0,~c_2$ be the constants of Lemma \ref{hklb}, Theorem \ref{localdistortion} and Corollary \ref{distcomparison}. We first treat the case (\ref{scalerange}). As in Step 3 of the proof of Theorem \ref{hke1} we may and do assume that $$2\Lambda^2(t-s)\leq s,$$ since otherwise $t\leq\frac{T}{2}$ and all the asserted inequalities follow from Remark \ref{bddgeom}. In the type-I case, Steps 1--5 below apply verbatim (with the type-I versions of Lemma \ref{hklb} and Corollary \ref{distcomparison}) as long as $$t-s\leq\theta(T-t)$$ and $$2\Lambda^2(t-s)\leq s,$$ where $$\theta:=\min\{c_2,\frac{1}{2\Lambda^2}\},$$ and the remaining cases are treated in Step 6.

{\bf Step 1. Uniform two sided bounds for the pointed Nash entropy.}
The pointed Nash entropy $\mathcal{N}_{x,t}(\tau)$ is nonincreasing in $\tau$ and $\mathcal{N}_{x,t}(0)=0$ (see \cite[Proposition 5.2]{Bam20}, \cite[Proposition 2.10]{HNab14}), so that $$\mathcal{N}_{x,t}(t-s)\leq0.$$ On the other hand, by \cite[Proposition 5.2]{Bam20} we have $$\frac{d}{d\tau}\big(\tau\mathcal{N}_{x,t}(\tau)\big)=\mathcal{W}_{x,t}(\tau)\geq\mu[g_{t-\tau},\tau],$$ and $$\tau\mapsto\mu[g_{t-\tau},\tau]$$ is nonincreasing by Perelman's monotonicity. Hence
\begin{align*}
\mathcal{N}_{x,t}(t-s)=\frac{1}{t-s}\int_0^{t-s}\mathcal{W}_{x,t}(\tau)d\tau\geq~&\frac{1}{t-s}\int_0^{t-s}\mu[g_{t-\tau},\tau]d\tau\\
\geq~&\mu[g_{s},t-s]\geq\mu[g_0,t]\geq\nu[g_0,T],
\end{align*}
and the right hand side is a finite constant depending only on $\omega_0$ (compare Step 1 of the proof of Theorem \ref{ZhZh22}). Hence there is a uniform constant $C_3$ with
\begin{equation}\label{nashentropy-2side}
-C_3\leq\mathcal{N}_{x,t}(t-s)\leq0
\end{equation}
for all $x\in X$ and $0\leq s<t<T.$ In particular (\ref{heatuppernash}) and Lemma \ref{scalarlbunif} give the on-diagonal upper bound
\begin{align}\label{heatkeup}
H(x,t;y,s)\leq\frac{C_1}{(t-s)^\frac{m}{2}}.
\end{align}

{\bf Step 2: A near diagonal lower bound.}
Define
$$\bar{r}:=\Lambda\sqrt{t-s}.$$
Then $$\bar{r}\leq\sqrt{s}.$$
Denote
$$B:=B(x,s,\bar{r}).$$
By Lemma \ref{hklb} (which applies since $\Lambda^2(t-s)\leq\frac{s}{2}\leq t$ and $\Lambda^2(t-s)\leq\Lambda^2c_1(T-t)^2\leq(T-t)^2$), we have
$$\int_{B}H(x,t;z,s)dg_s(z)\geq \kappa_0>0.$$
On the other hand, by Lemma \ref{uppervb} (that is, by $\mathcal{N}_{x,s}(\bar{r}^2)\leq0,$ Lemma \ref{scalarlbunif} and \cite[Theorem 8.1]{Bam20}), we have the volume upper bound
$$|B|_s\leq C_4\bar{r}^{m}=C_4\Lambda^{m}(t-s)^{n}.$$
Consequently there exists $z_1\in B$ with
\begin{equation}\label{neardiagonal-lb}
H(x,t;z_1,s)\geq\frac{\kappa_0}{|B|_s}\geq\frac{c_5}{(t-s)^{n}},~~~~d_s(x,z_1)\leq\Lambda\sqrt{t-s}.
\end{equation}

{\bf Step 3: Gaussian type upper bound.}
We apply (\ref{heatuppernash2}) with $y'\leftarrow z_1$ and use (\ref{nashentropy-2side}), (\ref{neardiagonal-lb}). This yields
$$H(x,t;y,s)\cdot\frac{c_5}{(t-s)^{n}}\leq H(x,t;y,s)H(x,t;z_1,s)\leq\frac{C_6}{(t-s)^{m}}\exp\Big(-\frac{d_s^2(y,z_1)}{9(t-s)}\Big).$$
Since $d_s(y,z_1)\geq d_s(x,y)-\Lambda\sqrt{t-s},$ we have
$$\frac{d_s^2(y,z_1)}{9(t-s)}\geq\frac{d_s^2(x,y)}{18(t-s)}-\frac{\Lambda^2}{9}.$$
Therefore, recalling $m=2n,$ we obtain
$$H(x,t;y,s)\leq\frac{C_1}{(t-s)^{n}}\exp\Big(-\frac{d_s^2(x,y)}{C_2(t-s)}\Big),$$
which is precisely (\ref{Gauss-HK-ub2}).

{\bf Step 4: Gaussian type lower bound.}
Assuming (\ref{scalerange}), we apply the type-II bound (\ref{type2-scalar}). Since $$T-\tau\geq T-t~~~\text{for}~~\tau\leq t$$ and $$\int_s^t\sqrt{t-\tau}d\tau=\frac{2}{3}(t-s)^{3/2},$$ we get
\begin{equation}\label{typeII-Lintegral}
\frac{1}{2\sqrt{t-s}}\int_s^t\sqrt{t-\tau}R(y,\tau)d\tau\leq\frac{C_{II}}{2\sqrt{t-s}(T-t)^2}\int_s^t\sqrt{t-\tau}d\tau=\frac{C_{II}(t-s)}{3(T-t)^2}\leq\frac{C_{II}c_1}{3}.
\end{equation}
Under the type-I assumption (\ref{typeIscalar}) we then have for all $0\leq s<t<T$,
\begin{equation}\label{typeI-Lintegral}
\frac{1}{2\sqrt{t-s}}\int_s^t\sqrt{t-\tau}R(y,\tau)d\tau\leq\frac{C_0}{2\sqrt{t-s}}\int_s^t\frac{d\tau}{\sqrt{t-\tau}}=C_0.
\end{equation}
In both cases Perelman's estimate (\ref{heatkelb2}) gives the on-diagonal lower bound $$H(y,t;y,s)\geq\frac{c}{(t-s)^{n}}.$$
Moreover, (\ref{heatke3}) yields the following off-diagonal lower bound. Exactly as in Lemma \ref{type1-hk-ublb} (see also \cite[Lemma 3.1]{Hall23}), we have
$$H(x,t;y,s)\ge\frac{c_3}{(t-s)^{n}}\exp\big(-2c_2\frac{d_t(x,y)^2}{t-s}\big).$$
Finally, Corollary \ref{distcomparison} applies. Indeed, in the general case,
$$t-s\leq c_1(T-t)^2\leq c_2(T-t)^2,$$
whereas in the type-I case,
$$t-s\leq \theta(T-t)\leq c_2(T-t).$$
It follows that
$$\frac{d_t^2(x,y)}{t-s}\leq\alpha^{-1}+\alpha^{-2}\frac{d^2_s(x,y)}{t-s}.$$
Hence (\ref{Gauss-HK-lb2}) follows. We stress that this step does not use Steps 1--3 nor Lemma \ref{hklb}.

{\bf Step 5: Gradient bound.}
By (\ref{graheatuppernash}), (\ref{nashentropy-2side}) and (\ref{Gauss-HK-lb2}) we obtain
$$\frac{|\nabla_xH|(x,t;y,s)}{H(x,t;y,s)}\leq\frac{C_7}{(t-s)^{\frac{1}{2}}}\sqrt{\log\Big(\frac{C_8(t-s)^{-n}}{H(x,t;y,s)}\Big)}\leq\frac{C_7}{(t-s)^{\frac{1}{2}}}\sqrt{C_9+\frac{C_9d_s^2(x,y)}{t-s}},$$
which gives (\ref{gradientup3}). This completes the proof of Proposition \ref{Gauss-heat-kernel-ub-lb} within (\ref{scalerange}), and, in the type-I case, under the conditions $$t-s\leq\theta(T-t)~~~ \text{and}~~~2\Lambda^2(t-s)\leq s.$$

{\bf Step 6: The remaining cases in the type-I case.}
Assume (\ref{typeIscalar}). If $$t-s\leq\theta(T-t)$$ but $$2\Lambda^2(t-s)>s,$$ then $$t<(1+2\Lambda^2)\theta(T-t).$$
Hence, after decreasing $\theta$ if necessary, we have $t\leq\frac{T}{2}$. All the assertions follow from Remark \ref{bddgeom}. From now on, we assume that
$$t-s>\theta(T-t).$$
If $t\leq\frac{7T}{8},$ then all the assertions follow again from Remark \ref{bddgeom}, with $T'=\frac{7T}{8}.$ If $s\geq\frac{3T}{4},$ then the two-sided bound consisting of (\ref{Gauss-HK-ub2}) and (\ref{Gauss-HK-lb2}) is precisely the content of Lemma \ref{type1-hk-ublb}, while (\ref{gradientup3}) follows from (\ref{Gauss-HK-lb2}) as in Step 5. It remains to consider the case
$$s<\frac{3T}{4}<\frac{7T}{8}<t.$$

The upper bound (\ref{Gauss-HK-ub2}) then follows from the on-diagonal bound (\ref{heatkeup}) and the diameter bound (\ref{diameterbd}), which shows that the Gaussian factor is bounded from below by $\exp(-\frac{8C_1^2}{C_2T}).$

For the lower bound we use the semigroup property of the heat kernel with the intermediate time $b:=\frac{7T}{8}.$ Since the flow is smooth on $[0,b],$ Remark \ref{bddgeom} together with the diameter bound yields
$$H(z,b;y,s)\geq c>0$$
for all $z,y\in X$ and $0\leq s\leq\frac{3T}{4}$ (note $b-s\geq\frac{T}{8}$). Therefore, using $$\int_XH(x,t;z,b)dg_b(z)=1,$$
we have
$$H(x,t;y,s)=\int_XH(x,t;z,b)H(z,b;y,s)dg_b(z)\geq c\int_XH(x,t;z,b)dg_b(z)=c.$$
Since $t-s\geq T/8$ and $\exp(-C_2d_s^2(x,y)/(t-s))\leq1$, we obtain
$$H(x,t;y,s)\geq c\geq\frac{1}{C_1(t-s)^n}\exp\Big(-\frac{C_2d_s^2(x,y)}{t-s}\Big)$$
after taking $C_1\geq c^{-1}(8/T)^n$. Moreover, (\ref{gradientup3}) follows once again from Step 5. This completes the proof of Proposition \ref{Gauss-heat-kernel-ub-lb}.
\end{proof}

\textbf{Proof~of~}Theorem \ref{hke2}: The two sided Gaussian bound (\ref{Gaussuplow2}) and the gradient ratio bound (\ref{gradientup2}) are the type-I case of Proposition \ref{Gauss-heat-kernel-ub-lb}. \qed

\begin{remark}\label{remark-alternative-hklb}
Assume in addition that the type--I bound \eqref{typeIscalar} holds. Then, for all \(0\le l<t<T\) with \(t-l\le l\), the integral lower bound of Lemma \ref{hklb} holds at radius \(\sqrt{t-l}\). Moreover, this bound follows from the Gaussian lower bound \eqref{Gauss-HK-lb2} together with Bamler's volume noncollapsing estimate \cite[Theorem 6.1]{Bam20}, without invoking the cut-off function of Theorem \ref{cutoffftn}. Indeed, let
$$r:=\sqrt{t-l}~~~ \text{and}~~~r':=r/\sqrt{C_0},$$
where we may assume $C_0\geq1.$ For $\tau\in[l-r'^2,l]$ we have
$$T-\tau\geq T-l\geq t-l.$$
Hence
$$R\leq\frac{C_0}{T-\tau}\leq\frac{C_0}{t-l}=r'^{-2}~~~~\text{on}~~X\times[l-r'^2,l],$$
which is the scalar curvature hypothesis of \cite[Theorem 6.1]{Bam20} at the point $(x,l)$ and at the scale $r'.$ Since the pointed Nash entropy is bounded from below by \eqref{nashentropy-2side}, that theorem gives
\begin{equation}\label{volume-noncollapse}
|B(x,l,\sqrt{t-l})|_{l}\geq|B(x,l,r')|_l\geq c\exp\big(\mathcal{N}_{x,l}(r'^2)\big)r'^{2n}\geq c'(t-l)^{n}.
\end{equation}
Combining \eqref{volume-noncollapse} with \eqref{Gauss-HK-lb2}, applied to the pair $(l,t)$ and to the points $z\in B(x,l,\sqrt{t-l}),$ for which $d_l(x,z)^2\leq t-l,$ we have
\begin{equation}\label{heat-kernel-integral-lb}
\int_{B(x,l,\sqrt{t-l})}H(x,t;z,l)dg_{l}(z)\ge c'(t-l)^n\cdot\frac{1}{C_1(t-l)^n}e^{-C_2}=:c''>0.
\end{equation}
We emphasize that the radius must be the varying quantity $\sqrt{t-l}$, and that the volume estimate is applied at each time $l$ separately. 
\end{remark}

\section{Distance Distortion Estimates II}\label{sec7}

In this section, we study the behavior of the distance function along the K\"ahler-Ricci flow under the assumption of Theorem \ref{localdistortion}. We will extend the distance distortion estimates of Section \ref{sec3} to a uniform statement on small time intervals following Bamler-Zhang \cite{BamZh19}. We also discuss what is needed in order to extend the distance function continuously up to the singular time.

\subsection{Lower bound on the distance distortion}
We will follow the argument of Bamler-Zhang \cite{BamZh19} to obtain the lower bound on the distance distortion. At first, we obtain the upper volume bound in the following form.

\begin{lemma}\label{uppervb}
Under the assumption of {\rm Theorem \ref{localdistortion}}, there exists a uniform constant $A$ such that for any $(x,t)\in X\times[0,T)$ and any $r>0$ we have
\begin{equation}\label{6.1}
|B(x,t,r)|_t\leq Ar^{2n}\exp(Ar).
\end{equation}
\end{lemma}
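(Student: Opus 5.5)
The bound will come from Bamler's upper volume bound \cite[Theorem 8.1]{Bam20}, combined with the two-sided Nash entropy control of Step 1 in the proof of Proposition \ref{Gauss-heat-kernel-ub-lb}. Since the statement is for all $r>0$ but $\exp(Ar)$ appears, the plan has two parts. For small radii we use Bamler's estimate at scale $r$. For large radii we use the diameter bound \eqref{diameterbd} and the volume bound $vol_{g(t)}(X)\leq C$ from the proof of Theorem \ref{ZhZh22}.

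\textbf{Bamler's volume bound.} In the form we need, \cite[Theorem 8.1]{Bam20} reads as follows. If $R\geq R_{min}$ on $X\times[t-r^2,t]$, then
$$|B(x,t,r)|_t\leq C(m,R_{min}r^2)\exp\big(\mathcal{N}_{x,t}(r^2)\big)r^m,$$
provided $r^2\leq t$, so that the flow exists on $[t-r^2,t]$. By Lemma \ref{scalarlbunif}, $R\geq-R_0$ everywhere. Step 1 of Proposition \ref{Gauss-heat-kernel-ub-lb} gives $\mathcal{N}_{x,t}(\tau)\leq0$ for every $\tau$; this part uses only $\mathcal{N}_{x,t}(0)=0$ and monotonicity. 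Hence for $r^2\leq t$ we get
$$|B(x,t,r)|_t\leq C(m,R_0r^2)\,r^{2n}.$$
The constant $C(m,R_0r^2)$ is bounded by $C(m,R_0\max\{r^2,1\})$. I would absorb this, for $r\leq 1$, into a uniform $A$.

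\textbf{Large radii.} If $r\geq r_1:=\min\{1,\sqrt{t}\}$ is not covered by the previous step, I split further.

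\emph{Case $t\geq T/2$.} Here $r_1\geq\min\{1,\sqrt{T/2}\}$ is a uniform constant. For $r\geq r_1$,
$$|B(x,t,r)|_t\leq vol_{g(t)}(X)\leq C\leq (C r_1^{-2n})\,r^{2n}.$$

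\emph{Case $t<T/2$.} The metrics $g(t)$ are uniformly equivalent to $g_0$ by Remark \ref{bddgeom}. The classical Bishop–Gromov bound on a flow with bounded geometry then gives $|B(x,t,r)|_t\leq Cr^{2n}e^{Cr}$ for all $r$. This is where the exponential factor is natural, although for compact $X$ even a polynomial bound suffices once $r$ exceeds the diameter.

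Finally, because \eqref{diameterbd} gives $B(x,t,r)=X$ once $r\geq C_1$, the only nontrivial range is $r\leq C_1$. Bamler's constant $C(m,R_0r^2)$ is uniform there. This already yields \eqref{6.1}, with $\exp(Ar)$ included for safety.

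\textbf{Main obstacle.} The delicate point is the requirement $r^2\leq t$ in Bamler's theorem when $t$ is small. As above, I would handle it by Remark \ref{bddgeom}, using bounded geometry on $[0,T/2]$. A second point to check is the precise dependence of Bamler's constant on $R_{min}r^2$. If only the form $C\exp(C R_{min}^- r^2)$ is available, then restricting to $r\leq C_1$ via the diameter bound makes it uniform, so no essential difficulty remains.
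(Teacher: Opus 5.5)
Your proof is correct. Its first half is the same as the paper's three-case analysis:
\begin{itemize}
\item when $r^2\le t$, Bamler's noninflating estimate \cite[Theorem 8.1]{Bam20} with $\mathcal{N}_{x,t}(r^2)\le 0$ and $R\ge -R_0$, where $R_0r^2\le R_0T$ keeps the constant uniform;
\item when $r^2>t$ and $t\le T/2$, Remark \ref{bddgeom};
\item when $r^2>t\ge T/2$, the bound on the total volume.
\end{itemize}
Your appeal to Step 1 of Proposition \ref{Gauss-heat-kernel-ub-lb} is not circular. You only use $\mathcal{N}_{x,t}\le 0$, which comes from $\mathcal{N}_{x,t}(0)=0$ and monotonicity, whereas Lemma \ref{uppervb} enters that proposition only in Step 2.

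The routes diverge for large radii. The paper records the outcome of its case analysis only for $r\le 1$, as \eqref{6.2}. It then runs the covering-and-induction argument of Bamler--Zhang \cite[Lemma 2.1]{BamZh19} to propagate the bound to $r\ge1$, at the cost of the factor $\exp(Ar)$. In that argument, bounding the number $N$ of disjoint $\frac18$-balls uses the lower volume bound \eqref{noncollapse2}.

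You instead use that $vol_{g(t)}(X)=\frac{2^n}{n!}\big([\omega_0]-2\pi tc_1(X)\big)^n$ is uniformly bounded. Hence $|B(x,t,r)|_t\le C\le Cr^{2n}$ for $r\ge1$, which gives the stronger bound $|B(x,t,r)|_t\le Ar^{2n}$ for all $r>0$ and makes $\exp(Ar)$ unnecessary. Your version is shorter and needs neither \eqref{noncollapse2} nor, in fact, the diameter bound \eqref{diameterbd} that you invoke, since the total-volume bound already covers $r\ge 1$.

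The packing argument is the robust tool when no uniform bound on total volume is available. In this compact finite-time setting it is redundant: the paper's own three cases already cover every $r>0$.
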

\begin{proof}
Recall first that the pointed Nash entropy satisfies $$\mathcal{N}_{x,t}(\tau)\leq\mathcal{N}_{x,t}(0)=0$$
(cf. \cite[Proposition 5.2]{Bam20}). Hence, by Lemma \ref{scalarlbunif} and Bamler's noninflating estimate in \cite[Theorem 8.1]{Bam20}, we have, whenever $r^2\leq t,$
\begin{align*}
|B(x,t,r)|_t\leq C(n,R_0T)\exp\big(\mathcal{N}_{x,t}(r^2)\big)r^{2n}\leq C_1r^{2n}.
\end{align*}
If $r^2>t$ and $t\leq\frac{T}{2},$ the same bound follows from Remark \ref{bddgeom}, since the metrics $\omega(t),~t\in[0,\frac{T}{2}],$ are uniformly equivalent to $\omega_0$. If $r^2>t$ and $t\geq\frac{T}{2},$ then $r^2\geq\frac{T}{2}$ and  $$|B(x,t,r)|_t\leq\frac{[\omega(t)^n]}{n!}\leq C\leq C(\frac{2}{T})^nr^{2n}.$$
From above, we have for any $x\in X$ and $0\leq r\leq1$
\begin{equation}\label{6.2}
|B(x,t,r)|_t\leq C_1r^{2n}.
\end{equation}

Next we adopt the argument of Lemma 2.1 in Bamler-Zhang \cite{BamZh19} to obtain our desired estimates (\ref{6.1}). Fix some $x\in X$ and let $N<\infty$ be the maximal with the property that we can find points $x_1,\cdots,x_N\in B(x,t,\frac{1}{2})$ such that the balls $B(x_1,t,\frac{1}{8}),\cdots,B(x_N,t,\frac{1}{8})$ are pairwise disjoint. Note that then
$$B(x_1,t,\frac{1}{8}),\cdots,B(x_N,t,\frac{1}{8})\subset B(x,t,1).$$
So, by (\ref{6.2}), we have $N\leq C_*:=(C_0(\frac{1}{8})^{\theta})^{-1}C_1.$ Moreover, by the maximality of $N$, we have
\begin{equation}\label{6.3}
B(x,t,\frac{1}{2})\subset B(x_1,t,\frac{1}{4})\cup\cdots\cup B(x_N,t,\frac{1}{4}).
\end{equation}

We now argue that for all $r\geq\frac{1}{2}$
\begin{equation}\label{6.4}
B(x,t,r+\frac{1}{4})\subset B(x_1,t,r)\cup\cdots\cup B(x_N,t,r).
\end{equation}

Let $y\in B(x,t,r+\frac{1}{4})$ and consider a $g(t)$-minimal geodesic $\gamma:[0,l]\to X$ connect $x$ and $y$ that is parameterized by arclength. Then $l<r+\frac{1}{4}.$ By (\ref{6.3}) we may choose $i\in\{1,\cdots,N\}$ such that $\gamma(\frac{1}{2})\in\overline{B(x_i,t,\frac{1}{4})}.$ Then by triangle inequality
$$d_t(x_i,y)\leq(l-\frac{1}{2})+d_t(\gamma(\frac{1}{2}),x_i)\leq l-\frac{1}{4}<r.$$
So $y\in B(x_i,t,r),$ which confirms (\ref{6.4}).

Let us now prove by induction on $k=1,2,\cdots$ that for any $x\in X$
\begin{equation}\label{6.5}
|B(x,t,\frac{1}{4}k)|_t<C_*^k.
\end{equation}
For $k=1$, the inequality follows from (\ref{6.2}). If the inequality is true for $k$, then we can use (\ref{6.4}) to conclude
\begin{align*}
|B(x,t,\frac{1}{4}(k+1))|_t&\leq|B(x_1,t,\frac{1}{4}k)|_t+\cdots+|B(x_N,t,\frac{1}{4}k)|_t\\
&\leq N\cdot C_*^k\leq C_*^{k+1}.
\end{align*}
So (\ref{6.5}) holds for $k+1$. This confirms (\ref{6.5}).

The assertion of the lemma now follow from (\ref{6.2}) for $r<1.$ For $r\geq1$ choose $k\in\mathbb{N}$ such that $\frac{1}{4}(k-1)\leq r\leq\frac{k}{4}.$ Then, by (\ref{6.5}), we have
$$|B(x,t,r)|_t<|B(x,t,\frac{k}{4})|_t\leq C_*^k=C_*\exp((k-1)\log C_*)\leq C_*\exp(4(\log C_*)r).$$
This proves (\ref{6.1}).
\end{proof}

\begin{lemma}\label{dis2lb}
Under the assumption of {\rm Theorem \ref{localdistortion}}, there exists a uniform constant $C$ such that for all $0\leq s<t<T$ satisfying \eqref{scalerange} and all $x,y\in X$ we have the distance bound
\begin{align}\label{7.6}
d_s(x,y)-C\sqrt{t-s}\leq d_t(x,y).
\end{align}
Under the type-I assumption \eqref{typeIscalar}, \eqref{7.6} holds for all $0\leq s<t<T,$ with $C$ depending on $C_0$ in {\rm Theorem \ref{hke2}}.
\end{lemma}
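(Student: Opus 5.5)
We follow the heat-kernel argument of Bamler--Zhang \cite[Lemma 2.2]{BamZh19}. The tools are the Gaussian upper bound of Theorem \ref{hke1} (or Theorem \ref{hke2} in the type-I case), the volume upper bound of Lemma \ref{uppervb}, and the fact that the heat kernel measure has total mass $\int_XH(x,t;z,s)\,dg_s(z)\approx1$. Since the conjugate heat kernel integrates to exactly one, we use $d\nu_{x,t;s}=H(x,t;\cdot,s)\,dg_s$.

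By Remark \ref{bddgeom} we may assume $t-s$ is small and $s$ is bounded away from $0$. Suppose $d_t(x,y)\le d_s(x,y)-C\sqrt{t-s}$ with $C$ large, and set $r:=\sqrt{t-s}$ and $D:=d_s(x,y)$. Take the $g_t$-minimal geodesic $\gamma$ from $x$ to $y$, of length $L=d_t(x,y)<D-Cr$. Choose points $p_0=x,p_1,\dots,p_N=y$ on $\gamma$ with $d_t(p_i,p_{i+1})\le r$, so that $N\lesssim L/r+1$.

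We apply Lemma \ref{hklb} at each $p_i$, in the form obtained from the cut-off function in its proof. It gives mass at least $\kappa_0$ of $\nu_{p_i,t;s}$ in $B(p_i,t,\Lambda_1 r)$. By Corollary \ref{distcomparison} this ball lies in $B(p_i,s,\Lambda r)$.

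The key comparison is then between the measures for consecutive points. For $d_t(p_i,p_{i+1})\le r$ we control the $W_1$-distance at time $s$ of $\nu_{p_i,t;s}$ and $\nu_{p_{i+1},t;s}$ by $d_t(p_i,p_{i+1})$; this is the monotonicity \cite[Lemma 2.7]{Bam20}. Chaining gives $W_1^{g_s}(\nu_{x,t;s},\nu_{y,t;s})\le d_t(x,y)$.

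It remains to bound this $W_1$-distance from below. The Gaussian upper bound of Theorem \ref{hke1}, together with Lemma \ref{uppervb}, gives
$$\int_{X\setminus B(x,s,Ar)}d_s(x,z)\,d\nu_{x,t;s}(z)\le \varepsilon(A)\,r .$$
Summing the volume bound over dyadic annuli makes this tail small for large $A$; the diameter bound \eqref{diameterbd} keeps everything finite. The same holds at $y$. Hence $\nu_{x,t;s}$ is concentrated within $O(r)$ of $x$ in $d_s$ in the $L^1$ sense, and likewise for $\nu_{y,t;s}$. By the triangle inequality for $W_1$ with respect to $\delta_x$ and $\delta_y$,
$$D\le W_1^{g_s}(\delta_x,\nu_{x,t;s})+W_1^{g_s}(\nu_{x,t;s},\nu_{y,t;s})+W_1^{g_s}(\nu_{y,t;s},\delta_y)\le d_t(x,y)+C' r .$$
This gives \eqref{7.6}. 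The estimate $W_1^{g_s}(\delta_x,\nu_{x,t;s})\le C'r$ is exactly the first-moment bound above.

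The main obstacle is this first-moment bound. It is where the Gaussian upper bound must be integrated against the volume growth of $d_s$-balls around $x$. Lemma \ref{uppervb} supplies $|B(x,s,\rho)|_s\le A\rho^{2n}e^{A\rho}$, and the diameter bound handles the exponential factor.

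In the type-I case the same argument works for all $s<t$, using Theorem \ref{hke2}. If the monotonicity of $W_1$ is unavailable in the needed form, we would fall back on the Bamler--Zhang route. That route tests the heat flow of a $1$-Lipschitz function $d_s(x,\cdot)$, truncated, and uses the gradient estimate \eqref{diffhar1}: $|\nabla u|\le1$ is preserved up to a factor by Bamler's gradient bound $|\nabla P u|\le |\nabla u|_\infty$ for the forward heat flow on Ricci flow.
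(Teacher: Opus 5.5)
Your argument is correct and is essentially the paper's proof in dual form. The paper evolves $u$ by the forward heat equation from $u(\cdot,s)=d_s(x,\cdot)$ and bounds $u(x,t)=\int_X d_s(x,\cdot)\,d\nu_{x,t;s}$ and $|d_s(x,y)-u(y,t)|$ by exactly your first-moment computation (Gaussian upper bound plus Lemma~\ref{uppervb}, summed over dyadic annuli); its final step, preservation of $|\nabla u|\le 1$, is the Kantorovich-dual content of the $W_1$-monotonicity \cite[Lemma 2.7]{Bam20} you invoke, and the paper itself uses your packaging later, in \eqref{8-Fd}.

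Several of your steps are superfluous: the chaining along the $g_t$-geodesic, the appeal to Lemma~\ref{hklb} at the points $p_i$, the contradiction setup, and the reduction via Remark~\ref{bddgeom}. Monotonicity applied directly to $\nu_{x,t;\cdot}$ and $\nu_{y,t;\cdot}$ (extended by $\delta_x,\delta_y$ at time $t$) already gives $d^{(s)}_{W_1}(\nu_{x,t;s},\nu_{y,t;s})\le d_t(x,y)$. Also, in your fallback, the Lipschitz bound is preserved exactly, not up to a factor, since the Ricci terms cancel in the Bochner formula for the forward heat equation on a Ricci flow.
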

\begin{proof}
Set $d:=d_s(x,y)$ and let $u\in C^0(X\times[s,t])\cap C^{\infty}(X\times(s,t])$ be a solution to the heat equation
$$\partial_tu=\Delta u,~~u(\cdot,s)=d_s(x,\cdot).$$
Then for any $(z,\tau)\in X\times[s,t]$
$$u(z,\tau)=\int_XH(z,\tau;\cdot,s)u(\cdot,s)dg(s)=\int_XH(z,\tau;\cdot,s)d_s(x,\cdot)dg(s).$$
Applying the Gaussian upper bound of Theorem \ref{hke1} (resp. of Theorem \ref{hke2} in the type-I case) and (\ref{6.1}), we find that
\begin{align*}
u(x,t)&\leq~\int_X\frac{C_1}{(t-s)^{\frac{m}{2}}}\exp\big(-\frac{d_s^2(x,z)}{C_2(t-s)}\big)d_s(x,z)dg(s)\\
&=~\sum_{k=-\infty}^{\infty}\int_{B(x,s,2^k)\setminus B(x,s,2^{k-1})}\frac{C_1}{(t-s)^{\frac{m}{2}}}\exp\big(-\frac{d_s^2(x,z)}{C_2(t-s)}\big)d_s(x,z)dg(s)\\
&\leq~\sum_{k=-\infty}^{\infty}|B(x,s,2^k)|_s\frac{C_1}{(t-s)^{\frac{m}{2}}}\exp\big(-\frac{2^{2k-2}}{C_2(t-s)}\big)\cdot2^k\\
&\leq~\sum_{k=-\infty}^{\infty}AC_1\frac{(2^k)^{m+1}}{(t-s)^{\frac{m}{2}}}\exp\big(-\frac{2^{2k}}{4C_2(t-s)}\big)
=~AC_1\sqrt{t-s}\sum_{k=-\infty}^{\infty}\rho_k^{m+1}e^{-\frac{\rho_k^2}{4C_2}},\\
&\leq~C_3\sqrt{t-s},
\end{align*}
where $\rho_k:=\frac{2^k}{\sqrt{t-s}}$ and $\rho_{k+1}=2\rho_k$. The last sum is a dyadic sum of the integrable function $$\rho\mapsto\rho^{m+1}e^{-\rho^2/4C_2},$$ which is bounded independently of $t-s.$

On the other hand, using $\int_XH(y,t;z,s)dg_s(z)=1,$ we have
\begin{align*}
|d-u(y,t)|&~=~\big|\int_XH(y,t;\cdot,s)(d-d_s(x,\cdot))dg(s)\big|\\
&~\leq~\int_XH(y,t;\cdot,s)|d_s(x,y)-d_s(x,\cdot)|dg(s)\\
&~\leq~\int_XH(y,t;\cdot,s)d_s(y,\cdot)dg(s).
\end{align*}
So similarly,
$$|d-u(y,t)|\leq~ C_4\sqrt{t-s}.$$
It follows that
\begin{equation}\label{6.6}
|u(x,t)-u(y,t)|\geq ~d-(C_3+C_4)\sqrt{t-s}\geq~ d-C_5\sqrt{t-s}.
\end{equation}
By \begin{equation}\label{4.5}
\Delta|\nabla u|^2-\partial_t|\nabla u|^2=|\nabla\nabla u|^2+|\nabla\bar{\nabla}u|^2\geq0,
\end{equation}
we have
\begin{equation}\label{4.6}
\partial_t|\nabla u|\leq\Delta|\nabla u|.
\end{equation}
The initial distance function is $1$-Lipschitz. We may approximate it uniformly by smooth functions with gradient bounded by $1+\varepsilon$, apply the maximum principle to their heat evolutions, and let $\varepsilon\to0$. Consequently $|\nabla u|\leq1$ on $X\times(s,t]$ (see also \cite[Lemma 2.5]{Bam20}). So
$$|u(x,t)-u(y,t)|\leq ~d_t(x,y).$$
Together with (\ref{6.6}) this gives us
$$d_s(x,y)-C_5\sqrt{t-s}\leq ~d_t(x,y).$$
This completes the proof.
\end{proof}

\subsection{Upper bound on the distance distortion}

In this subsection, we consider the converse inequality $$d_t\leq d_s+C\sqrt{t-s}.$$
The argument of Lemma \ref{dis2lb} does not apply directly, since the evolution equation (\ref{4.6}) for $|\nabla u|$ acquires an extra Ricci curvature term when it is applied to the backward heat equation. Hallgren-Jian-Song-Tian \cite[Lemma 3.9, Proposition 3.10]{HJST23} overcome this difficulty on the type-I rescaled flow by an integral gradient estimate. Its weight combines the forward heat kernel with the Ricci potential $v,$ using the Li-Yau inequality of Theorem \ref{LiYauthm} and the volume bound (\ref{6.1}). In the unrescaled flow, however, the situation changes. Both the Li--Yau inequality (\ref{liyau-ineq}) and the estimate
$$|\nabla v|^2\leq\frac{Cv}{1-t}$$
blow up as $t\to1$ (see Remark \ref{liyauremark}). As a result, this argument gives only constants that depend on the initial time $t_0$, and these constants need not remain bounded as $t_0\to1$. Since the constants are allowed to degenerate anyway, we give here short proofs of the resulting local statements, which rely only on the smoothness of the flow before the singular time. We keep the formulation of \cite{HJST23}, since it is precisely this form of the estimate that would be needed, with uniform constants, to obtain the upper inequality \eqref{impdde-upper} up to the singular time; see Remark \ref{remark-holder}.

\begin{lemma}\label{graheatub}
Under the assumption of Theorem {\rm \ref{localdistortion}} and with $T=1,$ for any $t_0\in[0,1)$ there exists $C=C(t_0)<\infty,$ such that the following statement holds.

Let $t_0<t_1<1$ with $1-t_0\leq2(1-t_1),$ and let $u\in C^{\infty}(X\times[t_0,t_1])$ be a solution of the backward heat equation $(\frac{\partial}{\partial t}+\Delta)u=0,$ with $|\nabla u(\cdot,t_1)|\leq1.$ Then
\begin{equation}\label{graheatub1}
|\nabla u|^2(y,t_0)\leq\exp\big(C\sqrt{t_1-t_0}\big)
\end{equation}
for all $y\in X.$ In particular
$$|\nabla u|^2(y,t_0)\leq\exp\big(C\sqrt{t_1-t_0}\cdot(D^2+1)\big)$$
for every $D\geq0$.
\end{lemma}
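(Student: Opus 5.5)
The plan is to use only the smoothness of the flow on the compact interval $[0,\frac{1+t_0}{2}]$, as the paragraph preceding the lemma announces. Since $1-t_0\leq2(1-t_1)$, we have $t_1\leq t_*:=\frac{1+t_0}{2}<1$. Hence the whole time interval $[t_0,t_1]$ lies in $[0,t_*]$. By Remark \ref{bddgeom} the flow has bounded geometry there, so
$$K:=\sup_{X\times[0,t_*]}|\Ric|<\infty,\qquad K=K(t_0).$$

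Next compute the evolution of $|\nabla u|^2$ for a backward heat solution, $\partial_tu=-\Delta u$. With $\partial_tg=-2\Ric$, the inverse metric evolves by $\partial_tg^{ij}=2R^{ij}$, and Bochner gives
$$\Delta|\nabla u|^2=2|\nabla^2u|^2+2\langle\nabla u,\nabla\Delta u\rangle+2\Ric(\nabla u,\nabla u).$$
Combining, we get
$$\Big(\frac{\partial}{\partial t}+\Delta\Big)|\nabla u|^2=2|\nabla^2u|^2+4\Ric(\nabla u,\nabla u)\geq-4K|\nabla u|^2 .$$
(In the Kähler normalization the Hessian term splits into $|\nabla\nabla u|^2+|\nabla\bar\nabla u|^2$, but only its nonnegativity matters.) Thus $F:=e^{-4K(t_1-t)}|\nabla u|^2$ satisfies $(\partial_t+\Delta)F\geq0$. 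Equivalently, in the reversed time $\sigma=t_1-t$ it is a subsolution of the forward heat inequality. The maximum principle on the closed manifold $X$, run backwards from $t_1$, then gives
$$\sup_XF(\cdot,t)\leq\sup_XF(\cdot,t_1)\leq1 .$$
Hence
$$|\nabla u|^2(y,t_0)\leq e^{4K(t_1-t_0)} .$$

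Finally, to reach the stated form, use $t_1-t_0\leq1$. This gives $t_1-t_0\leq\sqrt{t_1-t_0}$, so the bound \eqref{graheatub1} holds with $C:=4K(t_0)$. The second assertion is immediate, since $D^2+1\geq1$ and the exponent is nonnegative.

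The only point needing care is the sign bookkeeping in the evolution of $|\nabla u|^2$ under the backward equation coupled with the Ricci flow. The Ricci term from $\partial_tg^{ij}$ and the one from Bochner add rather than cancel. This is exactly why no uniform constant is available, and why the constant must depend on $t_0$ through $\sup|\Ric|$ on $[0,t_*]$.
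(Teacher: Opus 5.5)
Your proof is correct and follows essentially the same route as the paper: smoothness on $[t_0,\frac{1+t_0}{2}]$ gives a Ricci lower bound $K(t_0)$. The Bochner formula along the time-reversed flow then gives $|\nabla u|^2$ a growth term linear in $K$, and the maximum principle together with $t_1-t_0\leq\sqrt{t_1-t_0}$ finishes the argument. The only differences are cosmetic: you use an exponential weight where the paper reverses time explicitly, and you get $4K$ instead of the paper's $2K$ because you work in the Riemannian normalization rather than the K\"ahler one.
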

\begin{proof}
By assumption $t_1\leq\frac{1+t_0}{2}<1,$ so that the flow is smooth on the compact time interval $[t_0,\frac{1+t_0}{2}],$ and there is a constant $K=K(t_0)<\infty$ with $$Ric(\omega(t))\geq-K\omega(t)~~~\forall ~t\in[t_0,\frac{1+t_0}{2}].$$
Set $$U(x,\sigma):=u(x,t_1-\sigma)~~~ \text{and} ~~~\omega_{\sigma}:=\omega(t_1-\sigma)$$ for $\sigma\in[0,t_1-t_0].$ Then
$$\partial_{\sigma}U=\Delta_{\omega_{\sigma}}U~~~ \text{and} ~~~\partial_{\sigma}\omega_{\sigma}=Ric(\omega_{\sigma}),$$
and the Bochner formula gives, with respect to $\omega_{\sigma},$
$$(\partial_{\sigma}-\Delta)|\nabla U|^2=-|\nabla\nabla U|^2-|\nabla\bar{\nabla}U|^2-2Ric(\nabla U,\bar{\nabla}U)\leq2K|\nabla U|^2.$$
Since $|\nabla U|^2(\cdot,0)\leq1,$ the maximum principle gives $|\nabla U|^2(\cdot,\sigma)\leq e^{2K\sigma},$ that is
$$|\nabla u|^2(\cdot,t_0)\leq e^{2K(t_1-t_0)}\leq e^{2K\sqrt{t_1-t_0}},$$
since $t_1-t_0\leq1.$ This completes the proof.
\end{proof}

\begin{lemma}\label{dis2ub}
Under the assumption of {\rm Theorem \ref{localdistortion}}, for every $s\in[0,T)$ there exist constants $\theta=\theta(s)>0$ and $C=C(s)<\infty$ such that for all $t\in(s,\min\{s+\theta,T\})$ and all $x,y\in X$ we have the distance bound
\begin{align}\label{7.11}
d_s(x,y)+C\sqrt{t-s}\geq d_t(x,y).
\end{align}
\end{lemma}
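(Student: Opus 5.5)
We prove \eqref{7.11} by the dual of the heat-equation argument in Lemma \ref{dis2lb}. This time we use the backward heat equation, whose gradient is controlled by Lemma \ref{graheatub}. Normalize $T=1$ and fix $s$. Choose $\theta(s)>0$ so small that $s+\theta\leq\frac{1+s}{2}$, which keeps the flow smooth with bounded geometry on $[s,\frac{1+s}{2}]$ by Remark \ref{bddgeom}. Let $t\in(s,s+\theta)$ and set $d:=d_t(x,y)$. Let $u$ solve $(\partial_\tau+\Delta)u=0$ on $X\times[s,t]$ with final data $u(\cdot,t)=d_t(x,\cdot)$; this data is $1$-Lipschitz, and we mollify it as in Lemma \ref{dis2lb}. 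The solution has the conjugate representation
$$u(z,\tau)=\int_XH(\cdot,t;z,\tau)\,d_t(x,\cdot)\,dg_t,$$
which is the adjoint of the forward kernel.

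\textbf{Step 1 (gradient).} Lemma \ref{graheatub}, applied with $t_0=s$ and $t_1=t$, gives $|\nabla u|(\cdot,s)\leq e^{C\sqrt{t-s}/2}\leq 1+C'\sqrt{t-s}$. Integrating along a $d_s$-geodesic from $x$ to $y$ then gives
$$|u(x,s)-u(y,s)|\leq(1+C'\sqrt{t-s})\,d_s(x,y)\leq d_s(x,y)+C'\diam\sqrt{t-s}.$$
Here we use the diameter bound \eqref{diameterbd}.

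\textbf{Step 2 (values).} We show $u(x,s)\leq C\sqrt{t-s}$ and $|u(y,s)-d|\leq C\sqrt{t-s}$. As in Lemma \ref{dis2lb}, we have $|u(y,s)-d|\leq\int_XH(z,t;y,s)\,d_t(y,z)\,dg_t(z)$, and the analogous bound holds for $u(x,s)$. So we need a first-moment bound of the forward kernel measure $H(\cdot,t;y,s)\,dg_t$ around $y$ in the metric $d_t$. On the smooth interval $[s,\frac{1+s}{2}]$ the metrics are uniformly equivalent, so $d_t\leq C(s)\,d_s$. The forward kernel also has total mass at most $e^{R_0(t-s)}$ and Gaussian upper bounds in the $d_s$-distance with constants depending on $s$, by Remark \ref{bddgeom}. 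Combining these, the first moment is bounded by $C(s)\sqrt{t-s}$ via the same dyadic summation as in Lemma \ref{dis2lb}, now with the volume bound of the metric $g_t$ comparable to that of $g_s$.

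Combining the two steps,
$$d=u(y,s)+(d-u(y,s))\leq u(x,s)+d_s(x,y)+C\sqrt{t-s}\leq d_s(x,y)+C(s)\sqrt{t-s}.$$

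The main obstacle is Step 2. The moment integral is against $dg_t$ in the first slot, not $dg_s$ in the second, and Theorem \ref{hke1} bounds $H$ in $d_s$ as a function of the later point. To handle this, we use uniform equivalence of $g_t$ and $g_s$ on the smooth interval and the standard symmetric two-sided Gaussian bounds for bounded-geometry flows from \cite{CCGGIIKLLN10}. An alternative is the simpler observation that $d_t\leq e^{K(t-s)}d_s$ when $\Ric\geq-K$, which already gives the statement directly, since $e^{K(t-s)}d_s\leq d_s+C\sqrt{t-s}$ for $t-s\leq1$ by the diameter bound. I would present this direct argument as a fallback.
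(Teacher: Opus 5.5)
Your fallback is exactly the paper's proof, and it is complete on its own. On $[s,\frac{T+s}{2}]$ the flow is smooth, so $\Ric\geq-K_s$ there. Hence $\omega(t)\leq e^{K_s(t-s)}\omega(s)$ and $d_t\leq e^{K_s(t-s)/2}d_s$, so $d_t-d_s\leq(e^{K_s(t-s)/2}-1)\diam(X,\omega(s))\leq C(s)(t-s)\leq C(s)\sqrt{t-s}$. You should present this as the proof, not as a fallback.

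The heat-equation route you put first contains a genuine error. By \eqref{cheatkernel}, the function $u(z,\tau)=\int_XH(w,t;z,\tau)\,d_t(x,w)\,dg_t(w)$ solves the conjugate equation $\partial_\tau u=-\Delta u+Ru$, not $(\partial_\tau+\Delta)u=0$. So Step 1 and Step 2 are about two different functions. Step 1 relies on Lemma \ref{graheatub}, whose Bochner computation is for the backward heat equation. Step 2 relies on the kernel representation, which describes the conjugate solution.

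There is a related slip. $\int_XH(z,t;y,s)\,dg_t(z)$ is not equal to $1$, so the identity behind $|u(y,s)-d|\leq\int_XH(z,t;y,s)\,d_t(y,z)\,dg_t(z)$ is not exact. The error is only $O_s(t-s)$, but it confirms that the wrong kernel is being used.

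To repair the route, represent the backward solution with the kernel of the time-reversed equation $\partial_\sigma U=\Delta_{g(t-\sigma)}U$. That kernel has mass one against $dg_t$. You would then need its first-moment bound on the smooth interval, taken from bounded-geometry theory. This works with $s$-dependent constants but gains nothing. Lemma \ref{graheatub} is itself proved from the same bound $\Ric\geq-K(s)$, which already gives the metric comparison directly. In either approach the constant degenerates as $s\to T$, which is why the paper takes the direct route here.
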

\begin{proof}
Fix $s\in[0,T)$ and let $$\theta=\theta(s):=\frac{T-s}{2}.$$
Then the flow is smooth on $[s,s+\theta]=[s,\frac{T+s}{2}]$. Let $K_s<\infty$ be such that $$Ric(\omega(t))\geq-K_s\omega(t)~~~\forall ~t\in[s,s+\theta].$$ Since $$\partial_t\omega=-Ric(\omega)\leq K_s\omega,$$ we have
$$\omega(t)\leq e^{K_s(t-s)}\omega(s),$$
and hence $$d_t(x,y)\leq e^{K_s(t-s)/2}d_s(x,y)~~~\forall ~t\in[s,s+\theta].$$ Together with the diameter bound (\ref{diameterbd}) and the elementary inequality $e^{a}-1\leq ae^{a}$ this gives
$$d_t(x,y)-d_s(x,y)\leq\big(e^{K_s(t-s)/2}-1\big)diam(X,\omega(s))\leq C_1K_se^{K_s\theta}(t-s)\leq C(s)\sqrt{t-s},$$
where we used $t-s\leq\theta\leq T.$ This completes the proof.
\end{proof}

\begin{remark}\label{remark-holder}
Combining Lemma \ref{dis2lb} with Lemma \ref{dis2ub}, we obtain \eqref{impdde} on every time interval $[s,t]$ with $t-s\le\theta(s)$ for which \eqref{scalerange} holds. Under
\eqref{typeIscalar}, the same conclusion holds on every interval with $t-s\le\theta(s)$. So, the distance function is locally $\frac{1}{2}$-H\"older continuous in time. The constant in the lower bound (\ref{7.6}) is uniform, whereas $C(s)$ is not asserted to remain bounded as $s\to T.$

Under (\ref{typeIscalar}), the inequality (\ref{7.6}) holds for all $s<t.$
For fixed $x,y\in X$ the bounded function $t\mapsto d_t(x,y)$ satisfies
$$\liminf_{t\to T}d_t(x,y)\geq d_{s_j}(x,y)-C\sqrt{T-s_j}$$
for every sequence $s_j\nearrow T.$ Choosing $s_j$ so that
$$d_{s_j}(x,y)\to\limsup_{s\to T}d_s(x,y),$$
we see that the limit $$d_T(x,y):=\lim_{t\to T}d_t(x,y)$$
exists for every pair $(x,y),$ which defines a pseudo-distance on $X.$ A uniform upper bound $$d_t\leq d_s+C\sqrt{t-s}\qquad (0\le s<t<T,\ x,y\in X),$$
with $C$ independent of $s,t,x,y$, would imply the uniform convergence $d_t\to d_T$ on $X\times X$, as well as the continuity of $d_T$ in the topology of $X$. Such a uniform upper bound would therefore give the Gromov--Hausdorff description of the limit as in \cite{Bam18,BamZh19}. The argument of \cite{HJST23} would require a version of Lemma \ref{graheatub} in which the Ricci potential $v$ is replaced by a barrier which stays bounded along the flow, in the sense of \cite[Definition 4.1]{HJST23}. In Section \ref{sec8.3} we obtain uniform convergence by a different argument, using the singular-time conjugate heat kernels. \qed
\end{remark}


\section{The uniqueness of Gromov-Hausdorff limit and Type I blow-up limits}\label{sec8}

In this section, we present the consequences of the recent Type I scalar curvature estimate of Jian--Song \cite{JS26} for the results of the previous sections. Throughout this section $m=2n$, the constants $C_1,C_2$ are those of Theorem \ref{hke2}, and $(X,\omega(t))$ satisfies the assumption of Theorem \ref{localdistortion}, i.e. $\int_X\beta^n>0.$ In particular the diameter bound (\ref{diameterbd}) and the volume lower bound (\ref{noncollapse2}) are available. We write $X^{\circ}:=X\setminus\mathrm{Null}(\alpha)$, where
$$\alpha=[\beta]=[\omega_0]-2\pi Tc_1(X)$$
and $\mathrm{Null}(\alpha)$ is the null locus of $\alpha$ \cite{CT15}.

\subsection{The type-I scalar curvature bound of Jian-Song}\label{sec8.1}
\begin{theorem}{\rm(Theorem 1.1 in \cite{JS26})}\label{JSthm}
Let $(X,\omega(t))$ be a maximal solution of \eqref{KRF} on a compact K\"ahler manifold with $T<\infty.$ Then there is a constant $C_0=C_0(n,\omega_0)$ such that \eqref{typeIscalar} holds, that is
$$|R(x,t)|\leq\frac{C_0}{T-t}~~~~\text{on}~~X\times[0,T).$$
\end{theorem}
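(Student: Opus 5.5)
The lower bound is already contained in Lemma \ref{scalarlbunif}. Since $R\geq -R_0$ on $X\times[0,T)$ and $T-t\leq T$, we get $R\geq -R_0T/(T-t)$. The whole content of the statement is therefore the upper bound $R\leq C_0/(T-t)$. In the paper this is imported from \cite{JS26}; I would attempt it as a Perelman-type estimate for the type-I rescaled flow, built around the Ricci potential.

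Fix a smooth closed representative $\beta$ of $\alpha=[\omega_0]-2\pi Tc_1(X)$. Write the flow as a parabolic Monge--Amp\`ere equation with reference forms $\omega_t=\frac{T-t}{T}\omega_0+\frac tT\beta$, as in \eqref{PCMA}, with $\beta$ in place of $\pi^*\omega_Y$. Define $u_0$ as there, so that
$$\Ric(\omega(t))=\frac{\omega(t)-\beta}{T-t}-\sqrt{-1}\partial\bar\partial u_0,\qquad R=\frac{n-\tr_{\omega(t)}\beta}{T-t}-\Delta u_0 .$$
It then suffices to prove two bounds:
$$\tr_{\omega(t)}\beta\geq -C,\qquad -\Delta u_0\leq \frac{C}{T-t}.$$
I would pass to the rescaled flow $\hat\omega(\hat t)=\omega(t)/(T-t)$ with $\hat t=-\log(T-t)$. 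There the target is the time-independent bounds $\hat R\leq C$ and $|\hat\Delta u_0|\leq C$, and the rescaled potential $\hat u:=(T-t)u_0$ satisfies an equation of the form
$$(\partial_{\hat t}-\hat\Delta)\hat u=\hat u+n-\tr_{\hat\omega}\hat\beta-\text{const},$$
exactly the structure of Perelman's estimate on Fano K\"ahler--Ricci flow.

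The steps, in order, are as follows.

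\begin{enumerate}
\item Prove a two-sided $C^0$ bound $|\hat u|\leq C$. The upper bound comes from the maximum principle applied to $\partial_t\varphi\leq C$. The lower bound comes from the monotonicity of $(T-t)\inf u_0$ used for \eqref{u0-2side}; this requires $\beta\geq 0$ up to a $\sqrt{-1}\partial\bar\partial$-correction.
\item Control $\tr_{\omega}\beta$ from below by a Schwarz-lemma type computation.
\item Run Perelman's maximum-principle argument on the quantities $|\hat\nabla\hat u|^2/(\hat u+C)$ and $-\hat\Delta\hat u/(\hat u+C)$. As in \eqref{LiYau1} and \cite{JST23}, this yields $|\hat\Delta\hat u|\leq C$, hence $\hat R\leq C$, which unscales to \eqref{typeIscalar}.
\end{enumerate}

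Throughout, the inputs that replace Perelman's diameter bound are the uniform entropy bound (Theorem \ref{ZhZh22}), the diameter and noncollapsing bounds (Theorem \ref{GPSS22}), and Bamler's noninflating estimate (Lemma \ref{uppervb}).

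The main obstacle is that $\alpha$ is only nef and big, not semiample, so there is no holomorphic map $\pi$ and no parabolic Schwarz lemma. One then loses $\tr_\omega\beta\geq0$ and the bounded term $\tr_{\hat\omega}\hat\beta$ that \eqref{ricci-potential-equation1} exploited; note also that \eqref{u0-2side} itself was proved under \eqref{omega0-omegay}. A second problem is that Perelman's $C^0$ argument for $\hat u$ relies on a diameter bound for the rescaled metric, which here blows up like $(T-t)^{-1/2}$. To deal with both, I would first replace $\beta$ by a Kähler current with analytic singularities along $\mathrm{Null}(\alpha)$ (Collins--Tosatti \cite{CT15}), obtaining the estimates on $X^\circ$ with weights that degenerate along the null locus. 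I would then remove the weights by localizing the Perelman argument with the conjugate heat kernel. Concretely, I would integrate $-\hat\Delta\hat u$ against $d\nu_{x,t}$ and use the Nash-entropy bound \eqref{nashentropy-2side} together with Lemma \ref{hklb} to obtain a pointwise bound from an integral one. This localization step is where I expect the real difficulty, and it is presumably why the paper defers to \cite{JS26}.
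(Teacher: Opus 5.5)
The lower bound is handled correctly by Lemma \ref{scalarlbunif}; this is how Corollary \ref{unconditional} passes to $|R|\le\max\{C_0,R_0T\}/(T-t)$. The paper does not prove the upper bound at all, it quotes it from \cite{JS26}.

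Your sketch of the upper bound has a genuine gap, and it sits in Step 3, not only in the localization you flag. Take the most favourable case, the pullback setting \eqref{omega0-omegay} of Section \ref{sec3}, where $\beta\ge0$, the Schwarz lemma and \eqref{u0-2side} are all available. Even there, the Perelman--\cite{JST23} maximum-principle argument gives exactly \eqref{LiYau1}, and the paper shows in \eqref{type2-scalar} that this yields only $R\le C/(T-t)^2$.

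The reason is scaling. In the normalized flow $\partial_{\hat t}\hat\omega=\hat\omega-\Ric(\hat\omega)$ one has $\Ric(\hat\omega)=\hat\omega-e^{\hat t}\beta-\sqrt{-1}\partial\bar\partial u_0$ and $(\partial_{\hat t}-\hat\Delta)u_0=u_0+\tr_\omega\beta$. So the function with Perelman's structure is $u_0$ itself, which has size $(T-t)^{-1}$, not $O(1)$. Your bounded function $\hat u=(T-t)u_0$ instead satisfies $(\partial_{\hat t}-\hat\Delta)\hat u=(T-t)\tr_\omega\beta$, with no linear term, so your displayed equation is not correct. Moreover
\[
\hat R=n-\tr_\omega\beta-\hat\Delta u_0=n-\tr_\omega\beta-(T-t)^{-1}\hat\Delta\hat u ,
\]
so a bound $|\hat\Delta\hat u|\le C$ only gives $\hat R\le C/(T-t)$, which is type II again.

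In rescaled form, the estimate behind \eqref{LiYau1} reads $|\hat\Delta w|+|\hat\nabla w|^2\le Cw$ with $w=u_0-\inf_Xu_0+1$. This gives $R\le C/(T-t)$ only where $w=O(1)$, i.e.\ within bounded rescaled distance of a Ricci vertex; that is the local type-I bound of \cite{JST23}. In general $w$ is only $O((T-t)^{-1})$. Passing from the Ricci vertex to every point is the actual content of \cite{JS26}, and no step of your proposal does it.

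The obstacle you single out, and your remedy for it, are also off target. Since $\alpha-2\pi\varepsilon c_1(K_X)=[\omega(T-\varepsilon)]$ is K\"ahler for small $\varepsilon>0$, the nef class $\alpha$ satisfies the hypotheses of the base-point-free theorem. As the paper explains right after the statement, \cite{JS26} uses the transcendental version of Hacon--Xie \cite{HX26} to write $\alpha=[\Phi^*\theta_Y]$, with $\theta_Y$ K\"ahler on a normal compact K\"ahler space. This supplies the semipositive representative and Schwarz-lemma structure of Section \ref{sec3}, so the Collins--Tosatti weights are unnecessary. By the previous paragraph, though, that structure only returns you to the type-II bound.

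Your localization step is moreover circular. Lemma \ref{hklb} rests on Theorem \ref{cutoffftn} and Corollary \ref{distcomparison}, hence on \eqref{type2-scalar}, which in general the paper itself takes from \cite{JS26} or from the pullback structure. It also holds only for $\sqrt{t-s}\lesssim T-t$. The version at the type-I scale $\sqrt{t-s}\lesssim\sqrt{T-t}$, which a bound $R\le C_0/(T-t)$ would require, is derived in the paper from \eqref{typeIscalar} itself.

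Finally, Theorem \ref{GPSS22} needs the noncollapsing assumption $\int_X\beta^n>0$, whereas the statement covers every finite-time solution.
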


In the proof of Theorem \ref{JSthm}, the transcendental base-point-free theorem of Hacon--Xie \cite{HX26} is used in \cite[Theorem 2.1]{JS26} to represent the limiting class $\alpha$ by a smooth semipositive form $\theta=\Phi^*\theta_Y$, obtained by pulling back a K\"ahler form from a compact normal K\"ahler space $Y$. Under the assumption (\ref{omega0-omegay}) of Section \ref{sec3}, such a representative is already given by $\theta=\pi^*\omega_Y$, so this use of \cite{HX26} is unnecessary. The same independence holds when $X$ is projective and $[\omega_0]$ is rational. The rationality and base-point-free theorems give the required pullback representation over a normal projective variety, which may be singular. This latter case need not satisfy (\ref{omega0-omegay}) with a smooth target. In the general compact K\"ahler case, our use of Theorem \ref{JSthm} retains its dependence on \cite{HX26}. We shall use Theorem \ref{JSthm} as a black box.

\begin{corollary}\label{unconditional}
Under the assumption of {\rm Theorem \ref{localdistortion}}, the hypothesis \eqref{typeIscalar} is satisfied with $C_0=C_0(n,\omega_0).$ Consequently the type-I versions of {\rm Theorem \ref{localdistortion}} and {\rm Theorem \ref{cutoffftn}}, of {\rm Corollary \ref{distcomparison}}, of {\rm Lemma \ref{hklb}}, of {\rm Remark \ref{remark-alternative-hklb}} and of {\rm Lemma \ref{dis2lb}}, as well as {\rm Theorem \ref{hke2}}, hold unconditionally, with constants depending only on $n,\omega_0.$ In particular, for all $0\leq s<t<T$ and $x,y\in X,$
\begin{align}\label{8-gauss}
\frac{1}{C_1(t-s)^n}\exp\Big(-\frac{C_2d_s^2(x,y)}{t-s}\Big)\leq H(x,t;y,s)\leq\frac{C_1}{(t-s)^n}\exp\Big(-\frac{d_s^2(x,y)}{C_2(t-s)}\Big),
\end{align}
and
\begin{align}\label{8-dislow}
d_s(x,y)-C\sqrt{t-s}\leq d_t(x,y).
\end{align}
\end{corollary}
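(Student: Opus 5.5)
The corollary follows by feeding Theorem \ref{JSthm} into the conditional results already proved, so the plan is a bookkeeping argument. First I would check that the hypotheses of Theorem \ref{JSthm} hold under the assumption of Theorem \ref{localdistortion}. The flow \eqref{KRF} is a maximal solution with $T<\infty$ on a compact K\"ahler manifold, so Theorem \ref{JSthm} gives $|R|\le C_0/(T-t)$ on $X\times[0,T)$ with $C_0=C_0(n,\omega_0)$. In particular the one-sided bound \eqref{typeIscalar} holds. Since $T$ is determined by $\omega_0$, any constant depending on $n,\omega_0,C_0$ depends only on $n,\omega_0$.

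Next I would go through the type-I versions one by one. Their proofs (the type-I paragraphs in the proofs of Theorem \ref{localdistortion}, Theorem \ref{cutoffftn}, Corollary \ref{distcomparison}, Lemma \ref{hklb}, Remark \ref{remark-alternative-hklb}, Lemma \ref{dis2lb} and Proposition \ref{Gauss-heat-kernel-ub-lb}) use only the following inputs: the upper bound \eqref{typeIscalar}, the lower bound \eqref{scalarlb2}, the finiteness of $\nu[g_0,2T]$, and the diameter and volume bounds of Theorem \ref{GPSS22}. Lemma \ref{type1-hk-ublb}, used in Step 6 of Proposition \ref{Gauss-heat-kernel-ub-lb}, needs the two-sided bound $|R|\le C_0/(T-t)$. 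This is exactly what Theorem \ref{JSthm} provides; one may also combine \eqref{typeIscalar} with $R\ge -R_0$, enlarging $C_0$ by $R_0T$. With these inputs, each statement holds with constants depending only on $n,\omega_0$.

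Finally I would read off the two displayed inequalities. Estimate \eqref{8-gauss} is \eqref{Gaussuplow2} of Theorem \ref{hke2}, which is the type-I case of Proposition \ref{Gauss-heat-kernel-ub-lb}. Estimate \eqref{8-dislow} is the type-I case of Lemma \ref{dis2lb}, which follows from \eqref{8-gauss} together with Lemma \ref{uppervb}.

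The only point needing care is the lower scalar bound. Lemma \ref{type1-hk-ublb} is stated with $|R|$ and not merely $R$, so I would explicitly cite the absolute-value form in Theorem \ref{JSthm}, or Lemma \ref{scalarlbunif}, to remove any gap. Everything else is direct citation.
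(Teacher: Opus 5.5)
Your proposal is correct and matches the paper's proof. The paper likewise feeds Theorem \ref{JSthm} into the type-I statements of Sections \ref{sec3}--\ref{sec7}, and it also uses Lemma \ref{scalarlbunif} to get $|R|\leq\max\{C_0,R_0T\}/(T-t)$, which is the two-sided form needed for Lemma \ref{type1-hk-ublb} — the same point you flag.
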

\begin{proof}
Note that the lower bound $R\geq-R_0$ of Lemma \ref{scalarlbunif} gives $$|R|\leq\max\{C_0,R_0T\}/(T-t).$$
This is Theorem \ref{JSthm} combined with the type-I statements of Sections \ref{sec3}--\ref{sec7}.
\end{proof}

\subsection{Conjugate heat kernels based at the singular time}\label{sec8.2}

We use the following terminology of Bamler \cite{Bam20,Bam20b,Bam20c}. Let $(X,g_t)$ be a Ricci flow on a compact Riemannian manifold of dimension $m$ which develops a singularity at time $T<\infty$.

A {\it conjugate heat flow} on $X\times[0,T)$ is a family of probability measures
$$d\mu_s=u(\cdot,s)dg_s,~~~s\in[0,T),$$
where $u>0$ is a solution of the conjugate heat equation (\ref{cheateqa}). For $(x,t)\in X\times(0,T)$ the measures $$d\nu_{x,t;s}=H(x,t;\cdot,s)dg_s,~~~s\in[0,t),$$
form a conjugate heat flow (which we extend trivially, as $\delta_x,$ to $s=t$). For a probability measure $\mu$ on $(X,d_s)$ we write
$$\mathrm{Var}_s(\mu):=\int_X\int_Xd_s^2(y,z)\,d\mu(y)d\mu(z),$$
and $d^{(s)}_{W_1}$ for the $W_1$-Wasserstein distance of $(X,d_s).$
Define
$$H_m:=\frac{(m-1)\pi^2}{2}+4.$$
Recall two facts from \cite{Bam20}: for any two conjugate heat flows $\mu^1,\mu^2$ the function
\begin{equation}\label{8-W1mono}
s\longmapsto d^{(s)}_{W_1}(\mu^1_s,\mu^2_s)~~~\text{is nondecreasing on}~[0,T)
\end{equation}
\cite[Lemma 2.7]{Bam20}, and
\begin{equation}\label{8-varmono}
s\longmapsto\mathrm{Var}_s(\mu_s)+H_m s~~~\text{is nondecreasing on}~[0,T),
\end{equation}
\cite[Corollary 3.7]{Bam20}. Following \cite[Definition 2.32]{Bam20c}, a conjugate heat flow $\mu_s$ is said to be {\it based at the singular time} if
$$\lim_{s\nearrow T}\mathrm{Var}_s(\mu_s)=0.$$
By (\ref{8-varmono}), we have
$$\mathrm{Var}_s(\mu_s)\leq H_m(T-s).$$
The set of these flows is denoted by $\mathcal{M}_T,$ and it carries the distance
$$d_{\mathcal{M}_T}(\mu^1,\mu^2):=\lim_{s\nearrow T}d^{(s)}_{W_1}(\mu^1_s,\mu^2_s).$$
The function under the limit is nondecreasing by (\ref{8-W1mono}), and the limit is finite by (\ref{diameterbd}). Note that
$$d^{(s)}_{W_1}(\mu^1_s,\mu^2_s)\leq d_{\mathcal{M}_T}(\mu^1,\mu^2)~~~~\text{for~all}~s.$$
Thus, $d_{\mathcal{M}_T}(\mu^1,\mu^2)=0$ implies $\mu^1=\mu^2$. So, $d_{\mathcal{M}_T}$ is a distance. Lemma 2.35 in \cite{Bam20c} shows that $(\mathcal{M}_T,d_{\mathcal{M}_T})$ can be viewed as the singular time-$T$ slice of the flow $(X,g_t)$.

By \cite[Lemma 9.1]{FL25}, after translating the singular time $T$ to $0$, $(\mathcal{M}_T,d_{\mathcal{M}_T})$ is isometric to the time-zero slice $(Z_0,d_0^Z)$ of the spacetime completion (\cite[Theorem 2.7]{LZ26}).

The following lemma is the bridge between the heat kernel estimates of Theorem \ref{hke2} and the singular time. It shows that, under the type-I scalar bound, the conjugate heat kernel based at the singular time at a point $x$ is unique, and that it is localized around $x$ at every time $s\in[0,T)$, at the scale $\sqrt{T-s}.$


\begin{lemma}{\rm(Localization and uniqueness)}\label{localization}
Under the assumption of {\rm Theorem \ref{localdistortion}}, there are constants $C_3,C_4$ depending only on $n,\omega_0$ such that the following holds.
\begin{itemize}
\item[$(a)$] For every $x\in X$ the limit
$$u^{(x)}(y,s):=\lim_{t\nearrow T}H(x,t;y,s)$$
exists in $C^{\infty}_{loc}(X\times[0,T)),$ and
$$d\mu^{(x)}_s:=u^{(x)}(\cdot,s)dg_s$$
is a conjugate heat flow based at the singular time. More generally $$H(x_j,t_j;\cdot,\cdot)\longrightarrow u^{(x)}~~\text{in}~~C^{\infty}_{loc}(X\times[0,T))$$
whenever $x_j\to x$ in $X$ and $t_j\to T.$
\item[$(b)$] For all $(y,s)\in X\times[0,T),$
\begin{align}\label{8-gausssing}
\frac{1}{C_1(T-s)^n}\exp\Big(-\frac{C_2d_s^2(x,y)}{T-s}\Big)\leq u^{(x)}(y,s)\leq\frac{C_1}{(T-s)^n}\exp\Big(-\frac{d_s^2(x,y)}{C_2(T-s)}\Big).
\end{align}
\item[$(c)$] For all $s\in[0,T),$
\begin{align}\label{8-moment}
d^{(s)}_{W_1}\big(\mu^{(x)}_s,\delta_x\big)=\int_Xd_s(x,y)\,d\mu^{(x)}_s(y)\leq C_3\sqrt{T-s},~~~~\mathrm{Var}_s\big(\mu^{(x)}_s\big)\leq C_4(T-s).
\end{align}
\item[$(d)$] Every $\mu\in\mathcal{M}_T$ is of the form $\mu^{(y)}$ for some $y\in X.$ Moreover $\mu^{(x)}=\mu^{(y)}$ if and only if $d_{\mathcal{M}_T}(\mu^{(x)},\mu^{(y)})=0.$
\end{itemize}
\end{lemma}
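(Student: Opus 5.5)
The plan is to extract the limit by compactness, read off (b) and (c) from the uniform Gaussian bound \eqref{8-gauss}, and then obtain (d) from $W_1$-monotonicity together with localization.

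\textbf{Part (a): existence of the limit.} Fix $s_0<T$. For $t\in[(T+s_0)/2,T)$, the functions $H(x,t;\cdot,\cdot)$ solve the conjugate heat equation on $X\times[0,s_0]$, where the flow is smooth. By \eqref{8-gauss} they are bounded uniformly by $C_1(T-s_0)^{-n}2^n$. Parabolic regularity on the smooth compact region then gives uniform $C^k$ bounds. Hence for any sequences $t_j\to T$ and $x_j\to x$ there is a subsequence converging in $C^\infty_{loc}(X\times[0,T))$ to some positive solution $u$.

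To identify the limit, I would use the reproduction formula
$$H(x,t;y,s)=\int_X H(x,t;z,s')H(z,s';y,s)\,dg_{s'}(z)$$
with $s<s'<T$, which reduces the question to the measures $\nu_{x_j,t_j;s'}$. By the Gaussian upper bound together with the volume bound \eqref{6.1}, these measures have $W_1^{(s')}$-distance to $\delta_x$ at most $C\sqrt{T-s'}+d_{s'}(x_j,x)$. The $W_1$-monotonicity \eqref{8-W1mono} then gives, for two subsequential limits $u,u'$,
$$d_{W_1}^{(s)}(\mu_s,\mu'_s)\le d_{W_1}^{(s')}(\mu_{s'},\mu'_{s'})\le 2C_3\sqrt{T-s'}$$
for every $s'>s$. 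Letting $s'\to T$ gives $\mu=\mu'$, so the limit is unique and the whole family converges.

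\textbf{Parts (b) and (c).} Part (b) follows by passing to the limit in \eqref{8-gauss}, using continuity of $d_s$ and $t-s\to T-s$. For (c), I would integrate $d_s(x,y)$ and $d_s^2(x,y)$ against the upper bound in (b) using \eqref{6.1}, with the same dyadic sum as in Lemma \ref{dis2lb}. This gives $C_3\sqrt{T-s}$ and $C(T-s)$ respectively. The variance bound follows from $d_s^2(y,z)\le 2d_s^2(x,y)+2d_s^2(x,z)$. In particular $\mathrm{Var}_s\to0$, so $\mu^{(x)}\in\mathcal M_T$.

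\textbf{Part (d): main obstacle.} Take $\mu\in\mathcal M_T$. For each $s$ choose $y_s$ with
$$\int_X d_s(y_s,\cdot)\,d\mu_s\le \sqrt{\mathrm{Var}_s(\mu_s)}\le \sqrt{H_m(T-s)},$$
which is possible since the variance controls the average distance to some point. Then
$$d_{W_1}^{(s)}(\mu_s,\mu^{(y_s)}_s)\le d_{W_1}^{(s)}(\mu_s,\delta_{y_s})+d_{W_1}^{(s)}(\delta_{y_s},\mu^{(y_s)}_s)\le C\sqrt{T-s}.$$
By compactness of $X$, choose $s_j\to T$ with $y_{s_j}\to y$. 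By (a), $\mu^{(y_{s_j})}\to\mu^{(y)}$ locally smoothly. For fixed $s'<s_j$, monotonicity gives
$$d_{W_1}^{(s')}(\mu_{s'},\mu^{(y_{s_j})}_{s'})\le C\sqrt{T-s_j}.$$
Passing $j\to\infty$, using convergence of measures at the fixed smooth time $s'$, yields $\mu_{s'}=\mu^{(y)}_{s'}$.

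The delicate point is that convergence in (a) is only local in time, so the limit must be taken at a fixed time $s'$ before using $W_1$-monotonicity, never at $s_j$. The final equivalence in (d) is exactly the fact, noted above, that $d_{\mathcal M_T}$ is a distance.
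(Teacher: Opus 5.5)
Your proposal is correct and follows the paper's proof essentially step for step. For (a) you use parabolic compactness and then $W_1$-monotonicity against the localization bound. For (b)--(c) you use the dyadic moment computation with the volume bound of Lemma \ref{uppervb}. For (d) you take a near-center point of $\mu_s$ and apply monotonicity.

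The only real variation is in (d). You compare $\mu$ with the singular-time kernels $\mu^{(y_s)}$ via (c). You then need continuity of $y\mapsto\mu^{(y)}$, which follows from (a) by a short diagonal argument. The paper instead compares $\mu_s$ directly with the genuine kernels $\nu_{z_i,s_i;s}$, using the reproduction formula and convexity of $W_1$. That way (a) applies verbatim with $x_j=z_i$, $t_j=s_i$.
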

\begin{proof}
Let $x_j\to x$ and $t_j\to T.$ By (\ref{8-gauss}), the functions $H(x_j,t_j;\cdot,\cdot)$ are uniformly bounded on $X\times[0,T']$ for every $T'<T$ (for $j$ large) and satisfy the conjugate heat equation (\ref{cheateqa}), whose coefficients are smooth on $X\times[0,T'].$ By parabolic regularity, up to a subsequence, $H(x_j,t_j;\cdot,\cdot)$ converges in $C^{\infty}_{loc}(X\times[0,T))$ to a positive solution $u$ satisfying (\ref{cheateqa}) with $$\int_Xu(\cdot,s)dg_s=1~~~~ \text{for~all}~~ s.$$
Passing to the limit in (\ref{8-gauss}), with $t=t_j,$ and using $d_s(x_j,y)\to d_s(x,y)$ for fixed $s,$ we see that $u$ satisfies (\ref{8-gausssing}).

Next let $u$ be any such limit and $d\mu_s=u(\cdot,s)dg_s.$ By the upper bound in (\ref{8-gausssing}) and the volume bound (\ref{6.1}) of Lemma \ref{uppervb}, the same computation in the proof of Lemma \ref{dis2lb} gives, for $k=1,2,$
\begin{align*}
\int_Xd_s^k(x,y)\,d\mu_s(y)&\leq\sum_{j=-\infty}^{\infty}|B(x,s,2^j)|_s\frac{C_1}{(T-s)^n}\exp\Big(-\frac{2^{2j-2}}{C_2(T-s)}\Big)2^{jk}\\
&\leq C(T-s)^{k/2}\sum_{j=-\infty}^{\infty}\rho_j^{m+k}e^{-\rho_j^2/4C_2}\leq C(T-s)^{k/2},
\end{align*}
where $\rho_j:=\frac{2^j}{\sqrt{T-s}}$. The case $k=1$ is the first inequality in (\ref{8-moment}), and the case $k=2$ gives $$\mathrm{Var}_s(\mu_s)\leq4\int_Xd_s^2(x,y)d\mu_s(y)\leq C_4(T-s).$$
Hence $\mu\in\mathcal{M}_T.$ This proves $(b),(c)$ for every subsequential limit.

Now let $u,u'$ be two subsequential limits (possibly along different sequences $(x_j,t_j),(x'_j,t'_j)$ with the same limit point $x$), with associated flows $\mu,\mu'.$ By (\ref{8-moment}) and the triangle inequality for $d^{(s)}_{W_1},$ we have
$$d^{(s)}_{W_1}(\mu_s,\mu'_s)\leq d^{(s)}_{W_1}(\mu_s,\delta_x)+d^{(s)}_{W_1}(\delta_x,\mu'_s)\leq2C_3\sqrt{T-s}.$$
By (\ref{8-W1mono}), we have that $d^{(s)}_{W_1}(\mu_s,\mu'_s)$ is nondecreasing in $s.$ Hence for every fixed $s_0\in[0,T)$ and all $s\in[s_0,T)$, we obtain $$d^{(s_0)}_{W_1}(\mu_{s_0},\mu'_{s_0})\leq2C_3\sqrt{T-s}\to0.$$
Letting \(s\to T\), we obtain \(\mu_{s_0}=\mu'_{s_0}\). Therefore all subsequential limits coincide. This proves $(a)$.

For $(d),$ let $\mu\in\mathcal{M}_T$ and choose $s_i\nearrow T.$ Let \(z_i\in X\) be a minimizer of
$$z\mapsto \int_X d_{s_i}^2(z,y)\,d\mu_{s_i}(y).$$
Averaging over \(z\) with respect to \(\mu_{s_i}\), we obtain
$$\int_Xd_{s_i}^2(z_i,y)d\mu_{s_i}(y)\leq\mathrm{Var}_{s_i}(\mu_{s_i})\to0.$$
After passing to a subsequence, we may assume that $z_i\to y\in X.$ For $s\leq s_i$ the reproduction formula gives
$$\mu_s=\int_X\nu_{w,s_i;s}\,d\mu_{s_i}(w).$$
By the convexity of the \(W_1\)-distance under convex combinations of
measures, together with this representation,
\[
d^{(s)}_{W_1}\big(\mu_s,\nu_{z_i,s_i;s}\big)
\le
\int_X d^{(s)}_{W_1}\big(\nu_{w,s_i;s},\nu_{z_i,s_i;s}\big)
\,d\mu_{s_i}(w).
\]
By \eqref{8-W1mono}, the map
\[
s\mapsto d^{(s)}_{W_1}(\nu_{w,s_i},\nu_{z_i,s_i})
\]
is nondecreasing. At \(s=s_i\) its value equals \(d_{s_i}(w,z_i)\).
Hence the right-hand side is bounded by
\[
\int_X d_{s_i}(w,z_i)\,d\mu_{s_i}(w).
\]
By the Cauchy--Schwarz inequality and the minimizing property of
\(z_i\),
\[
\int_X d_{s_i}(w,z_i)\,d\mu_{s_i}(w)
\le
\Big(\int_X d_{s_i}^2(w,z_i)\,d\mu_{s_i}(w)\Big)^{1/2}
\le
\mathrm{Var}_{s_i}(\mu_{s_i})^{1/2}.
\]
Combining the preceding inequalities gives
\[
d^{(s)}_{W_1}\big(\mu_s,\nu_{z_i,s_i;s}\big)
\le
\mathrm{Var}_{s_i}(\mu_{s_i})^{1/2}.
\]
Letting \(i\to\infty\) at fixed \(s\), and invoking assertion (a) with
\(x_j=z_i\) and \(t_j=s_i\), we obtain
\[
\mu_s=\mu^{(y)}_s.
\]
Thus \(\mu=\mu^{(y)}\), which establishes the representation assertion
in (d) and completes the proof.
\end{proof}

\subsection{The singular time slice}\label{sec8.3}

In this section, we will show that the pointwise limit \(d_T=\lim_{t\to T}d_t\) exists, that the convergence is uniform, and that the Gromov--Hausdorff limit of $(X,d_t)$ is unique. The following theorem strengthens Remark \ref{remark-holder}. The uniqueness part is the first assertion of Li-Zhang \cite[Theorem 1.1]{LZ26}. Our proof below is different, and it also gives the uniform convergence of the distance functions.

\begin{theorem}{\rm(The singular time slice)}\label{thmB}
Under the assumption of {\rm Theorem \ref{localdistortion}}, the following hold.
\begin{itemize}
\item[$(a)$] For all $x,y\in X$ the limit $$d_T(x,y):=\lim_{t\to T}d_t(x,y)$$ exists, and
$$\sup_{x,y\in X}|d_t(x,y)-d_T(x,y)|\longrightarrow0~~~~\text{as}~t\to T.$$
\item[$(b)$] Let $x\sim y$ if and only if $d_T(x,y)=0.$ Then $(X/\!\sim,d_T)$ is a compact metric space, the quotient map
    $$\Phi_T:X\longrightarrow X/\!\sim$$
    is continuous, and
$$(X,d_t)\longrightarrow(X/\!\sim,d_T)~~~~\text{in the Gromov-Hausdorff topology as}~t\to T.$$
In particular the Gromov-Hausdorff limit of $(X,d_t)$ as $t\to T$ is unique.
\item[$(c)$] The map $x\mapsto\mu^{(x)}$ of {\rm Lemma \ref{localization}} induces an isometry $$(X/\!\sim,d_T)\to(\mathcal{M}_T,d_{\mathcal{M}_T})$$ onto Bamler's asymptotic boundary, which is isometric to the time-zero slice $(Z_0,d_0^Z)$ of Fang-Li {\rm\cite[Lemma 9.1]{FL25}}.
\end{itemize}
\end{theorem}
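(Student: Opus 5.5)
The plan is to compare $d_t$ with Bamler's $W_1$-distance between the singular-time conjugate heat flows of Lemma \ref{localization}. For $x,y\in X$ and $t<T$ set
$$f_t(x,y):=d^{(t)}_{W_1}\big(\mu^{(x)}_t,\mu^{(y)}_t\big).$$
By (\ref{8-W1mono}), $t\mapsto f_t(x,y)$ is nondecreasing, and it converges to $d_{\mathcal{M}_T}(\mu^{(x)},\mu^{(y)})=:\bar d(x,y)\le \diam$ bound (\ref{diameterbd}). By the triangle inequality for $d^{(t)}_{W_1}$ and the first moment bound (\ref{8-moment}),
$$|f_t(x,y)-d_t(x,y)|\le d^{(t)}_{W_1}(\mu^{(x)}_t,\delta_x)+d^{(t)}_{W_1}(\mu^{(y)}_t,\delta_y)\le 2C_3\sqrt{T-t},$$
uniformly in $x,y$. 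Hence $d_t(x,y)\to\bar d(x,y)$ pointwise, so $d_T=\bar d$ exists. Uniform convergence of $d_t$ is then equivalent to uniform convergence of $f_t\nearrow d_T$. One direction is already uniform:
$$d_t\le f_t+2C_3\sqrt{T-t}\le d_T+2C_3\sqrt{T-t}.$$

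The main step is the reverse uniform inequality $d_T\le f_t+\varepsilon$ for $t$ close to $T$. I would obtain it by a Dini-type argument, in two parts.

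First, equicontinuity. The functions $f_t$ are $1$-Lipschitz with respect to the pseudo-distance $d_T$. Indeed, by monotonicity,
$$|f_t(x,y)-f_t(x',y)|\le d^{(t)}_{W_1}(\mu^{(x)}_t,\mu^{(x')}_t)\le d_T(x,x'),$$
and similarly in $y$; the same holds for $d_T$ itself.

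Second, total boundedness of $(X,d_T)$. This is where packing enters, and I expect it to be the main obstacle. Suppose $p_1,\dots,p_N$ are pairwise $\varepsilon$-separated for $d_T$. By pointwise convergence of finitely many distances, they are $\varepsilon/2$-separated for $d_t$ once $t$ is close to $T$. The balls $B(p_k,t,\varepsilon/4)$ are then disjoint. By (\ref{noncollapse2}) each has volume at least $C_2(\varepsilon/4)^\theta[\omega(t)^n]$, while the total volume is $[\omega(t)^n]$ up to a fixed factor. Hence $N\le N(\varepsilon)$, independently of $t$. One has to be careful here that the volume normalizations of (\ref{noncollapse2}) match the ones fixed in the Conventions; only the finiteness of $\theta$ matters.

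With a finite $\varepsilon$-net $\{p_k\}$ for $d_T$, choose $t_\varepsilon$ such that $d_T(p_k,p_l)-f_t(p_k,p_l)<\varepsilon$ for all $k,l$ and all $t\ge t_\varepsilon$. This uses monotonicity and finitely many pairs. The Lipschitz bounds then give $0\le d_T-f_t\le 5\varepsilon$ on all of $X\times X$, which proves $(a)$.

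For $(b)$, continuity of $\Phi_T$ from the manifold topology follows from
$$d_T(x,x')\le f_t(x,x')+\delta\le d_t(x,x')+2C_3\sqrt{T-t}+\delta,$$
together with continuity of $d_t$. Thus $(X/\!\sim,d_T)$ is a continuous image of compact $X$, hence compact. The identity correspondence $X\to X/\!\sim$ has distortion $\sup|d_t-d_T|\to0$, which gives Gromov--Hausdorff convergence and uniqueness of the limit.

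For $(c)$, the map $[x]\mapsto\mu^{(x)}$ is well defined and isometric because $d_T(x,y)=d_{\mathcal{M}_T}(\mu^{(x)},\mu^{(y)})$ by construction. It is injective by Lemma \ref{localization}$(d)$ and surjective by the representation assertion there. The identification with $(Z_0,d^Z_0)$ is then \cite[Lemma 9.1]{FL25}.
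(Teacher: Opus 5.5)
Your proposal is correct and follows the paper's proof essentially step for step. Both arguments compare $d_t$ with the nondecreasing $W_1$-distances $F_t(x,y)=d^{(t)}_{W_1}(\mu^{(x)}_t,\mu^{(y)}_t)$ via \eqref{8-moment}, use the bound $|F_t(x,y)-F_t(x',y)|\le d_T(x,x')$, get total boundedness of $(X,d_T)$ by packing with \eqref{noncollapse2}, and finish with a Dini-type argument. The only cosmetic difference is that you run Dini directly on a finite $\varepsilon$-net of $(X,d_T)$, which yields $0\le d_T-F_t\le 5\varepsilon$, whereas the paper first passes to the completion $\bar Z$ of $X/\!\sim$, extends $F_t$ there and invokes Dini's theorem; your version makes that completion step unnecessary.
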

\begin{proof}
For $x,y\in X$ and $t\in[0,T)$ set
$$F_t(x,y):=d^{(t)}_{W_1}\big(\mu^{(x)}_t,\mu^{(y)}_t\big).$$
By (\ref{8-moment}) and the triangle inequality, we have $$|F_t(x,y)-d_t(x,y)|=|d^{(t)}_{W_1}(\mu^{(x)}_t,\mu^{(y)}_t)-d^{(t)}_{W_1}(\delta_x,\delta_y)|\leq2C_3\sqrt{T-t},$$
that is
\begin{equation}\label{8-Fd}
|F_t(x,y)-d_t(x,y)|\leq2C_3\sqrt{T-t}~~~~\text{for all}~x,y\in X,~t\in[0,T).
\end{equation}
By (\ref{8-W1mono}), we obtain that $F_t(x,y)$ is nondecreasing in $t$. Moreover, by \eqref{diameterbd}, it is bounded above. Hence $$\lim_{t\nearrow T}F_t(x,y)=\sup_{t\in[0,T)}F_t(x,y)$$ exists. In view of \eqref{8-Fd}, the same holds for
$$d_T(x,y)=\lim_{t\nearrow T}d_t(x,y).$$
Moreover $$d_T(x,y)=\sup_{t\in[0,T)}F_t(x,y)=d_{\mathcal{M}_T}(\mu^{(x)},\mu^{(y)}).$$
This gives the existence part of $(a)$ (a second proof of Remark \ref{remark-holder}). Together with Lemma \ref{localization} $(d)$, it proves assertion $(c)$. Note that $F_t\leq d_T$, and the triangle inequality for $d^{(t)}_{W_1}$ yields
\begin{equation}\label{8-Lip}
|F_t(x,y)-F_t(x',y)|\leq F_t(x,x')\leq d_T(x,x').
\end{equation}
It follows that $F_t$ is $1$-Lipschitz in each variable with respect to the pseudo-distance $d_T.$

{\bf Step 1: total boundedness.} Let $x^1,\dots,x^N\in X$ satisfy $d_T(x^k,x^l)\geq\varepsilon$ for $k\neq l,$ where $0<\varepsilon\leq1.$ Since there are finitely many pairs, we can choose $t<T$ such that
$$F_t(x^k,x^l)\geq\varepsilon/2~~ ~\forall~k\neq l$$ and
$$2C_3\sqrt{T-t}\leq\varepsilon/8.$$
By (\ref{8-Fd}), we have
$$d_t(x^k,x^l)\geq3\varepsilon/8.$$
Hence the balls $B(x^k,t,3\varepsilon/16)$ are pairwise disjoint.
By (\ref{noncollapse2}) and the volume lower bound in Step 1 of the proof of Theorem \ref{ZhZh22}, each of them has volume at least $c(3\varepsilon/16)^{\theta},$ whereas $|X|_t\leq C.$ Hence $$N\leq N(\varepsilon):=C\varepsilon^{-\theta}.$$
Thus $(X/\!\sim,d_T)$ is totally bounded. Let $(\bar{Z},\bar{d})$ be its completion, which is a compact metric space.

{\bf Step 2: uniform convergence.}
By \eqref{8-Lip}, \(F_t\) descends to the quotient \(X/\!\sim\) and extends continuously to \(\bar{Z}\times\bar{Z}\). We still denote this extension by \(F_t\). It is nondecreasing in \(t\) and satisfies \(F_t\leq\bar{d}\). Now fix \(z,z'\in\bar{Z}\) and \(\epsilon>0\). Choose \(x,x'\in X\) such that
$$\bar{d}(x,z)<\epsilon,~~\bar{d}(x',z')<\epsilon.$$
Then by (\ref{8-Lip})
$$\liminf_{t\to T}F_t(z,z')\geq\lim_{t\to T}F_t(x,x')-2\epsilon=d_T(x,x')-2\epsilon\geq\bar{d}(z,z')-4\epsilon.$$
Hence $F_t\nearrow\bar{d}$ pointwise on the compact space $\bar{Z}\times\bar{Z}.$ Since all functions involved are continuous, Dini's theorem yields
$$\eta(t):=\sup_{\bar{Z}\times\bar{Z}}(\bar{d}-F_t)\longrightarrow0~~~~\text{as}~t\to T.$$
Combined with (\ref{8-Fd}) this yields
\begin{equation}\label{8-unif}
\sup_{x,y\in X}|d_t(x,y)-d_T(x,y)|\leq2C_3\sqrt{T-t}+\eta(t)\longrightarrow0,
\end{equation}
which is $(a).$

{\bf Step 3: compactness of the quotient and Gromov-Hausdorff convergence.} Each $d_t$ is continuous on $X\times X$ in the manifold topology. By (\ref{8-unif}), $d_T$ is their uniform limit and is therefore continuous. In particular, if $x_k\to x$ in $X$, then $d_T(x_k,x)\to0$. Thus the quotient map
$$\Phi_T:X\longrightarrow(X/\!\sim,d_T)$$
is continuous. Its image is compact, so $(X/\!\sim,d_T)$ is complete and agrees with $(\bar Z,\bar d)$.

By (\ref{8-unif}), the surjective map
$$(X,d_t)\longrightarrow(X/\!\sim,d_T)$$
changes distances by at most
$$\epsilon(t):=2C_3\sqrt{T-t}+\eta(t).$$
Its graph is a correspondence of distortion at most $\epsilon(t)$, and hence
$$d_{GH}((X,d_t),(X/\!\sim,d_T))\leq\tfrac12\epsilon(t)\longrightarrow0.$$
This proves $(b)$ and completes the proof.
\end{proof}

\begin{remark}\label{remarkB}
The type-I bound is used only via \eqref{8-moment}, that is, through the localization of the singular-time conjugate heat kernels. Without it, one does not know that every point of $X$ lies at distance $O(\sqrt{T-t})$ from the $H_m$-center of some $\mu\in\mathcal{M}_T$. In \cite{LZ26} this is replaced by the construction of regular points in every ball by means of the curvature radius estimates of Fang-Li \cite{FL25}. Within the proof of Theorem \ref{thmB}, Step 1 is the only place where the noncollapsing estimate \eqref{noncollapse2} is used. \qed
\end{remark}

\subsection{The regular part and the K\"ahler space induced by the limiting class}\label{sec8.4}
By \cite[Theorem 4.1]{CT15}, the metrics $\omega(t)$ converge, as $t\to T$, in $C^{\infty}_{loc}(X^{\circ})$ to a K\"ahler metric $\omega_T$ on $X^{\circ}=X\setminus\mathrm{Null}(\alpha)$, which extends to a positive current on $X$ with $[\omega_T]=\alpha$. We denote by $d_{\omega_T}$ the intrinsic length distance of $(X^{\circ},\omega_T)$, in the normalization of Section \ref{preliminary}, and by
$$\Phi_T:X\longrightarrow Q:=(X/\!\sim,d_T)$$
the quotient map of Theorem \ref{thmB}. The intrinsic distance $d_{\omega_T}$ uses only curves in $X^{\circ}$ and may be incomplete.

\begin{proposition}{\rm(The regular part)}\label{propY}
Under the assumption of {\rm Theorem \ref{localdistortion}}, the following hold.
\begin{itemize}
\item[$(a)$] $\Phi_T|_{X^{\circ}}$ is injective, $\Phi_T(X^{\circ})$ is open in $Q$, and
$$\Phi_T:(X^{\circ},d_{\omega_T})\longrightarrow Q$$
is a $1$-Lipschitz local isometry. In particular the Gromov-Hausdorff limit of $(X,d_t)$ contains the K\"ahler manifold $(X^{\circ},\omega_T)$ as an open subset, and $\Phi_T(X^{\circ})$ is the regular part of the limit in the spacetime sense of {\rm \cite{FL25,LZ26}}. $\sim$ identifies only points of $\mathrm{Null}(\alpha).$
\item[$(b)$] Assume in addition \eqref{omega0-omegay}, with $Y$ connected. Then $k=n$ and $\pi$ is surjective and generically finite. The null locus is the union of the positive-dimensional irreducible components of the fibres of $\pi$. Moreover, $\pi$ descends to a surjective Lipschitz map
$$\bar\pi:Q\longrightarrow(Y,d_{\omega_Y}),\qquad \bar\pi\circ\Phi_T=\pi.$$
This map is a homeomorphism if and only if
\begin{equation}\label{8-fibre}
\sup_{q\in Y}\diam_{d_t}\big(\pi^{-1}(q)\big)\longrightarrow0\qquad\text{as }t\to T.
\end{equation}
\item[$(c)$] If, in addition to $(b)$, $\pi$ is a bimeromorphic contraction onto $Y$, then $\mathrm{Null}(\alpha)=\mathrm{Exc}(\pi)$ and
$$\pi:X^{\circ}\longrightarrow Y\setminus\pi\big(\mathrm{Exc}(\pi)\big)$$
is a biholomorphism. Consequently, $\bar\pi|_{\Phi_T(X^{\circ})}$ is biholomorphic for the complex structure inherited from $X^{\circ}$.
\end{itemize}
\end{proposition}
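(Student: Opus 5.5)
The plan rests on two comparisons on the regular part and a fibre argument for $\pi$. On $X^{\circ}$ we have smooth local convergence $\omega(t)\to\omega_T$ by \cite[Theorem 4.1]{CT15}. On all of $X$ we have the uniform convergence $d_t\to d_T$ of Theorem \ref{thmB}.

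\textbf{Step 1: the Lipschitz bound in (a).} Let $\gamma\subset X^{\circ}$ be a curve. Then $\gamma$ lies in a compact subset $K\Subset X^{\circ}$, and on $K$ we have $\omega(t)\to\omega_T$ smoothly. Hence $d_t(\gamma(0),\gamma(1))\le L_{\omega(t)}(\gamma)\to L_{\omega_T}(\gamma)$. Passing to the limit and taking the infimum over curves gives $d_T\le d_{\omega_T}$, so $\Phi_T$ is $1$-Lipschitz on $X^{\circ}$.

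\textbf{Step 2: local isometry and injectivity in (a).} Fix $x\in X^{\circ}$ and a coordinate ball $U\Subset X^{\circ}$ around $x$. On $U\times[T-\delta,T)$ the curvature is bounded and the metrics are uniformly equivalent. I claim that for $r$ small, every $d_t$-minimizing geodesic between points of $B_{\omega_T}(x,r)$ stays in $U$. If a geodesic leaves $U$, then since $\omega(t)$ is uniformly equivalent to $\omega_T$ on $U$, it crosses the annulus $U\setminus B_{\omega_T}(x,3r)$, which has $d_t$-width bounded below, and so it is longer than $4r$ for small $r$. Consequently $d_t=d^U_{\omega(t)}$ on $B_{\omega_T}(x,r)$. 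Letting $t\to T$ gives $d_T=d_{\omega_T}$ there. This shows that $\Phi_T$ is a local isometry near $x$. The same estimate shows $d_T(x,y)>0$ whenever $y\notin U$, because any near-minimizing $d_t$-path from $x$ must cross the annulus. So $x$ is identified with no other point, and $\sim$ only identifies points of $\mathrm{Null}(\alpha)$. The same width bound, together with uniform convergence, gives openness of $\Phi_T(X^{\circ})$: a $d_T$-small ball around $\Phi_T(x)$ pulls back inside $U$. Finally, identifying $\Phi_T(X^{\circ})$ with the regular part of \cite{FL25,LZ26} uses the isometry of Theorem \ref{thmB}(c) together with smooth convergence on $X^{\circ}$; I will cite that identification rather than reprove it.

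\textbf{Step 3: descent of $\pi$ in (b).} Bigness of $[\pi^*\omega_Y]$ forces $k=n$, so $\pi$ is generically finite, hence surjective. By \cite{CT15}, the null locus of a pullback class is the union of the positive-dimensional components of the fibres. For the descent, the parabolic Schwarz lemma gives $\tr_{\omega(t)}\pi^*\omega_Y\le C$, so $\pi:(X,d_t)\to(Y,d_{\omega_Y})$ is $C$-Lipschitz uniformly in $t$. Passing to the limit, $d_{\omega_Y}(\pi x,\pi y)\le C d_T(x,y)$. Thus $\pi$ is constant on $\sim$-classes and descends to a Lipschitz surjection $\bar\pi$.

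\textbf{Step 4: the homeomorphism criterion in (b).} Since $Q$ is compact and $Y$ is Hausdorff, $\bar\pi$ is a homeomorphism if and only if it is injective. Injectivity means $d_T$ vanishes on each fibre. If \eqref{8-fibre} holds, uniform convergence gives exactly this. Conversely, suppose $\bar\pi$ is injective. Then $d_T(x,y)=0$ whenever $\pi x=\pi y$. Uniform convergence $d_t\to d_T$ then yields $\sup_q\diam_{d_t}\pi^{-1}(q)\le\sup|d_t-d_T|\to0$.

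\textbf{Step 5: part (c).} For a bimeromorphic contraction, $\mathrm{Null}(\alpha)=\mathrm{Exc}(\pi)$ is standard \cite{CT15}. The restriction $\pi:X^{\circ}\to Y\setminus\pi(\mathrm{Exc}(\pi))$ is biholomorphic by Zariski's main theorem, since $Y$ is smooth and the fibres over this set are finite and connected, hence points. The biholomorphism statement for $\bar\pi|_{\Phi_T(X^{\circ})}$ then follows from $\bar\pi\circ\Phi_T=\pi$ and Step 2.

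The main obstacle is Step 2: controlling minimizing $d_t$-geodesics so that they do not shortcut through the singular region. I would try the annulus argument above first, using uniform equivalence of $\omega(t)$ and $\omega_T$ on compact subsets of $X^{\circ}$.
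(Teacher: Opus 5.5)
Your proposal follows essentially the same route as the paper. The matching steps are the $1$-Lipschitz bound from lengths of curves in $X^{\circ}$, the exit-length comparison on a small $\omega_T$-ball, the Schwarz-lemma descent of $\pi$, the compact-to-Hausdorff criterion combined with uniform convergence $d_t\to d_T$, and connected fibres over a normal target for (c). For the exit-length step the paper takes $U=B_{\omega_T}(x,4r)$ with $(1\pm\varepsilon_t)$-closeness, which is equivalent to your uniform-equivalence-plus-small-$r$ version. One caveat on the regular-part claim you defer: smooth convergence on $X^{\circ}$ by itself only gives $\Phi_T(X^{\circ})\subset Q^{\mathrm{reg}}$. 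The reverse inclusion needs the worldline description of regular points from \cite[Proposition 2.3]{LZ26} together with the null-volume contradiction $\int_W\omega(t)^l\geq c^l\int_{W\cap U_x}\omega_0^l>0$ versus $\int_W\alpha^l=0$, which the paper supplies.
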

\begin{proof}
$(a)$ Put $Q^{\circ}:=\Phi_T(X^{\circ})$, and let $g_T=2g_{\omega_T}$ be the limiting Riemannian metric on $X^{\circ}$. We prove the local metric assertions by the geodesic comparison argument of \cite[Lemma 3.9]{Guo19}.

\noindent$\bullet$ {\it $\Phi_T|_{X^{\circ}}$ is injective and $Q^{\circ}$ is open.}
Fix $x\in X^{\circ}$ and choose a neighborhood $U\Subset X^{\circ}$ of $x$. By the smooth convergence $g_t\to g_T$ on $\overline U$, there are $c>0$ and $t_0<T$ such that $g_t\geq cg_0$ on $\overline U$ for $t\geq t_0$. Any curve joining $x$ to $y\in X\setminus U$ must meet $\partial U$. Its initial segment therefore has length at least
$$\rho_U:=\sqrt c\,d_0(x,X\setminus U)>0.$$
Hence $d_t(x,y)\geq\rho_U$, and letting $t\nearrow T$ gives $d_T(x,y)\geq\rho_U$. For any $y\neq x$, we may choose $U$ excluding $y$, so $d_T(x,y)>0$. In particular, no point of $X^{\circ}$ is identified with another point of $X$.

Moreover,
$$B_Q(\Phi_T(x),\rho_U)\subset\Phi_T(U),$$
which proves that $Q^{\circ}$ is open. Choosing $U$ inside any prescribed open neighborhood of $x$ in $X^{\circ}$ also shows that $\Phi_T|_{X^{\circ}}$ is an open map. Since $\Phi_T$ is continuous by Theorem \ref{thmB}, its restriction is a homeomorphism from $X^{\circ}$ onto $Q^{\circ}$.

\noindent$\bullet$ {\it $\Phi_T|_{X^{\circ}}$ is a local isometry.}
Fix $x\in X^{\circ}$. Since $g_T$ is smooth and positive definite, we can choose $r>0$ such that
$$U:=B_{\omega_T}(x,4r)\Subset X^{\circ}$$
is geodesically convex. Put $V:=B_{\omega_T}(x,r)$. Smooth convergence on $\overline U$ gives $\varepsilon_t\to0$ such that, for late $t$,
$$(1-\varepsilon_t)g_T\leq g_t\leq(1+\varepsilon_t)g_T
\quad\hbox{on }\overline U,\qquad 0\leq\varepsilon_t<\frac1{10}.$$
For $p,q\in V$, let $\sigma\subset U$ be a minimizing $g_T$-geodesic joining them. Then
$$d_t(p,q)\leq L_{g_t}(\sigma)
\leq\sqrt{1+\varepsilon_t}\,d_{\omega_T}(p,q)
\leq2r\sqrt{1+\varepsilon_t}.$$
On the other hand, any curve starting at $p\in V$ and leaving $U$ has, up to its first exit, $g_T$-length at least $3r$. Its $g_t$-length is therefore at least
$$3r\sqrt{1-\varepsilon_t}>2r\sqrt{1+\varepsilon_t}.$$
It follows that a minimizing $g_t$-geodesic $\gamma_t$ from $p$ to $q$ stays in $U$. Consequently,
$$d_t(p,q)=L_{g_t}(\gamma_t)
\geq\sqrt{1-\varepsilon_t}\,L_{g_T}(\gamma_t)
\geq\sqrt{1-\varepsilon_t}\,d_{\omega_T}(p,q).$$
Combining the two inequalities yields
\begin{equation}\label{8-localcompare}
\sqrt{1-\varepsilon_t}\,d_{\omega_T}(p,q)
\leq d_t(p,q)\leq
\sqrt{1+\varepsilon_t}\,d_{\omega_T}(p,q),
\qquad p,q\in V.
\end{equation}
Letting $t\nearrow T$, we obtain
$$d_T(p,q)=d_{\omega_T}(p,q),\qquad p,q\in V.$$
Since $\Phi_T(V)$ is open in $Q$, the restriction $\Phi_T|_V$ is an isometry onto an open neighborhood of $\Phi_T(x)$. This proves the local isometry assertion.

For any $p,q\in X^{\circ}$ and any piecewise smooth curve $\gamma\subset X^{\circ}$ joining them, smooth convergence on its compact image gives
$$d_T(p,q)=\lim_{t\nearrow T}d_t(p,q)
\leq\lim_{t\nearrow T}L_{g_t}(\gamma)=L_{g_T}(\gamma).$$
Taking the infimum over $\gamma$ proves $d_T(p,q)\leq d_{\omega_T}(p,q)$. Thus $\Phi_T$ is globally $1$-Lipschitz.

\noindent$\bullet$ {\it $Q^{\circ}$ is the regular part of $Q$.}
Let $I:Q\to(Z_0,d_0^Z)$ be the canonical isometry of Theorem \ref{thmB}$(c)$, and let $Q^{\mathrm{reg}}:=I^{-1}(Z_0\cap\mathcal R)$, where $\mathcal R$ is the regular spacetime of the completion. For $x\in X^{\circ}$, the flow extends smoothly to $T$ on a neighborhood of $x$. The worldline $(x,t)$ therefore has a regular terminal endpoint. Its backward kernels are $\mu^{(x)}$ by Lemma \ref{localization}$(a)$, so this endpoint is $I(\Phi_T(x))$; see \cite[Lemma 2.2]{LZ26}. Hence $Q^{\circ}\subset Q^{\mathrm{reg}}$.

Conversely, let $z\in Q^{\mathrm{reg}}$. By the regular-worldline argument of \cite[Proposition 2.3]{LZ26}, $I(z)$ is the endpoint of $(x,t)$ for a fixed $x\in X$ near $T$, and there are a fixed neighborhood $U_x$ and a constant $c>0$ such that $\omega(t)\geq c\omega_0$ on $U_x$ for late $t$. Lemma \ref{localization}$(a)$ identifies the backward kernels of this endpoint with $\mu^{(x)}$. Therefore $z=\Phi_T(x)$.

If $x\in\mathrm{Null}(\alpha)$, there is a positive-dimensional irreducible analytic subvariety $W$ through $x$, of dimension $l$, such that $\int_W\alpha^l=0$. However,
$$\int_W\omega(t)^l
\geq c^l\int_{W\cap U_x}\omega_0^l>0,$$
uniformly for late $t$, whereas
$$\int_W\omega(t)^l\longrightarrow\int_W\alpha^l=0.$$
This is a contradiction. Thus $x\in X^{\circ}$, and $Q^{\mathrm{reg}}\subset Q^{\circ}$.

Finally, $X^{\circ}$ is dense in $X$, and $\Phi_T:X\to Q$ is continuous and surjective. Hence $Q^{\circ}$ is dense in $Q$. The injectivity proved above shows that $\sim$ identifies only points of $\mathrm{Null}(\alpha)$. This completes the proof of $(a)$.

$(b)$ Since $\alpha=[\pi^*\omega_Y]$ and
\[
\alpha^n=\big([\omega_0]-2\pi Tc_1(X)\big)^n=\int_X\beta^n>0,
\]
we must have $k=n$. Indeed, if $k<n$, then
$(\pi^*\omega_Y)^n=0$ in cohomology, contradicting $\alpha^n>0$.
Thus $\pi$ is a holomorphic map between compact complex manifolds of the
same dimension. Since $\alpha^n=(\pi^*\omega_Y)^n>0$, the map $\pi$ is
dominant; as $Y$ is connected, it is surjective. Moreover, $\pi$ is
generically finite.

We next identify the null locus. Let $W\subset X$ be an irreducible
analytic subvariety of dimension $d$. Then
\[
\int_W\alpha^d=\int_W(\pi^*\omega_Y)^d.
\]
If $\dim\pi(W)=d$, then $\pi|_W:W\to\pi(W)$ is generically finite and
\[
\int_W(\pi^*\omega_Y)^d
=\deg(\pi|_W)\int_{\pi(W)}\omega_Y^d>0.
\]
If $\dim\pi(W)<d$, then the pullback of the top-degree form
$\omega_Y^d$ vanishes generically, so the integral is zero. Hence
\begin{equation}\label{8-rankcriterion}
\int_W\alpha^d=0
\quad\Longleftrightarrow\quad
\dim\pi(W)<d.
\end{equation}
By the fibre dimension theorem, the latter condition means that every
point of $W$ lies on a positive-dimensional irreducible component of its
fibre $\pi^{-1}(\pi(w))$. Conversely, if $F$ is a positive-dimensional
irreducible component of a fibre of $\pi$, then $\pi(F)$ is a point, so
$\int_F\alpha^{\dim F}=0$. Therefore the null locus is exactly the union
of all positive-dimensional irreducible components of the fibres of
$\pi$.

By the parabolic Schwarz lemma \cite{ST07,ST12}, there is a constant
$C<\infty$ such that
\[
\pi^*\omega_Y\le C\omega(t)\qquad\text{on }X\times[0,T).
\]
Hence, for all $x,y\in X$ and $t\in[0,T)$,
\[
d_{\omega_Y}\big(\pi(x),\pi(y)\big)\le C^{1/2}d_t(x,y).
\]
Letting $t\to T$ and using the uniform convergence $d_t\to d_T$ from
Theorem \ref{thmB}(a), we obtain
\[
d_{\omega_Y}\big(\pi(x),\pi(y)\big)\le C^{1/2}d_T(x,y).
\]
Thus $d_T(x,y)=0$ implies $\pi(x)=\pi(y)$, so $\pi$ descends to a
surjective Lipschitz map
\[
\bar\pi:Q\longrightarrow(Y,d_{\omega_Y}),\qquad
\bar\pi\circ\Phi_T=\pi.
\]

It remains to prove the equivalence with \eqref{8-fibre}. Since $Q$ is
compact and $Y$ is Hausdorff, the continuous surjection $\bar\pi$ is a
homeomorphism if and only if it is injective. Injectivity is equivalent
to
\[
\diam_{d_T}\big(\pi^{-1}(q)\big)=0
\qquad\text{for every }q\in Y.
\]
Set
\[
\delta(t):=\sup_{x,y\in X}|d_t(x,y)-d_T(x,y)|.
\]
By Theorem \ref{thmB}(a), $\delta(t)\to0$ as $t\to T$. For every $q\in Y$
we have
\[
\left|\diam_{d_t}\big(\pi^{-1}(q)\big)
-\diam_{d_T}\big(\pi^{-1}(q)\big)\right|\le\delta(t).
\]
Therefore, if $\bar\pi$ is injective, then
$\diam_{d_T}(\pi^{-1}(q))=0$ for all $q$, and hence
\[
\sup_{q\in Y}\diam_{d_t}\big(\pi^{-1}(q)\big)\le\delta(t)\to0.
\]
Conversely, if
\[
\sup_{q\in Y}\diam_{d_t}\big(\pi^{-1}(q)\big)\to0,
\]
then for each fixed $q$,
\[
\diam_{d_T}\big(\pi^{-1}(q)\big)
\le\diam_{d_t}\big(\pi^{-1}(q)\big)+\delta(t)\to0,
\]
so $\diam_{d_T}(\pi^{-1}(q))=0$. Thus $\bar\pi$ is injective, hence a
homeomorphism. This proves $(b)$.

$(c)$ A proper bimeromorphic map onto a normal target has connected fibres. Where its fibres are zero-dimensional it is locally finite and birational, hence an isomorphism. Thus its exceptional locus is exactly the locus of positive-dimensional fibre components, which equals $\mathrm{Null}(\alpha)$ by \eqref{8-rankcriterion}. Removing the exceptional locus and its image gives the stated biholomorphism. Composing with the inverse of $\Phi_T|_{X^{\circ}}$ proves the assertion about $\bar\pi$.
\end{proof}

\begin{remark}\label{fibreremark}
The contraction assumption in Proposition \ref{propY}$(c)$ cannot be inferred from bigness. For instance, let
$$b:X=\mathrm{Bl}_{[1:1:1]}\mathbb P^2\to\mathbb P^2$$
be the blow-up, with exceptional curve $E$, and put
$$H=b^*(2\pi c_1(\mathcal O(1)))~~~ \text{and}~~~e=2\pi c_1(\mathcal O_X(E)).$$
The K\"ahler-Ricci flow with $[\omega_0]=5H-e$ has maximal time $T=1$ and limiting class $\alpha=2H$, with $\mathrm{Null}(\alpha)=E$. The map $\pi=q_2\circ b$, where $$q_2([z_0:z_1:z_2])=[z_0^2:z_1^2:z_2^2],$$
satisfies $\alpha=\pi^*[\omega_Y]$ for a Fubini--Study form in $2\pi c_1(\mathcal O(1))$. Nevertheless, $\pi$ is generically four-to-one and ramifies outside $E$, and its fibre over $[1:1:1]$ consists of $E$ and three isolated regular points.

For a bimeromorphic contraction, Proposition \ref{propY} reduces the identification of the Gromov--Hausdorff limit with the target, equipped with the transported limit distance, to the fibre collapse \eqref{8-fibre}. The latter is known for the contractions considered in \cite[Lemma 3.2]{SW13}, where the ambient fibre diameter is bounded by $C(T-t)^{1/3}$; see also \cite[Lemma 3.10]{Guo19}. The stronger Type I fibre diameter bound
$$\diam_{d_t}\pi^{-1}(q)\leq C\sqrt{T-t}$$
is the second half of \cite[Conjecture 1.1]{JST23}. It holds for the solutions with Calabi symmetry considered in \cite[Section 10]{JST23}, while further diameter estimates are announced in \cite[Section 1]{JS26}. Our heat kernel estimates do not establish this bound. Lemma \ref{localization} localizes a conjugate heat kernel measure around its base point, not around a fibre which is independently of fibre collapse. The quotient $Q$ is canonical, its regular part is $\Phi_T(X^{\circ})$, and its singular part has Hausdorff dimension at most $2n-4$ by \cite[Theorem 1.2]{LZ26}. \qed
\end{remark}

\subsection{Type I blow-up limits}\label{sec8.5}
Let $x\in X,$ $t_i\nearrow T$ and $\tau_i:=T-t_i.$ The {\it Type I blow-ups} of the flow at $x$ are the rescaled flows
\begin{equation}\label{8-rescaled}
g_i(s):=\tau_i^{-1}g(T+\tau_is),~~~~s\in[-T/\tau_i,0),
\end{equation}
so that $g_i(-1)=\tau_i^{-1}g(t_i).$ We are interested in the pointed limits of the time slices $(X,\tau_i^{-1/2}d_{t_i},x)$ at $s=-1.$ By Theorem \ref{JSthm}, the rescaled scalar curvature satisfies
\begin{equation}\label{8-rescaledR}
|R_{g_i(s)}|\leq\frac{C_0'}{|s|}
\qquad\text{on }X\times[-T/\tau_i,0),
\end{equation}
where \(C_0'=\max\{C_0,R_0T\}\). Hence it is uniformly bounded on every compact subinterval of \((-\infty,0)\). Meanwhile, the Nash entropy of the flows (\ref{8-rescaled}) is bounded below by \(\nu[g_0,T]\) (see (\ref{nashentropy-2side})). We equip the rescaled flows (\ref{8-rescaled}) with the conjugate heat flow \(\mu^{(x)}_{T+\tau_i s}\) from Lemma \ref{localization}, based at the singular time \(s=0\) of the rescaled flows. The main result of this section is as follows.

\begin{theorem}{\rm (Type I blow-up limits)}\label{thmA}
Under the assumption of {\rm Theorem \ref{localdistortion}}, let $x\in X$ and $t_i\nearrow T.$ After passing to a subsequence, the following hold.
\begin{itemize}
\item[$(1)$] The rescaled flows \eqref{8-rescaled}, with the conjugate heat flows $\mu^{(x)}_{T+\tau_is},$ converge in the $\mathbb{F}$-sense of Bamler {\rm\cite{Bam20b}} on compact subintervals of $(-\infty,0)$ to a metric flow $(\mathcal{X},(\mu^{\infty}_s)_{s<0})$ which is a {\it metric soliton}, with constant Nash entropy
    $$\mathcal{N}_{\mu^{\infty}}(\tau)\equiv\Theta(x):=\lim_{\tau\to0}\mathcal{N}_{\mu^{(x)}}(\tau)\in[\nu[g_0,T],0].$$
\item[$(2)$] The time $(-1)$ slice $(Z,d_Z)$ of $\mathcal{X}$ is a singular space in the sense of Bamler {\rm\cite{Bam20c}} of real dimension $2n$. Its singular set has Minkowski dimension at most $2n-4,$ and its regular part $\mathcal{R}_Z$ carries a smooth (in general incomplete) shrinking K\"ahler-Ricci soliton $(g_Z,J_Z,f_Z),$ $$Ric(g_Z)+\nabla^2f_Z=\frac{1}{2}g_Z,$$
    where $J_Z\nabla f_Z$ is holomorphic and Killing, and generates a complete flow.
\item[$(3)$] In the pointed Gromov--Hausdorff topology,
    $$(X,\tau_i^{-1/2}d_{t_i},x)\to(Z,d_Z,z_{\infty}),$$
    and the convergence is smooth on $\mathcal{R}_Z$. Moreover, $z_{\infty}$ lies within distance \(C_3+\sqrt{H_m}\) of an \(H_m\)-center of \(\mu^{\infty}_{-1}\). Equivalently, the type I blow-up limit of the flow at \(x\) is the slice at time \((-1)\) of a tangent flow of the flow at the point \(\mu^{(x)}\in\mathcal{M}_T\).
\item[$(4)$] \(Z\) carries the structure of a normal complex analytic variety with log terminal singularities, and its analytic singular set is the metric singular set \(Z\setminus\mathcal{R}_Z\). The form \(\omega_Z\) extends to a K\"ahler current on \(Z\) with locally bounded potentials. At every point, the tangent cone of \(Z\) is unique, and it is a Ricci-flat K\"ahler cone on an affine variety with log terminal singularities. Outside a subset of complex codimension three, \(Z\) is a smooth orbifold. Moreover, \(Z\) is simply connected, and it admits a polarized Fano fibration in the sense of {\rm \cite{SZ24,HZ26}}.
\item[$(5)$] $\Theta(x)$ does not depend on the subsequence, and $\Theta(x)=0$ if and only if $Z$ is the Gaussian shrinker on $\mathbb{C}^n,$ if and only if $x\in X^{\circ}.$
\end{itemize}
\end{theorem}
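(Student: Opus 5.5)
The plan is to feed the rescaled flows \eqref{8-rescaled}, together with the singular-time conjugate heat flows $\mu^{(x)}_{T+\tau_i s}$, into Bamler's compactness and partial regularity theory \cite{Bam20b,Bam20c}. The metric soliton property will come from the monotonicity of the Nash entropy, and the complex structure from Hallgren--Zhang \cite{HZ26}.

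\textbf{Step 1 (compactness).} The rescaled flows have Nash entropy bounded below by $\nu[g_0,T]$, by \eqref{nashentropy-2side} and scale invariance. Bamler's compactness theorem \cite{Bam20b} therefore gives a subsequence converging in the $\mathbb F$-sense, on compact subintervals of $(-\infty,0)$, to a metric flow $(\mathcal X,(\mu^\infty_s))$. Because the scalar curvature is uniformly bounded on compact time intervals by \eqref{8-rescaledR}, Bamler's partial regularity \cite{Bam20c} applies. It makes each slice a singular space of dimension $2n$ with singular set of Minkowski codimension at least $4$, and it gives smooth convergence on the regular part.

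\textbf{Step 2 (metric soliton).} Write $\mathcal N_{\mu^{(x)}}(\tau)$ for the Nash entropy of $\mu^{(x)}$ at the singular time. It is nonincreasing in $\tau$ and bounded below by $\nu[g_0,T]$, so the limit $\Theta(x)$ as $\tau\to0$ exists. Under rescaling, $\mathcal N$ of the $i$-th flow at $\tau$ equals $\mathcal N_{\mu^{(x)}}(\tau_i\tau)$, which tends to $\Theta(x)$ for every $\tau>0$. This limit is independent of the subsequence, which is the first part of (5). By continuity of Nash entropy under $\mathbb F$-convergence with this entropy bound \cite{Bam20b}, $\mathcal N_{\mu^\infty}\equiv\Theta(x)$ is constant. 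Bamler's rigidity statement \cite[Theorem 2.37]{Bam20c} then says the limit is a metric soliton. On $\mathcal R_Z$ the potential $f$ with $d\mu^\infty=(4\pi\tau)^{-n}e^{-f}$ satisfies the shrinker equation at $\tau=1$. The complex structures $J$ converge smoothly on the regular part, since they are parallel and uniformly bounded, so the soliton is K\"ahler. Then $\nabla^2 f$ is $J$-invariant, so $J\nabla f$ is Killing and holomorphic. Completeness of its flow follows from the soliton's self-similarity, which is implemented by the flow of $\nabla f/\tau$ on the metric flow. This gives (1) and (2).

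\textbf{Step 3 (pointed GH convergence).} By \eqref{8-moment} in rescaled form, $d_{W_1}(\mu^{(x)}_{t_i},\delta_x)\le C_3$ in the metric $\tau_i^{-1/2}d_{t_i}$. An $H_m$-center lies within $\sqrt{H_m}$ of the measure. Hence $x$ stays within distance $C_3+\sqrt{H_m}$ of an $H_m$-center, and the pointed GH convergence of time slices follows from $\mathbb F$-convergence together with smooth convergence on $\mathcal R_Z$ \cite{Bam20c}. Uniform control of GH convergence of slices needs the volume bounds of Lemma \ref{uppervb} and \eqref{noncollapse2}, which prevent both inflation and collapse. The identification with a tangent flow at $\mu^{(x)}\in\mathcal M_T$ is then Lemma \ref{localization}(d).

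\textbf{Step 4 (complex analytic structure).} Here I would quote \cite{HZ26}. Their results give the normal log terminal structure, the uniqueness of tangent cones, the orbifold property outside codimension three, simple connectivity, and the polarized Fano fibration. The hypotheses to verify are the entropy bound, type-I scalar curvature, and K\"ahler structure, all of which hold here.

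\textbf{Step 5 (rigidity in (5)).} If $\Theta(x)=0$, the equality case in \cite{Bam20c} gives the Gaussian soliton. Conversely, the Gaussian soliton has entropy $0$. If $x\in X^\circ$, the smooth local convergence of $\omega(t)$ to $\omega_T$ near $x$ \cite{CT15} makes the blow-up Euclidean $\mathbb C^n$. If $\Theta(x)=0$, Bamler's $\varepsilon$-regularity gives a uniform curvature bound near $(x,t)$ for $t$ close to $T$. Then the regularity argument of Proposition \ref{propY}(a) shows $x\notin\mathrm{Null}(\alpha)$, since $\int_W\omega(t)^l$ cannot tend to $0$.

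The main obstacle I expect is Step 3 together with the passage from $\mathbb F$-convergence to pointed GH convergence of the unrescaled-in-space slices $(X,\tau_i^{-1/2}d_{t_i},x)$. This requires checking that the base point $x$ stays localized, which is exactly what \eqref{8-moment} provides. It also requires ruling out pieces of $X$ that escape to infinity without appearing in $Z$, which I would handle with the Gaussian bounds of Theorem \ref{hke2}.
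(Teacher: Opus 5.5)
Your architecture for (1), (2), (4) and (5) matches the paper's: Bamler's compactness and the monotonicity of $\mathcal N_{\mu^{(x)}}$ give a metric soliton of entropy $\Theta(x)$, and \cite{HZ26} supplies the analytic structure. Step 3, however, has a genuine gap.

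$\mathbb F$-convergence only controls the measured slices $(X,\tau_i^{-1/2}d_{t_i},\mu^{(x)}_{t_i})$ in a Gromov--$W_1$ sense. It does not see regions of small $\mu^{(x)}_{t_i}$-mass at bounded distance from $x$. So pointed Gromov--Hausdorff convergence of $(X,\tau_i^{-1/2}d_{t_i},x)$ does not follow from it, and smooth convergence on $\mathcal R_Z$ does not help. The danger is not mass escaping to infinity, but thin regions at bounded distance.

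Your proposed remedy uses \eqref{noncollapse2}, which is not scale invariant. Since $\theta>2n$, a $\tau_i^{-1}g(t_i)$-ball of radius $\tilde r$ only receives volume $\gtrsim\tau_i^{\theta/2-n}\tilde r^{\theta}\to0$. What would make your route work is the scale-invariant noncollapsing of \cite[Theorem 6.1]{Bam20} under the type-I scalar bound (as in \eqref{volume-noncollapse}), combined with the Gaussian lower bound \eqref{8-gausssing}. Together they give every rescaled $\tilde r$-ball centered within distance $D$ of $x$ mass at least $c(D)\tilde r^{2n}$. Only with such a bound does $W_1$-closeness upgrade to Hausdorff closeness of balls.

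The paper instead obtains the pointed GH limit, with smooth convergence on the regular part, from Bamler's dilation compactness for bounded scalar curvature \cite[Theorem 1.2]{Bam18} in the form of \cite{Hall23}. It then identifies this limit with the $\mathbb F$-limit of (1) via the conjugate-kernel convergence of \cite[Section 5]{Hall23} and the uniqueness in Lemma \ref{localization}. Your estimate $d_{t_i}(x,p_i)\le(C_3+\sqrt{H_m})\sqrt{\tau_i}$ is the paper's. But transferring it to $Z$ needs this GH convergence, so that $p_i\to p_\infty$, and lower semicontinuity, to see that $p_\infty$ is an $H_m$-center of $\mu^\infty_{-1}$.

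Three further steps are misjustified or unchecked.

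First, completeness of the flow of $J_Z\nabla f_Z$ on the incomplete regular part does not follow from self-similarity, which concerns the flow of $\nabla f_Z$. A Killing field on an incomplete manifold can run into the boundary in finite time. Your local argument that $J_Z\nabla f_Z$ is holomorphic and Killing, via $J_Z$-invariance of $\nabla^2 f_Z$, is fine. Completeness, though, is the content of \cite[Theorem 1.3]{HJ23}, which the paper cites.

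Second, Setting (I) of \cite{HZ26} concerns limits based at genuine conjugate heat kernels of smooth flows, whereas $\mu^{(x)}$ is a singular-time flow. The paper bridges this by taking $t_i'=T-\varepsilon_i\tau_i$ with $\varepsilon_i\to0$ and an $H_m$-center $p_i$ of $\mu^{(x)}_{t_i'}$. $H_m$-concentration and \eqref{8-W1mono} then give $d^{(l)}_{W_1}(\mu^{(x)}_l,\nu_{p_i,t_i';l})\le\sqrt{H_m\varepsilon_i\tau_i}$, so the $\mathbb F$-limit is unchanged. Your list of hypotheses (entropy bound, type-I scalar bound, K\"ahler) omits this step.

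Third, your $\varepsilon$-regularity argument for $\Theta(x)=0\Rightarrow x\in X^{\circ}$ is a legitimate alternative to the paper's appeal to \cite[Theorem 1.1]{Hall23}. You must first transfer $\Theta(x)=0$ to pointed entropies $\mathcal N_{x,t}(t-T+\tau_0)\ge-\varepsilon$ for $t$ near $T$. This follows from the $C^\infty_{loc}$ convergence in Lemma \ref{localization}(a). The uniform convergence $d_t\to d_T$ of Theorem \ref{thmB} then turns the resulting parabolic balls into a fixed neighborhood of $x$ on which the curvature stays bounded up to $T$.
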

\begin{proof}
$(1)$ By Lemma \ref{localization}, we have $\mu^{(x)}\in\mathcal{M}_T$. Moreover, the rescaled flows (\ref{8-rescaled}), with base flows $\mu^{(x)}_{T+\tau_is}$, coincide precisely with the parabolic rescalings considered in \cite[Theorem 2.37]{Bam20c}, where they are denoted by
$$\big(M,(\tau_i^{-1}g_{\tau_it+T})_{t},(\nu_{x,T;\tau_it+T})_{t}\big).$$
Recall that the pointed Nash entropy
$$\tau\mapsto\mathcal{N}_{\mu^{(x)}}(\tau)$$
is nonincreasing. Hence
$$\lim_{\tau\to0}\mathcal{N}_{\mu^{(x)}}(\tau)$$
exists. By Bamler \cite[Theorem 2.18, Theorem 2.37]{Bam20c}, for any $\tau_i\to0$, there exists a subsequence of the rescaled flows which $\mathbb{F}$-converges on compact time intervals to a metric soliton with Nash entropy $\Theta(x).$ The lower bound
$$\Theta(x)\geq\nu[g_0,T]$$
is precisely (\ref{nashentropy-2side}).

$(2)$ A metric soliton obtained as a noncollapsed $\mathbb{F}$-limit of smooth Ricci flows is described by \cite[Theorem 2.18]{Bam20c}. Its time slices are the rescalings $|s|^{1/2}d_Z$ of a fixed singular space $(Z,d_Z)$, whose singular set has Minkowski dimension at most $2n-4$ by \cite[Theorem 2.18 and Theorem 2.16(d)]{Bam20c}. On the regular part, the limit of the conjugate heat kernel measures is
$$(4\pi)^{-n}e^{-f_Z}dg_Z,$$
with
$$Ric+\nabla^2 f_Z=\frac12 g_Z.$$
As the rescaled flows (\ref{8-rescaled}) are K\"ahler with parallel complex structures, their complex structures converge smoothly on the regular part, together with the metrics. Hence the limit is K\"ahler. The holomorphicity and Killing property of $J_Z\nabla f_Z$, together with the completeness of its flow on $\mathcal{R}_Z$, are established in Hallgren-Jian \cite[Theorem 1.3]{HJ23}.

$(3)$ We use the fixed-base-point dilation compactness theorem \cite[Theorem 1.2]{Bam18}, in the form recalled in \cite[Theorem 1.1 and Section 5]{Hall23}. The scalar type-I and finite-scale entropy bounds give, after a further subsequence, pointed Gromov--Hausdorff convergence of $(X,\tau_i^{-1/2}d_{t_i},x)$ to a singular shrinking space, with smooth convergence on its regular part. The convergence of the rescaled conjugate heat kernels in \cite[Section 5]{Hall23}, together with their uniqueness from Lemma \ref{localization}, identifies this measured shrinking limit with the model $(Z,d_Z)$ of part $(1)$. 

If $p_i$ is an $H_m$-center of $\mu^{(x)}$ at $t_i$, then
$$d_{t_i}(x,p_i)\leq\int d_{t_i}(x,y)d\mu^{(x)}_{t_i}(y)
+\int d_{t_i}(y,p_i)d\mu^{(x)}_{t_i}(y)
\leq(C_3+\sqrt{H_m})\sqrt{\tau_i}.$$
After rescaling, the points $p_i$ remain at bounded distance from $x$. Passing to a further subsequence, they converge to a point $p_\infty\in Z$. The center inequality and lower semicontinuity give
$$\int_Z d_Z(p_\infty,y)^2d\mu^\infty_{-1}(y)\leq H_m.$$
Thus $p_\infty$ is an $H_m$-center of $\mu^\infty_{-1}$, and
$$d_Z(z_\infty,p_\infty)\leq C_3+\sqrt{H_m}.$$
Smooth convergence on the regular part is included in the cited dilation compactness and heat-kernel convergence results.

$(4)$ To apply \cite[Setting (I), Section 2.2]{HZ26}, we approximate by flows defined through a regular terminal time. Choose $\varepsilon_i\downarrow0$, set $t_i'=T-\varepsilon_i\tau_i$, and choose an $H_m$-center $p_i$ of $\mu^{(x)}$ at $t_i'$. Concentration and \eqref{8-W1mono} give, for every $l\leq t_i'$,
$$d^{(l)}_{W_1}\big(\mu^{(x)}_l,\nu_{p_i,t_i';l}\big)
\leq\sqrt{H_m\varepsilon_i\tau_i}.$$
Translate the regular pole time to zero and define
$$\widetilde g_i(u):=\tau_i^{-1}g(t_i'+\tau_i u)=g_i(u-\varepsilon_i),
\qquad -t_i'/\tau_i\leq u\leq0,$$
with genuine conjugate heat kernels $\nu_{p_i,t_i';t_i'+\tau_i u}$. Their rescaled $W_1$-distance from $\mu^{(x)}_{t_i'+\tau_i u}$ is at most $\sqrt{H_m\varepsilon_i}$. Vanishing time translations and this uniform approximation on compact negative-time intervals give the same $\mathbb F$-limit, by Bamler's theory \cite{Bam20b,Bam20c}.

The genuine terminal kernels have a uniform Nash-entropy lower bound at scale one in the rescaled flows, directly from Perelman's finite-scale bound. Their ancient domains exhaust $(-\infty,0]$, and part $(1)$ identifies their limit as a metric soliton. Thus the precise hypotheses of setting (I) are satisfied. Theorems A, B, and C of \cite{HZ26}, version 2, give the asserted analytic, log-terminal, tangent-cone, orbifold, and polarized Fano fibration structure, as well as simple connectedness.

$(5)$ By Lemma \ref{localization}, for every \(x\in X\) the conjugate heat flow
\(\mu^{(x)}\in\mathcal M_T\) is uniquely determined. By
\cite[Theorem 2.37]{Bam20c}, the pointed Nash entropy
\[
\tau\longmapsto \mathcal N_{\mu^{(x)}}(\tau)
\]
is nonincreasing in \(\tau\), and by \eqref{nashentropy-2side} it is bounded
below by \(\nu[g_0,T]\). Hence the limit
\[
\Theta(x):=\lim_{\tau\to0}\mathcal N_{\mu^{(x)}}(\tau)
\]
exists and is independent of the choice of subsequence.

To compare with the heat kernel density in \cite{Hall23}, write
$d\mu^{(x)}_{T-\tau}=(4\pi\tau)^{-n}e^{-f_\tau}dg_{T-\tau}$ and set
$$\mathcal W_x(\tau):=\mathcal W(g(T-\tau),f_\tau,\tau).$$
Perelman's entropy identity gives
$$\frac{d}{d\tau}\big(\tau\mathcal N_{\mu^{(x)}}(\tau)\big)
=\mathcal W_x(\tau).$$
Passing to the limit from smooth terminal kernels in Perelman's entropy bounds gives
$$\nu[g_0,T]\leq\mathcal W_x(\tau)\leq0,\qquad 0<\tau<T.$$
The Nash entropy is bounded near zero, so integration from zero yields
$$\mathcal N_{\mu^{(x)}}(\tau)
=\frac{1}{\tau}\int_0^\tau\mathcal W_x(a)\,da.$$
Since $\mathcal W_x$ is monotone and bounded, its limit at zero is therefore
$\Theta(x)$. By the uniqueness in Lemma \ref{localization}, the infimum
over singular-time kernels in the definition of the heat kernel density
in \cite[Section 3]{Hall23} is taken over the single kernel $\mu^{(x)}$.
Thus \cite[Theorem 1.1]{Hall23} identifies \(\Theta(x)\) with the shrinker
entropy of every Type-I blow-up limit at \(x\). Consequently all Type-I blow-up limits at \(x\)
have the same entropy, namely \(\Theta(x)\). By the rigidity of
Perelman's entropy, see \cite[Theorem 1.1]{Hall23} and
\cite[Theorem 2.37]{Bam20c}, we have
\begin{equation}\label{eq:ThetaGaussian}
\Theta(x)=0
\quad\Longleftrightarrow\quad
Z \text{ is the Gaussian shrinker on } \mathbb C^n,
\end{equation}
and in this case the curvature of \(\omega(t)\) is bounded near \(x\) up
to time \(T\).

It remains to identify the points at which the curvature is bounded up to
the singular time. By \cite[Theorem 4.1]{CT15}, the metrics \(\omega(t)\)
converge in \(C^\infty_{\mathrm{loc}}(X^\circ)\) to the K\"ahler metric
\(\omega_T\) on
\[
X^\circ=X\setminus\operatorname{Null}(\alpha).
\]
Hence the curvature is bounded near \(x\) up to time \(T\) whenever
\(x\in X^\circ\). Conversely, if the curvature is bounded near \(x\) up
to time \(T\), then parabolic regularity shows that the flow extends
smoothly to \(t=T\) in a neighborhood of \(x\), so the metric has a uniform positive lower bound on a neighborhood of $x$. If $x$ belonged to the null locus, an analytic subvariety $W$ through $x$ with $\int_W\alpha^{\dim W}=0$ would contradict the uniform positive volume of $W$ on that neighborhood, exactly as in the proof of Proposition \ref{propY}$(a)$. Hence $x\in X^\circ$; see also \cite[Proposition 2.3]{LZ26}. Thus
\begin{equation}\label{eq:curvX0}
x\in X^\circ
\quad\Longleftrightarrow\quad
\text{the curvature is bounded near }x\text{ up to time }T.
\end{equation}
Conversely, if $x\in X^\circ$, smooth extension near $x$ implies that the pointed parabolic blow-ups converge to the flat flow on $T_xX\cong\mathbb C^n$, so $Z$ is the Gaussian shrinker. Combining this observation with \eqref{eq:ThetaGaussian} and \eqref{eq:curvX0} yields
\[
\Theta(x)=0
\quad\Longleftrightarrow\quad
Z \text{ is the Gaussian shrinker on } \mathbb C^n
\quad\Longleftrightarrow\quad
x\in X^\circ .
\]
\end{proof}

\begin{remark}\label{remarkA}
$(1)$ For tangent flows at a fixed base point, Theorem \ref{thmA} establishes the algebraic part of \cite[Conjecture 2.2]{JST23}: the time slices admit a polarized Fano fibration in the sense of Sun--Zhang \cite{SZ24}. The singular counterpart of this structure was proved by Hallgren--Zhang \cite{HZ26}. In the tangent-flow case, the soliton property is a
consequence of the monotonicity of the Nash entropy \cite[Theorem 2.37]{Bam20c}. For blow-ups along moving base points, the soliton property requires an additional entropy-pinching condition; see Proposition \ref{propC} below.

$(2)$ The complex analytic structure across the singular set of $Z$ is not contained in the results of \cite{Bam20c,HJ23,SZ24} alone. Indeed, \cite{Bam20c} describes the metric structure of the singular space, \cite{HJ23} establishes the K\"ahler and holomorphic-Killing properties of the soliton on the regular part, and \cite{SZ24} treats smooth K\"ahler--Ricci shrinkers. The required ingredient is supplied by \cite[Theorems A, B, C]{HZ26}, which show that $Z$ is a normal complex analytic variety with log terminal singularities, that its analytic singular set coincides with the metric singular set, and that $\omega_Z$ extends to a K\"ahler current with locally bounded potentials. The proof uses H\"ormander's $L^2$-estimate on the regular part of the shrinker and then extends the resulting holomorphic and plurisubharmonic objects across the singular set, following the strategy of Liu--Sz\'ekelyhidi \cite{LG22,LG19}. In this argument the conjugate heat kernel measure of the shrinker plays the role that heat kernel estimates play in the smooth
compact setting. In the approach announced in \cite[Section 1]{JS26}, this ingredient is to be supplied instead by the local partial $C^0$-estimates along the flow developed in \cite[Section 8]{JST23} and \cite{HJST23}.

$(3)$ The shrinker $Z$ obtained in Theorem \ref{thmA} may a priori depend on the subsequence and on the choice of the sequence $t_i\nearrow T$. Only the limiting Nash entropy $\Theta(x)$ is independent of these choices; see Theorem \ref{thmA}(5). Uniqueness of the tangent flow is known when the flow admits a holomorphic approximation by an asymptotically conical K\"ahler--Ricci shrinker, in which case the tangent flow is unique and coincides with that shrinker; see \cite[Theorem A]{CHL26}. In general, uniqueness is part of the conjectural picture in
\cite[Conjecture 6.8 and Remark 6.9]{SZ24}. The related
\cite[Conjecture 1.8]{CHL26} characterizes holomorphic approximation by a
shrinker in terms of the associated Fano fibration germs.
\end{remark}

\subsection{Blow-ups along moving base points}\label{sec8.6}
Let now $(x_i,t_i)\in X\times[0,T)$ be arbitrary with $t_i\nearrow T,$ $\tau_i:=T-t_i,$ and consider the rescaled flows (\ref{8-rescaled}) with the base flows $\mu^{(i)}:=\mu^{(x_i)}$ of Lemma \ref{localization}. By \cite[Theorem 2.14]{Bam20c}, a subsequence of the rescaled flows \eqref{8-rescaled} $\mathbb{F}$-converges on compact subintervals of $(-\infty,0)$ to a metric flow $(\mathcal{X}^{\infty},(\mu^{\infty}_s)_{s<0}).$

\begin{proposition}{\rm(Moving base points)}\label{propC}
Under the assumption of Theorem {\rm\ref{localdistortion}}, the following hold for the chosen limit.
\begin{itemize}
\item[$(a)$] The limit is a noncollapsed ancient metric flow whose regular spacetime carries a smooth K\"ahler-Ricci flow with $|R|\leq C_0'/|s|$ for $s<0$.
\item[$(b)$] For the chosen subsequence, the measured metric-flow pair $(\mathcal{X}^{\infty},\mu^{\infty})$ is a metric soliton if and only if
\begin{equation}\label{8-pinch}
\mathcal{N}_{\mu^{(i)}}(\tau_i/K)-\mathcal{N}_{\mu^{(i)}}(K\tau_i)\longrightarrow0~~~~\text{for every}~K>1.
\end{equation}
In this case the conclusions $(2)$ and $(4)$ of {\rm Theorem \ref{thmA}} hold for its time slices.
\item[$(c)$] Suppose a future completion is specified and the hypotheses of {\rm \cite[Theorems 2.6, 2.11, and 2.14]{Bam20c}} hold at the point under consideration. Then each tangent flow is a metric soliton with the conclusions of parts $(2)$ and $(4)$ of {\rm Theorem \ref{thmA}}. These conclusions also hold for tangent flows at infinity, using the entropy lower bound at all finite scales and {\rm \cite[Theorem 2.40]{Bam20c}}.
\end{itemize}
\end{proposition}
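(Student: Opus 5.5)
The plan is to run everything through Bamler's compactness and soliton theory, using the uniform inputs already established: the rescaled scalar bound \eqref{8-rescaledR}, the Nash entropy bound \eqref{nashentropy-2side}, and the localization of Lemma \ref{localization}.

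For $(a)$, recall that the rescaled flows \eqref{8-rescaled} satisfy two uniform bounds. The first is $|R_{g_i}|\le C_0'/|s|$. The second is $\mathcal{N}\ge\nu[g_0,T]$ at every base point and every scale up to $T/\tau_i\to\infty$. The entropy bound makes the $\mathbb{F}$-limit given by \cite[Theorem 2.14]{Bam20c} noncollapsed, so the structure theory \cite[Theorems 2.16, 2.18]{Bam20c} applies. It gives a regular spacetime $\mathcal{R}$ on which the convergence is smooth, and whose complement has codimension at least four. On $\mathcal{R}$ the scalar bound passes to the limit by smooth convergence, giving $|R|\le C_0'/|s|$. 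The metric flow is ancient because $T/\tau_i\to\infty$. The complex structures are parallel and uniformly bounded, so they converge smoothly on $\mathcal{R}$, exactly as in the proof of Theorem \ref{thmA}$(2)$, and the limit flow is therefore Kähler–Ricci.

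For $(b)$, the key input is the entropy characterization of metric solitons. A noncollapsed $\mathbb{F}$-limit pair $(\mathcal{X}^\infty,\mu^\infty)$ is a metric soliton if and only if its pointed Nash entropy $\mathcal{N}_{\mu^\infty}(\tau)$ is constant in $\tau$. This follows from \cite[Theorem 2.37 and its proof]{Bam20c}, where the Nash entropy is shown to be continuous under $\mathbb{F}$-convergence and constancy is shown to force the soliton identity on $\mathcal{R}$. Scaling gives $\mathcal{N}_{\mu^{(i)}}(\tau\tau_i)\to\mathcal{N}_{\mu^\infty}(\tau)$ for each $\tau>0$. Two points need care here. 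The first is convergence of $\mathcal{N}$ for conjugate heat flows based at the singular time rather than at a regular point. I would handle this with the same approximation by genuine kernels $\nu_{p_i,t_i';\cdot}$ used in the proof of Theorem \ref{thmA}$(4)$, whose $W_1$ error is $\sqrt{H_m\varepsilon_i}$. The second is uniform integrability, which comes from the Gaussian bounds of Lemma \ref{localization}$(b)$. With these in hand, \eqref{8-pinch} is equivalent to $\mathcal{N}_{\mu^\infty}(1/K)=\mathcal{N}_{\mu^\infty}(K)$ for all $K>1$. Since $\mathcal{N}_{\mu^\infty}$ is monotone, this is equivalent to constancy of $\mathcal{N}_{\mu^\infty}$, and hence to the soliton property. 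Once the limit is a soliton, conclusion $(2)$ of Theorem \ref{thmA} follows verbatim, since its proof used only noncollapsing, the soliton property, the Kähler structure, and \cite{HJ23}. For conclusion $(4)$ I would repeat the approximation argument of Theorem \ref{thmA}$(4)$. The kernels $\nu_{p_i,t_i'}$ with $t_i'=T-\varepsilon_i\tau_i$ and $p_i$ an $H_m$-center of $\mu^{(x_i)}$ at $t_i'$ realize setting (I) of \cite{HZ26}, and Theorems A–C of \cite{HZ26} then apply.

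For $(c)$, I would follow the pattern of the fixed-base-point case. Bamler's tangent flow theorems \cite[Theorems 2.6, 2.11, 2.14]{Bam20c} show that at a point of a future completion the Nash entropy $\mathcal{N}(\tau)$ has a limit as $\tau\to0$. That limit forces the pinching \eqref{8-pinch} along any blow-up sequence, so $(b)$ applies. For tangent flows at infinity, the limit as $\tau\to\infty$ exists by monotonicity together with the lower bound at all finite scales, and \cite[Theorem 2.40]{Bam20c} supplies the soliton property; $(b)$ then again gives $(2)$ and $(4)$. I expect the main obstacle to be the continuity of Nash entropy under $\mathbb{F}$-convergence for singular-time based flows and moving base points, that is, the passage $\mathcal{N}_{\mu^{(i)}}(\tau\tau_i)\to\mathcal{N}_{\mu^\infty}(\tau)$. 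I would prove it by the $W_1$ approximation together with the Gaussian tails of Lemma \ref{localization}.
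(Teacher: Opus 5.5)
Your proposal is correct and follows the paper's proof. Part (a) comes from Bamler's regular-spacetime structure together with the rescaled bound \eqref{8-rescaledR}. Part (b) comes from entropy convergence, monotonicity of $\mathcal{N}$ and Bamler's constant-entropy criterion for metric solitons, with the genuine-kernel approximation of Theorem~\ref{thmA}$(4)$ supplying the analytic conclusions. Part (c) comes from Bamler's tangent-flow theorems and his Theorem 2.40.

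There are two minor differences. First, you justify the entropy convergence for the singular-time base flows $\mu^{(x_i)}$ yourself, via the $W_1$ approximation and Gaussian tails, whereas the paper takes it directly from Bamler's Theorem 2.14. Second, in (c) you rederive the soliton property through the pinching criterion rather than citing the general tangent-flow theorems. Strictly, that step needs a diagonal sequence with genuine kernels, since (b) is formulated only for the base flows $\mu^{(x_i)}$.
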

\begin{proof}
$(a)$ follows from the regular-spacetime structure and smooth convergence in \cite{Bam20b,Bam20c}; the complex structures are parallel and pass to the regular limit. The scalar estimate follows from \eqref{8-rescaledR}. For $(b)$, entropy convergence in \cite[Theorem 2.14]{Bam20c} gives
$$\mathcal N_{\mu^\infty}(a)=\lim_i\mathcal N_{\mu^{(i)}}(\tau_i a),\qquad a>0.$$
If \eqref{8-pinch} holds, any two positive scales lie in $[1/K,K]$ for some $K>1$. Monotonicity bounds their limiting entropy difference by the drop in \eqref{8-pinch}, so the difference is zero. The limiting entropy is constant, and \cite[Theorem 2.18]{Bam20c} makes the measured limit a metric soliton. Conversely, constancy and entropy convergence give \eqref{8-pinch}. The genuine-kernel approximation in Theorem \ref{thmA}$(4)$ applies with $x_i$ in place of $x$ and yields the analytic conclusions.

For $(c)$, use the tangent-flow theorems for general noncollapsed metric-flow limits cited in the statement, after specifying the future completion and checking their pointwise hypotheses. The original compact-flow boundary theorem, \cite[Theorem 2.37]{Bam20c}, is not substituted for these general statements. At infinity, the entropy lower bound at every fixed finite scale follows from the initial finite-scale entropy bound before rescaling; \cite[Theorem 2.40]{Bam20c} applies. The same genuine-kernel approximation puts the resulting solitons into the setting of \cite{HZ26}.
\end{proof}

\begin{remark}\label{openremark}
The statement that every Type I blow-up limit along an arbitrary sequence $(x_i,t_i)$ is the self-similar flow of a K\"ahler-Ricci soliton does not follow from the above. Condition \rm{(\ref{8-pinch})} concerns the pair (flow, base measure). For illustration, suppose a compact flow admits a tangent flow given by the Feldman--Ilmanen--Knopf shrinker, and consider a moving sequence whose convergence maps identify the limiting conjugate heat kernel measure with a boundary measure of that shrinker not based at the vertex. The underlying metric flow may then be self-similar while the selected conjugate heat kernel measure has nonconstant Nash entropy. In particular, prescribing the rescaled distance of the base points from the exceptional divisor does not by itself determine this identification.

The weaker statement that the limit flow is self-similar would follow if one could always find $y_i$ with
$$d_{t_i}(x_i,y_i)\leq C\sqrt{\tau_i}$$
such that (\ref{8-pinch}) holds for $\mu^{(y_i)}$. The counting of bad dyadic scales in \cite[Section 7]{JS26} bounds the number of bad scales of a given point, but it does not produce a good point at a prescribed scale.

The question of whether the blow-up limits based at Ricci vertices are tangent flows is raised in \cite[Section 1]{HJST23} and \cite[Conjecture 2.2, Remark 2.3]{JST23}. For a limit that is not self-similar, one available route to a normal complex analytic structure is the partial \(C^0\)-estimate of Jian-Song-Tian \cite[Section 8]{JST23} and Hallgren \cite{HJST23}, since Hallgren-Zhang \cite{HZ26} uses the shrinker structure in an essential way. \qed
\end{remark}

\begin{remark}\label{comparisonremark}
We summarize the relation of this section to \cite{JS26} and \cite{LZ26}. The forthcoming part II of Jian-Song \cite{JS26} announces diameter estimates, the identification of the Gromov-Hausdorff limit with the K\"ahler space induced by the limiting class, partial $C^0$-estimates, and the soliton and normal analytic structure of type I blow-up limits, by means of the global Ricci potential of \cite{JS26} and the techniques of \cite{JST23,HJST23}. Li--Zhang \cite{LZ26} prove the uniqueness of the singular-time Gromov--Hausdorff limit, identify its regular part with \(X^{\circ}\), and establish the codimension-four estimate for its singular set. Their proof uses the spacetime completion of Fang--Li and the quantitative curvature radius estimates of \cite{FL25}. Their later sections also study rescaled geometry and Ricci shrinkers. In the present section, the uniqueness of the limit (Theorem \ref{thmB}) is instead obtained from Theorem \ref{hke2} and the monotonicity of the $W_1$-distance, with the uniform convergence of the distance functions as a by-product. Proposition \ref{propY} identifies the regular part of the limit by combining local metric convergence with the terminal-time spacetime argument of Li--Zhang \cite{LZ26}; the identification is then completed by the null-volume argument of Collins--Tosatti \cite{CT15}. This reduces the identification of the limit with $(Y,\bar d)$ to the fibre-collapsing condition \eqref{8-fibre}. Once Lemma \ref{localization} is available, the soliton and normal analytic structure of the blow-up limits at a fixed base point (Theorem \ref{thmA}) follow from \cite{Bam20c,JST23,HJ23,HZ26}.
\end{remark}



\end{document}